\tikzset{>=latex}
\newcolumntype{L}{>{\arraybackslash}X}
\theoremstyle{plain}
\newtheorem{theorem}{Theorem}[section]
\theoremstyle{remark}
\newtheorem{remark}[theorem]{Remark}
\theoremstyle{plain}
\newtheorem{proposition}[theorem]{Proposition}
\newtheorem{definition}[theorem]{Definition}
\newtheorem{assumption}[theorem]{Assumption}
\numberwithin{equation}{section}
\def\N{{\mathbb N}}
\def\Z{{\mathbb Z}}
\def\Q{{\mathbb Q}}
\def\R{{\mathbb R}}
\newcommand{\E}{{\mathbb E}}
\renewcommand{\P}{{\mathbb P}}
\newcommand{\F}{{\mathscr F}}
\newcommand{\g}{\gamma}
\newcommand{\om}{\omega}
\renewcommand{\O}{\Omega}
\newcommand{\I}{I}
\newcommand{\Tor}{\mathbb{T}}
\newcommand{\T}{\mathbb{T}}
\newcommand{\A}{{\mathcal A}}
\newcommand{\loc}{{\rm loc}}
\newcommand{\tr}{\mathrm{tr}}
\newcommand{\id}{\mathrm{Id}}
\newcommand{\diam}{\mathrm{diam}}
\newcommand{\wt}{\widetilde}
\newcommand{\Borel}{\mathcal{B}}
\newcommand{\Progress}{\mathcal{P}}
\newcommand{\embed}{\hookrightarrow}
\newcommand{\one}{{{\bf 1}}}
\newcommand{\lb}{\langle}
\newcommand{\rb}{\rangle}
\newcommand{\supp}{\mathrm{supp}\,}
\renewcommand{\emptyset}{\varnothing}
\newcommand{\vp}{\varphi}
\newcommand{\Lip}{\mathrm{Lip}}
\newcommand{\dd}{\mathrm{d}}
\newcommand{\data}{\mathcal{D}}
\renewcommand{\square}{Q}
\renewcommand{\tilde}{\wt}
\newcommand{\ellip}{\nu}
\renewcommand{\S}{\rotatebox[origin=c]{90}{\tikz{\draw[thick](0,0)--(0.25,0)--(0,0.25)--cycle;}}\,}
\newcommand{\invS}{\,\rotatebox[origin=c]{270}{\tikz{\draw[thick](0,0)--(0.25,0)--(0,0.25)--cycle;}}}
\begin{document}

\author{Antonio Agresti}
\address{Department of Mathematics Guido Castelnuovo\\
	Sapienza University of Rome \\ P.le Aldo Moro 5\\ 00185 Rome\\ Italy.} 
\email{antonio.agresti92@gmail.com}

\author{Max Sauerbrey}
\address{Max Planck Institute for Mathematics in the Sciences\\
Inselstr. 22 \\ 04103 Leipzig \\ Germany.} \email{maxsauerbrey97@gmail.com}

\author{Mark Veraar}
\address{Delft Institute of Applied Mathematics\\
Delft University of Technology \\ P.O. Box 5031\\ 2600 GA Delft\\The
Netherlands.} \email{M.C.Veraar@tudelft.nl}

\thanks{The first author is a member of GNAMPA (IN$\delta$AM). The third author has received funding from the VICI subsidy VI.C.212.027 of the Netherlands Organisation for Scientific Research (NWO)}

\date\today

\title[Stochastic flow approach to De Giorgi--Nash--Moser estimates]{A stochastic flow approach to De Giorgi--Nash--Moser estimates for SPDE\lowercase{s} with smooth transport noise}

\keywords{Parabolic equations, De Giorgi--Nash--Moser estimates, transport noise, It\^o--Wentzell formulae, quasilinear SPDEs, regularity theory for SPDEs, stochastic method of characteristics}

\subjclass[2020]{Primary: 60H15, Secondary: 35B65,  35K59,  60H10.}


\begin{abstract}
The celebrated De Giorgi--Nash--Moser theory ensures that solutions to uniformly elliptic or parabolic PDEs are bounded and H\"older continuous, even with merely bounded measurable coefficients. For parabolic SPDEs with transport noise, boundedness has recently been established, but H\"older continuity remains a key open problem in the regularity theory of parabolic SPDEs. In this work, we resolve this question under the assumption that the noise coefficients are sufficiently regular in space.

Our approach relies on Kunita’s stochastic method of characteristics, which allows us to transform the original SPDE---via a stochastic flow of diffeomorphisms---into a random PDE to which the classical De Giorgi--Nash--Moser estimates apply. This program is accomplished through new a-priori estimates for the inverse of stochastic flows of diffeomorphisms, and a novel version of the It\^o--Wentzell formula adapted to rough random fields. To demonstrate the applicability of our results, we establish the existence of global, regular solutions to quasilinear SPDEs with transport noise.
\end{abstract}

\maketitle

\section{Introduction} \label{sec:intro}
The De Giorgi–Nash–Moser estimates are one of the cornerstones of the modern theory of PDEs. They were originally found independently and based on completely different methods by De Giorgi \cite{DeGiorgi_57} and Nash \cite{nash_58}, and were soon after reproved by Moser \cite{moser_60} using yet another approach, based on an iteration technique.
They roughly state that any \emph{weak} solution to a uniformly elliptic or parabolic linear PDE are locally bounded and even \emph{H\"older continuous}. In the absence of source terms in the given PDE and for positive solutions, H\"older continuity can be obtained via Harnack inequalities, see \cite{Kru64,MoHarnack61,MoHarnack64}. 
Parallel to their theoretical value, the De Giorgi--Nash--Moser estimates play a crucial role in interpreting weak solutions as pointwise--and thus, physically meaningful--objects. Since their discovery, the De Giorgi--Nash--Moser estimates have been extended to various other classes of equations as, e.g, quasilinear equations \cite{serrin_QL}, degenerate parabolic equations \cite{CF_porous,chen_degenrate} or merely hypoelliptic PDEs \cite{hypoelliptic_orig}, see also the monographs \cite{dibendetto, GT83, LSU} for a streamlined presentation. One of the most significant consequences of such estimates is that they often lead to \emph{global smooth solutions} of quasilinear parabolic PDEs (see, e.g., \cite[Chapter 15]{TayPDE3}). This is closely related to the original motivation of De Giorgi and Nash, who proved such estimates to solve the 19th Hilbert problem on the regularity of minimizers of variational problems (thus, a quasilinear elliptic PDE). As discussed in more detail below, our primary motivation for studying stochastic variants of the De Giorgi--Nash--Moser estimates lies in their applicability to \emph{quasilinear parabolic SPDEs}, which has not been accomplished so far.

\smallskip

In this manuscript, we establish De Giorgi--Nash--Moser estimates for \emph{SPDEs with transport noise} which apply, among others, to the following SPDE
\begin{equation}
	\label{eq:SPDE_lin_intro}
	\left\{
	\begin{aligned}
			&\dd u \, - \, \nabla \cdot (a\cdot \nabla u) \,\dd t \, = \,\textstyle{\sum}_{n\ge 1} (b_n\cdot \nabla) u \,\dd w^n_t, & & \text{ on }[0,T]\times \Tor^d,\\
		&u(0)\,=\,u_0,& & \text{ on }\Tor^d,
	\end{aligned}
	\right.
\end{equation}
with solely \emph{bounded measurable} diffusion matrix $a$ and \emph{sufficiently regular} noise coefficients $b_n$, as long as \eqref{eq:SPDE_lin_intro} is parabolic: There exists $\ellip>0$ such that almost surely 
\begin{equation}
    \label{eq:stoch_para_intro}
    \textstyle
    \forall t\in [0,T], x\in \Tor^d,  \eta\in \R^d \ \ \text{ it holds that } \ \    \eta^\top \big( a(t,x)-\frac{1}{2} \sum_{n\ge 1}b_n(t,x) \otimes  b_n(t,x) \big) \eta \,\geq \, \ellip | \eta|^2.
\end{equation}
Here, and throughout the manuscript, $\T^d$ denotes the $d$-dimensional torus and the sequence $(w^n)_{n\geq 1}$ consists of independent $\F$-Brownian motions on a probability space $(\Omega,\A , \P )$ equipped with a complete filtration $\F$. 

\smallskip

Although De Giorgi--Nash--Moser estimates for parabolic PDEs have been extensively studied, extending them to stochastic PDEs such as \eqref{eq:SPDE_lin_intro}---in a form robust enough to establish global existence of smooth solutions to quasilinear SPDEs---remains an open problem. In this manuscript, we aim to partially fill this gap by proving suitable versions of De Giorgi--Nash--Moser estimates for \eqref{eq:SPDE_lin_intro} together with an application to a quasilinear SPDE, see Theorems \ref{t:DNM_intro} and \ref{t:global_quasilinear_intro} below. 
Under the assumption \eqref{eq:stoch_para_intro}, regularity of solutions to the SPDE \eqref{eq:SPDE_lin_intro} was already investigated in \cite{DG15_boundedness,DG17_continuity}. In \cite{DG15_boundedness}, by extending the Moser iteration to SPDEs, the authors showed boundedness of solutions to \eqref{eq:SPDE_lin_intro} under the sole condition \eqref{eq:stoch_para_intro} even in the presence of sufficiently integrable source terms. 
This resulted in a refinement of the pioneering work  \cite{LMS}, in which boundedness is shown also for noises with Lipschitz dependence on $(u,\nabla u)$, but where the parabolicity condition \eqref{eq:stoch_para_intro} is replaced by a more restrictive version. 
Following this result, the same authors showed in \cite{DG17_continuity} that this bounded solution is almost surely continuous at each point $(t_0,x_0)$ in the interior of the domain by means of a Harnack inequality. However, in contrast to the deterministic case, whether such a solution is almost surely continuous at any such point\footnote{This does not follow from the previous, as it involves the intersection of uncountably many events of probability one.} or even in some H\"older class is, in general, an open problem. The boundedness result of \cite{DG15_boundedness} was subsequently generalized to quasilinear and degenerate parabolic SPDEs such as stochastic porous media equations in \cite{DG19_supremum_degenerate}. It is worth mentioning a few results on SPDEs where $\sum_{n\geq 1}(b_n\cdot\nabla) u \,\dd{w}^n_t$ is replaced by a lower-order noise of the form $\sum_{n\geq 1} f_n(t,x,u)\,\dd w^n_t$ with $\|(f_n(t,x,u))_{n\geq 1}\|_{\ell^2}\lesssim 1+|u|$ (i.e., $f=(f_n)_{n\geq 1}$ is sublinear in $u$ as an $\ell^2$-valued function), in which case much more can be said. Indeed, as demonstrated in \cite{HWW17,KNP02}, it is possible to reduce to the deterministic De Giorgi--Nash--Moser theory from which then boundedness and H\"older continuity follow. Finally, a Harnack-type inequality in the case of lower-order noise as above was proved in \cite{W18_Harnack}. We also remark that while the H\"older-regularity of solutions to \eqref{eq:SPDE_lin_intro} is currently open, other types of gain in regularity for solutions to coercive SPDEs can be proved rather generically, cf.\ \cite{Agresti_meyer,Auscher_Portal,bechtel_extrapolation_2025}.

The presence of transport noise in \eqref{eq:SPDE_lin_intro} is a central obstruction to establishing De Giorgi–Nash–Moser estimates for parabolic SPDEs. At the same time, there are several compelling reasons to consider transport noise. First, from a scaling perspective (see, e.g., \cite[Subsection 1.2]{AV25_survey}), the transport term $(b_n\cdot\nabla) u \,\dd{w}^n_t$
shares the same local scaling as the diffusive term $\nabla \cdot (a\cdot \nabla u)\,\dd t$, owing to the classical scaling properties of Brownian motions. In particular, transport noise is \emph{critical} from a PDE point of view, and this already suggests why lower-order noise terms are analytically more tractable. Second, starting from Kraichnan's influential works \cite{K68,K94}, transport-type noise is by now a well-established model in stochastic fluid dynamics, where it can effectively model the behavior of turbulent flows advecting passive scalars--e.g., in the case where $u$ represents concentrations, see \cite[Subsection 1.3]{RD_AV23} and \cite{F15_book}.
In this manuscript, we tackle the challenge posed by transport noise by using the \emph{stochastic flow method} in combination with a \emph{new distributional It\^o--Wentzell formula}; see Subsection \ref{ss:proof_strat} for details on our strategy. 
As the result below shows, we are able to cover a large range of regular Kraichnan noise, see \cite[Section 5]{GY_stabilization} for the terminology.

\smallskip

Next, we state a simplified version of our main result, Theorem \ref{t:DeGiorgiNashMoser}. 
A progressively measurable process $u$ with paths in $ L^2(0,T;H^1(\Tor^d))\cap C([0,T];L^2(\Tor^d))$ almost surely is said to be a \emph{weak solution} to \eqref{eq:SPDE_lin_intro} if for all $\phi\in C^\infty(\T^d)$ and almost surely
	\begin{align*}
		\langle
		u(t),  \phi\rangle \,-\, \langle u_0,\phi \rangle  \,=\, &  - \int_0^t \bigl(a\nabla u ,\nabla \phi\bigr)_{L^2(\T^d;\R^d)} \,\dd s \,
+\, \sum_{n\ge 1}\int_0^t \bigl(  b_n \cdot  \nabla  u,\phi \bigr)_{L^2(\T^d)}  \,\dd w_s^n,
\end{align*}
	for all $t\in [0,T]$, where the right-hand side is well-defined if, e.g., for some $M<\infty$, almost surely
    \begin{align}\label{eq:bddness_intro}
       \sum_{i,j=1}^d |a^{ij}(t,x)|+ \sum_{i=1}^d \|(b^i_n(t,x))_{n\geq 1}\|_{\ell^2} \,&\leq\, M ,
    \end{align}
    for all $(t,x)\in [0,T]\times \T^d$. We note that \eqref{eq:stoch_para_intro} and \eqref{eq:bddness_intro} guarantee for any $\F_0$-measurable $u_0:\Omega \to  L^2(\T^d)$ the existence of a unique weak solution $u$ to \eqref{eq:SPDE_lin_intro}, see Proposition \ref{prop:Existence_Uniqueness} for a more general version of this fact. 

Below, we say that $v:(0,T)\times \T^d\to \R$ is H\"older continuous if $v\in C^{\gamma}((0,T)\times \T^d)$ for some $\gamma>0$. 

\begin{theorem}[Stochastic De Giorgi--Nash--Moser estimate -- simplified version]\label{t:DNM_intro}
Let the coefficient fields $a\colon \Omega \times [0,T]\times \T^d \to \R^{d\times d}$ and $b_n \colon  \Omega \times [0,T]\times \T^d \to \R^{d}$ for $n\ge 1$ be progressively measurable and suppose that for some $\gamma_0>0$, $u_0\colon \Omega \to  C^{\gamma_0}(\T^d)$ is strongly $\F_0$-measurable. Suppose that there exist $\delta\in (0,1)$ and $\nu, M, C \in (0,\infty)$ such that \eqref{eq:stoch_para_intro}, \eqref{eq:bddness_intro} and 
\begin{equation}
    \label{eq:additional_assumption_intro}
    \|(b_n)_{n\ge 1}\|_{C^{3+\delta}(\T^d;\ell^2(\R^d)) }\,\le\, C,\qquad t\in [0,T],
\end{equation}
holds almost surely. Then, the weak solution $u$ to \eqref{eq:SPDE_lin_intro} satisfies
\begin{equation}\label{eq:statement_DNM_intro}
\P\big(\,u :(0,T)\times \T^d \to \R \,\text{\normalfont{ is H\"older continuous}}\big)\,=\,1.
\end{equation}
\end{theorem}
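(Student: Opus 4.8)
The plan is to implement the stochastic method of characteristics announced in the introduction. The heuristic is that the transport noise $\sum_n (b_n\cdot\nabla)u\,\dd w^n_t$ can be removed by transporting along the stochastic flow $\phi_t$ of diffeomorphisms generated by the (Stratonovich-corrected) vector fields $(b_n)_{n\geq 1}$. Concretely, I would first construct the stochastic flow $\phi=(\phi_t)_{t\in[0,T]}$ solving $\dd\phi_t(x)=\sum_n b_n(t,\phi_t(x))\circ\dd w^n_t$, together with its inverse $\psi_t=\phi_t^{-1}$. The regularity assumption \eqref{eq:additional_assumption_intro}, namely $(b_n)_{n\ge1}\in C^{3+\delta}(\T^d;\ell^2(\R^d))$ uniformly, is exactly what is needed for Kunita's theory to give a flow of $C^{2+\delta'}$-diffeomorphisms, and—crucially—for the a-priori estimates on $D\psi_t$, $D^2\psi_t$ (the ``new a-priori estimates for the inverse of stochastic flows'' mentioned in the abstract) to hold with moments of every order.

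Next I would apply the distributional It\^o--Wentzell formula (the ``novel version adapted to rough random fields'') to compute the equation satisfied by the transported solution $v(t,y):=u(t,\phi_t(y))$, or equivalently $v(t,y)=u(t,\psi_t^{-1}(y))$ depending on convention. The key point is that $u$ is only a weak (low-regularity, $H^1$-in-space) solution while $\phi_t$ is smooth in space but a semimartingale in time; the standard It\^o--Wentzell formula requires more regularity of $u$, hence the need for the distributional version. The outcome should be that $v$ solves a \emph{random} (no longer stochastic) parabolic PDE of the form $\partial_t v-\nabla\cdot(\tilde a(t,y)\nabla v)=\text{lower order terms}$, where the new diffusion matrix $\tilde a$ is obtained from $a$ by the change of variables—schematically $\tilde a(t,y)=|\det D\psi_t|^{-1}\,(D\psi_t)(a(t,\psi_t^{-1}(y)))(D\psi_t)^\top$ composed appropriately—plus the Stratonovich/It\^o correction coming from the second-order part of the flow. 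The decisive structural fact is that, thanks to the parabolicity condition \eqref{eq:stoch_para_intro}, the It\^o correction from the noise exactly contributes the missing ellipticity, so that $\tilde a$ is uniformly elliptic (with ellipticity constant controlled by $\nu$ and the flow bounds) and bounded measurable for $\P$-almost every fixed $\omega$.

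Having reduced to a pathwise random PDE, I would then invoke the classical parabolic De Giorgi--Nash--Moser theorem (e.g. from \cite{LSU} or \cite{dibendetto}): for $\P$-a.e.\ $\omega$, since $\tilde a(\omega,\cdot,\cdot)$ is bounded measurable and uniformly elliptic on $[0,T]\times\T^d$, and the lower-order coefficients and the transformed initial datum $v_0=u_0\circ\phi_0=u_0$ lie in the right classes (here $u_0\in C^{\gamma_0}$ handles the behaviour near $t=0$, though for interior H\"older continuity on $(0,T)\times\T^d$ one could even do without), the solution $v(\omega,\cdot,\cdot)$ is H\"older continuous on $(0,T)\times\T^d$ with some exponent $\gamma(\omega)>0$. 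Finally, since $\phi_t(\omega,\cdot)$ is a $C^{2+\delta'}$-diffeomorphism for a.e.\ $\omega$ (in particular bi-Lipschitz, jointly continuous in $(t,y)$), H\"older continuity is preserved under the inverse change of variables, so $u(\omega,\cdot,\cdot)=v(\omega,\phi_\cdot^{-1}(\omega,\cdot))$ is itself H\"older continuous on $(0,T)\times\T^d$; taking the intersection over the full-probability sets on which the flow is a nice diffeomorphism and on which the DNM conclusion holds gives \eqref{eq:statement_DNM_intro}.

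The main obstacle is the second step: rigorously justifying the It\^o--Wentzell computation for a weak $H^1$-solution $u$ against a spatially smooth but temporally rough flow, and then verifying that the resulting $\tilde a$ is genuinely uniformly elliptic and that all the lower-order terms produced by the two successive changes of variables (the $D^2\psi$ terms, the drift terms, the Stratonovich corrections) are $\P$-a.s.\ in the function spaces required by the deterministic DNM theory—this is where the quantitative a-priori bounds on $D\psi_t$ and $D^2\psi_t$ and their uniform-in-time control are indispensable. A secondary subtlety is that the DNM H\"older exponent and constant depend on $\omega$ only through $\nu$, $M$, $d$, $T$ and the (a.s.\ finite) flow bounds, which is enough for the qualitative statement \eqref{eq:statement_DNM_intro} but must be tracked if one wants the quantitative version in Theorem \ref{t:DeGiorgiNashMoser}.
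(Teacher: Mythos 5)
Your proposal follows the paper's strategy---transport by a stochastic flow, apply an It\^o--Wentzell formula to reduce to a pathwise random PDE, invoke the deterministic De Giorgi--Nash--Moser estimate, transform back---and for $\P$-a.e.\ fixed $\omega$ the pathwise ellipticity bound you describe does indeed hold, since $D\xi$ and $(D\xi)^{-1}$ are continuous on the compact set $[0,T]\times\T^d$. One small but concrete error: for the transport noise to cancel, the flow must be driven by $-b_n$, not $+b_n$. With your $\dd\phi_t=\sum_n b_n(\phi_t)\circ\dd w^n_t$ and $v=u\circ\phi_t$, the It\^o--Wentzell cross term produces $\sum_n 2\,b_n\cdot\nabla u(\phi_t)\,\dd w^n_t$ rather than cancellation; the paper's $\xi$ in \eqref{eq:eqn_xi_intro} carries the minus sign.

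There is, however, a genuine gap in your final step. You conclude that $u(t,x)=v\bigl(t,\phi_t^{-1}(x)\bigr)$ is H\"older continuous because $\phi_t(\omega,\cdot)$ is a $C^{2+\delta'}$-diffeomorphism, ``in particular bi-Lipschitz, jointly continuous in $(t,y)$.'' This does not suffice: joint continuity of $\phi^{-1}$ yields only a modulus of continuity, not a H\"older exponent, and H\"older regularity is not preserved under taking inverses (consider $t\mapsto t^2$ on $[0,1]$)---so the $C^{1/2-}$-in-time regularity of the forward flow does not transfer to the inverse flow for free. What is actually needed is that $\Psi_t:=\phi_t^{-1}$ is H\"older continuous in $t$ with some positive exponent, uniformly in space; this is precisely the content of Proposition~\ref{Prop_Psi} (and its quantitative version, Proposition~\ref{cor:quant_inverse}), proved by an a-priori fixed-point argument in the spirit of the inverse function theorem, and it is one of the new technical contributions of the manuscript. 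Your invocation of ``new a-priori estimates for the inverse of stochastic flows'' is aimed at $D\psi_t,D^2\psi_t$, but those spatial bounds follow essentially from Kunita's theory via the linear SDE for $(D\xi_t)^{-1}$; the genuinely new and indispensable ingredient is the temporal H\"older bound on $\Psi$ itself, without which your last sentence does not follow.
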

For a more general version of Theorem \ref{t:DNM_intro} accounting also for lower order terms and non-trivial source terms in \eqref{eq:SPDE_lin_intro}, see Theorem \ref{t:DeGiorgiNashMoser}.
It should be remarked that unlike in the well-established deterministic theory concerning \eqref{eq:SPDE_lin_intro} with $b_n=0$ where the H\"older regularity of $u$ depends only on  $\nu$, $M$  and $\gamma_0$ it may very well happen that the $\gamma>0$ for which the above guarantees $u\in C^\gamma((0,T)\times \T^d) $ may degenerate on a set of small probability, see Theorem \ref{t:DeGiorgiNashMoser_quantitative} for a quantified version of this. 
Additional   conditions on  $(b_n)_{n\ge 1}$ under  which we can actually take $\gamma>0$ uniformly in $\omega$ are derived in Proposition \ref{prop:sufficient_uniform_gamma}. The reader is referred to Subsection \ref{ss:open_problems} for a list of open problems related to Theorem \ref{t:DNM_intro}.
A comparison of these results to the existing literature on the De Giorgi--Nash--Moser estimates for SPDEs reviewed above is given in Table \ref{table}.

\begin{table}[h!]
	\centering
	\caption{Existing and our results on the De Giorgi--Nash--Moser estimates for SPDEs}
	\label{table}\begin{tabular}{p{3cm}  p{5cm} p{3.5cm}  p{3cm}}
		\toprule
		{Result} & {Leading Order Operator} & {Type of SPDEs} &  {References} \\
		\midrule
			$L^\infty$\&$C^\gamma$-estimate    &    $\nabla \cdot (a \nabla u)\,\dd t $  &  parabolic &   
		\cite{HWW17,KNP02}  \\
		$L^\infty$-estimate  &   $ \nabla \cdot (a \nabla u)\,\dd t  \,+\, b(u,\nabla u) \,\dd w$   & restricted parabolic & \cite{LMS}  \\
		$L^\infty$-estimate  &  $\nabla \cdot (a \nabla u)\,\dd t  \,+\, b\cdot \nabla u \,\dd w$  &  parabolic   & \cite{DG15_boundedness}  \\
		$L^\infty$-estimate    & $\nabla \cdot (a(u) \nabla u)\,\dd t  \,+\, b(u)\cdot   \nabla u \,\dd w $ &  degenerate parabolic  & \cite{DG19_supremum_degenerate}  \\
		continuity    & $\nabla \cdot (a \nabla u)\,\dd t  \,+\, b\cdot \nabla u \,\dd w$   & parabolic  &    \cite{DG17_continuity}  \\
		$C^\gamma$-estimate  &  $\nabla \cdot (a \nabla u)\,\dd t  \,+\, b\cdot \nabla u \,\dd w$ & parabolic, regular $b$   &current manuscript\\
		\bottomrule
	\end{tabular}
\end{table}

\begin{remark}\label{remark_improved_condition}
The role of the regularity assumption \eqref{eq:additional_assumption_intro} is to guarantee regularity of the stochastic flow of diffeomorphisms induced by the Stratonovich SDE with coefficients $(-b_n)_{n\ge 1}$, see Subsection \ref{ss:proof_strat} for a more detailed discussion. Indeed, \eqref{eq:additional_assumption_intro} guarantees that almost surely
\begin{equation}\label{Eq:alternative_ass}
\textstyle	\|(b_n)_{n\ge 1}\|_{C^{2+\delta}(\T^d;\ell^2(\R^d))} \,+\, \bigl\|\sum_{n\ge 1}(b_n \cdot \nabla) b_n  \bigr\|_{{C^{2+\delta}(\T^d; \R^d) }}\,\le \, C,\qquad t\in [0,T],
\end{equation}
up to enlarging $C$, which then allows us to argue that the said Stratonovich SDE generates a stochastic flow of $C^2$-diffeomorphisms. As a consequence, the results stated in this introduction and Sections \ref{app:DeGiorgi_Nash_Moser}--\ref{s:application_to_quasi} remain valid when replacing \eqref{eq:additional_assumption_intro} by \eqref{Eq:alternative_ass}.
\end{remark}

Despite our statement on H\"older continuity of the solution $u$ to \eqref{eq:SPDE_lin_intro} being fairly weak, it turns out sufficient to establish the existence of regular solutions to \emph{quasilinear, parabolic SPDEs}, as we will demonstrate in our forthcoming work \cite{ASV25_quasi}. As a proof of concept, we consider in Section \ref{s:application_to_quasi} the prototypical SPDE:
\begin{equation}
	\label{eq:SPDE_quasi_intro}
	\begin{cases}
		\displaystyle{
			\dd U \, - \, \nabla \cdot (A(U)\cdot \nabla U) \,\dd t \, =  \textstyle{\sum}_{n\ge 1}\bigl(B_n\cdot \nabla\bigr) U \,\dd w^n_t,} &\ \ \ \text{ on }[0,\infty)\times \Tor^d,\\
		U(0)\,=\,U_0,&\ \ \  \text{ on }\Tor^d.
	\end{cases}
\end{equation}
While weaker solution concepts, as entropy and kinetic formulations, have recently been proven very successful in establishing well-posedness of even degenerate parabolic quasilinear SPDEs with nonlinear gradient noise, see, e.g.,  \cite{DGG19, DG20,FG_ARMA,RSZ24_MATH}, the question whether the regularity of solutions improves subject to more restrictive conditions on the coefficients is unknown. For lower-order noise terms, this was achieved in \cite{DMH_regularity}, where a bootstrap argument allowed the authors to deduce smoothness in space of the unique kinetic solution constructed in \cite{DHV16}. To deal with the transport noise in \eqref{eq:SPDE_quasi_intro} we take a slightly different approach: We start with a local regular solution $U$ to \eqref{eq:SPDE_lin_intro}, which can be constructed based on earlier results \cite{AV19_QSEE1,AV19_QSEE2,AV21_SMR_torus} by the first and last authors, and show that $U$ does not blow up in finite time using Theorem \ref{t:DNM_intro}. This leads to Theorem \ref{t:global_quasilinear}, a special case of which can be stated as follows.

\begin{theorem}[Global regular solutions to quasilinear SPDEs -- simplified version]
\label{t:global_quasilinear_intro} Let  $A:\R\to \R^{d\times d}$ be bounded and locally Lipschitz continuous, and for some $\g_0>0$,  $U_0 \colon \Omega \to C^{\g_0}(\T^d) $  be strongly $\F_0$-measurable.
Suppose that there exist $\delta\in (0,1)$ and $\nu>0$   such that 
\begin{equation*}
\forall  {y }\in \R,  x\in \T^d,  {\eta} \in \R^d \quad \text{ it holds that } \quad 
    \textstyle
    A( {y}) {\eta}\cdot {\eta}- \frac{1}{2}\sum_{n\geq 1} |B_n(x) \cdot  {\eta }|^2\,\geq\, \nu | {\eta}|^2.
    \end{equation*}
    as well as 
    $(B_{n})_{n\geq 1}\in C^{3+\delta}(\T^d;\ell^2(\R^d))$.
Then there exists a unique solution $U$ to \eqref{eq:SPDE_quasi_intro} and the latter satisfies 
$$
U\in C_{\loc}^{1/2-,1-}((0,\infty)\times \T^d) \ \text{almost surely}. 
$$
\end{theorem}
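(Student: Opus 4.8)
The plan is to combine the local well-posedness theory for quasilinear stochastic evolution equations with the stochastic De Giorgi--Nash--Moser estimate of Theorem~\ref{t:DNM_intro} in a continuation argument. First I would invoke the $L^p$-maximal regularity framework of \cite{AV19_QSEE1,AV19_QSEE2,AV21_SMR_torus}: since $A$ is bounded and locally Lipschitz, $(B_n)_{n\ge1}\in C^{3+\delta}(\T^d;\ell^2(\R^d))$, the parabolicity condition holds and $U_0\in C^{\gamma_0}(\T^d)$, this yields a unique maximal local solution $U$ on a stochastic interval $[0,\sigma)$, with instantaneous parabolic smoothing for $t>0$, a blow-up criterion of the form ``on $\{\sigma<\infty\}$ one has $\limsup_{t\uparrow\sigma}\|U(t)\|_{\mathcal X}=\infty$'' for a suitable critical trace space $\mathcal X$ with $C^{\gamma_0}(\T^d)\hookrightarrow\mathcal X$, and uniqueness (the latter by the standard quasilinear argument on the difference of two solutions, using that $A$ is locally Lipschitz). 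It then suffices to show $\sigma=\infty$ almost surely and to upgrade the regularity. The key observation is that on $[0,\sigma)$ the process $U$ is simultaneously the weak solution of the \emph{linear} SPDE \eqref{eq:SPDE_lin_intro} with $a(t,x):=A(U(t,x))$ and $b_n:=B_n$: this $a$ is progressively measurable with $\sum_{i,j}|a^{ij}|\le d^2\|A\|_\infty$, evaluating the quasilinear ellipticity condition at $y=U(t,x)$ yields exactly \eqref{eq:stoch_para_intro} with the same $\nu$, and \eqref{eq:bddness_intro}, \eqref{eq:additional_assumption_intro} are immediate from $(B_n)_{n\ge1}\in C^{3+\delta}(\T^d;\ell^2(\R^d))$.

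Next I would rule out finite-time blow-up. Fixing $T<\infty$, the goal is $\P(\sigma\le T)=0$. Since Theorem~\ref{t:DNM_intro} requires coefficients defined on all of $[0,T]$, I localise: set $\tau_k:=\inf\{t\ge0:\|U(t)\|_{C^{\gamma_0}}\ge k\}\wedge\sigma\wedge T$, a stopping time with $\tau_k\uparrow\sigma\wedge T$, define $a^{(k)}(t,x):=A(U(t\wedge\tau_k,x))$ on $[0,T]$, and let $U^{(k)}$ be the unique weak solution (Proposition~\ref{prop:Existence_Uniqueness}) of \eqref{eq:SPDE_lin_intro} with coefficients $(a^{(k)},(B_n)_{n\ge1})$, so $U^{(k)}=U$ on $[0,\tau_k]$. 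The Moser iteration bound of \cite{DG15_boundedness} gives $\|U^{(k)}\|_{L^\infty((0,T)\times\T^d)}<\infty$ almost surely, with a bound depending only on $\nu$, $\|A\|_\infty$, $T$, $d$ and $\|U_0\|_{L^\infty}$ (the $a^{(k)}$ being uniformly bounded and uniformly parabolic), hence \emph{independent of $k$}. Feeding this into the quantitative estimate Theorem~\ref{t:DeGiorgiNashMoser_quantitative}, I obtain for each $\varepsilon\in(0,T)$ a Hölder exponent $\gamma>0$ and an almost surely finite random constant $K_\varepsilon$, both independent of $k$, with $\|U^{(k)}\|_{C^\gamma((\varepsilon,T)\times\T^d)}\le K_\varepsilon$; letting $k\to\infty$ gives $\|U\|_{C^\gamma((\varepsilon,\sigma\wedge T)\times\T^d)}\le K_\varepsilon$ almost surely. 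Hence $a=A(U)\in C^\gamma((\varepsilon,\sigma\wedge T)\times\T^d)$ (composition of the locally Lipschitz $A$ with the bounded Hölder function $U$), with an a.s.\ finite bound that does not degenerate as $t\uparrow\sigma$. Bootstrapping via the stochastic flow transform---pulling $U$ back along the flow of diffeomorphisms generated by the Stratonovich SDE with coefficients $(-B_n)_{n\ge1}$, applying the distributional It\^o--Wentzell formula to land on a random parabolic PDE with Hölder coefficients, and invoking deterministic Schauder estimates pathwise, exactly as in the proof of Theorem~\ref{t:DNM_intro}---I would iterate these a.s.\ finite, $t\uparrow\sigma$-uniform bounds until the norm $\|U(t)\|_{\mathcal X}$ of the blow-up criterion is controlled uniformly on $[\varepsilon,\sigma\wedge T)$. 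On $\{\sigma\le T\}$ this contradicts $\limsup_{t\uparrow\sigma}\|U(t)\|_{\mathcal X}=\infty$; hence $\P(\sigma\le T)=0$, and since $T$ is arbitrary, $\sigma=\infty$ almost surely.

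With global existence in hand, I would obtain the claimed regularity by running the same bootstrap on every cylinder $(\varepsilon,T)\times\T^d$: each application of the flow transform together with deterministic parabolic regularity raises the Hölder exponent of $U$, and the iteration stabilises at $C^{1/2-,1-}_{\loc}((0,\infty)\times\T^d)$. The endpoint reflects two facts: the stochastic flow used in the transformation is driven by Brownian motions, hence only $C^{1/2-}$ in time, which caps the temporal Hölder exponent of $U$ below $1/2$ and, by the parabolic scaling of the Hölder classes, the spatial one below $1$; and $A$ is merely Lipschitz, so $A(U)$ cannot be assumed more than Lipschitz in space, consistently with the same ceiling. Uniqueness of the global solution then follows from uniqueness of the maximal local solution together with $\sigma=\infty$.

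The main obstacle is the step of ruling out blow-up, specifically obtaining a-priori bounds with constants that are \emph{uniform as $t\uparrow\sigma$} and \emph{uniform in the localisation index $k$}: this is precisely why the quantitative Theorem~\ref{t:DeGiorgiNashMoser_quantitative}, whose constants depend only on $\nu$, $\|A\|_\infty$, $\|(B_n)_{n\ge1}\|_{C^{3+\delta}(\T^d;\ell^2(\R^d))}$, $\varepsilon$, $\gamma_0$, $\|U_0\|_{C^{\gamma_0}}$ and $\|U\|_{L^\infty}$---and \emph{not} on the time horizon---is needed, in tandem with the $k$-independent $L^\infty$ bound from Moser iteration. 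Secondary difficulties are checking that finitely many bootstrap steps through the flow transform genuinely reach the critical space $\mathcal X$ in the blow-up criterion (this relies on the quantitative control of the flow of diffeomorphisms and its inverse established earlier in the paper), and the measurability and adaptedness bookkeeping in the freezing construction, which is routine.
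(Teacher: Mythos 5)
Your strategy shares the paper's central mechanism: freeze the coefficients $a(t,x):=A(t,x,U(t,x))$, $b_n:=B_n$, observe that $U$ is then the weak solution to the linear SPDE \eqref{eq:SPDE_lin}, invoke the stochastic De Giorgi--Nash--Moser estimates, and feed the resulting H\"older bound into a blow-up criterion from local well-posedness. But there is a genuine gap at the crucial step.

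You claim that Theorem \ref{t:DeGiorgiNashMoser_quantitative} yields ``a H\"older exponent $\gamma>0$ and an almost surely finite random constant $K_\varepsilon$, both independent of $k$''. Neither version of the estimate supplies this. In Theorem \ref{t:DeGiorgiNashMoser_quantitative} the exponent $\delta_k$ may, and in general does, tend to $0$ as $k\to\infty$; in Theorem \ref{t:DeGiorgiNashMoser} the exponents $\gamma_m$ likewise shrink with $m$. This is not an artefact of the statements: Proposition \ref{prop:blow_up_elliptic_ratio} exhibits smooth coefficients for which the ellipticity ratio of the transformed diffusion matrix explodes with positive probability, so the flow method cannot produce a deterministic uniform exponent. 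Your Schauder bootstrap therefore starts from a false premise, because there is no event of full probability on which $a=A(U)$ lies in $C^\gamma$ for a \emph{fixed} deterministic $\gamma>0$. (The constants in Theorem \ref{t:DeGiorgiNashMoser_quantitative} also depend on $T$ through $\psi_k$, contrary to your assertion; this is harmless here since $T$ is fixed before passing to the limit $T\to\infty$.)

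The paper's fix is to exploit a blow-up criterion --- \eqref{eq:blow_up_criteria} in Proposition \ref{prop:local_quasi} --- that holds for \emph{every} $\varepsilon\in(0,1)$, a consequence of the instantaneous regularization \eqref{eq:instantaneous_regularization_local} from the maximal $L^p$-regularity framework, and then to match the De Giorgi--Nash--Moser index against the criterion's $\varepsilon$ in a double limit. Concretely, Theorem \ref{t:DeGiorgiNashMoser} gives deterministic $\gamma_m>0$ and stopping times $\tau_m$ with $\P(\tau_m=T)\to 1$ such that $u\in C^{\gamma_m}((0,\tau_m)\times\T^d)$ a.s.\ for each fixed $m$; setting $\varepsilon=\gamma_m$ in \eqref{eq:blow_up_criteria} shows that $\{t<\sigma<T,\,\tau_m=T\}$ is null, and sending $m\to\infty$ yields $\P(t<\sigma<T)=0$. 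This limiting argument, matching a degenerating exponent against a flexible criterion, is the ingredient your write-up is missing. Finally, once $\sigma=\infty$ a.s., the asserted $C^{\vartheta,2\vartheta}_{\loc}$-regularity for $\vartheta<1/2$ follows directly from \eqref{eq:instantaneous_regularization_local}; no Schauder bootstrap through the flow transform is needed or performed, and the proof of Theorem \ref{t:DNM_intro} in fact invokes the De Giorgi--Nash--Moser estimate of \cite[Theorem III.10.1]{LSU}, not Schauder estimates.
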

We omit the precise Definition \ref{defi:sol} of a solution here. The reader is referred to Section \ref{s:application_to_quasi} and the notation section at the end of the introduction for more information and the convention for function spaces used in the above, respectively. As for the case of linear equations, the resulting space-time continuity of the solution is important when it comes to interpreting the random field $U$ as a real-world object, as well as for its numerical approximation. 
We remark moreover that under more restrictive assumptions on the diffusion matrix $A$, the solution $U$ can even be shown to become \emph{smooth in space}. More precisely, under the assumptions of Theorem \ref{t:global_quasilinear_intro}, for all $\delta>0$, it holds that 
$$
 A\in C^{3+\delta}_{\loc}(\R;\R^{d\times d})
\quad \Longrightarrow \quad
U\in C_{\loc}^{1/2-,(4+\delta)-}((0,\infty)\times \T^d) \ \text{almost surely}.
$$
In particular, if $B$ and $A$ are smooth, then $U$ is smooth in space. In the aforementioned companion paper \cite{ASV25_quasi}, we will prove the above result for much larger classes of systems of quasilinear SPDEs. The reader is referred to \cite[Subsection 2.4]{AS24_thin_film} for a similar result in the case of fourth-order quasilinear SPDEs. However, we emphasize that, even for smooth $A$, the central step in the proof of global existence of smooth solutions to quasilinear SPDEs as \eqref{eq:SPDE_quasi_intro} remains the application of Theorem \ref{t:DNM_intro} to obtain sufficient control of the quasilinearity $U \mapsto \nabla\cdot (A(U)\cdot \nabla U)$, see Subsection \ref{ss:proof_global_ql} for details.

\subsection{Proof strategy for Theorem \ref{t:DNM_intro} and challenges}\label{ss:proof_strat}
Our main  strategy is to transform (more general versions of) the SPDE \eqref{eq:SPDE_lin_intro} into a random PDE using Kunita's \emph{stochastic method of characteristics} \cite{kunita_book}, which, since its introduction, has proven to be useful in many applications, see, e.g.,
\cite{DaPratoTubaro,Tubaro} and the celebrated work \cite{FGP10}.
More precisely, we let $\xi$ be the \emph{stochastic flow of diffeomorphisms} solving the Stratonovich SDE
\begin{equation}\label{eq:eqn_xi_intro}
\dd \xi_{t}(x)\,=\,  - \textstyle{\sum}_{n \ge 1} b_n(\xi_t(x )) \circ \,\dd w^n_t,\qquad \xi_0(x)\,=\, x,
\end{equation}
for each $x\in \R^d$ (where we identify the vector fields $b_n \colon \T^d \to \R^d$ with their periodic extensions to the whole space to apply chain rule type arguments more directly). 
Then the \emph{It\^o--Wentzell formula} yields formally that if $u$ satisfies \eqref{eq:SPDE_lin_intro}, then the composition $v(t,x)=u(t, \xi_t(x))$ solves  the $\omega$-dependent PDE
\begin{equation}\label{eq:eqn_v_intro}
	\partial_t v\,=\, \nabla \cdot  \bigl( \alpha  \nabla v 
		\bigr)   \,+\, \beta \cdot  \nabla  v,
\end{equation}
where the new transport coefficient $\beta $ depends only on  $\xi$ (see the formula
 \eqref{eq:defi_As}), and 
\begin{equation}\label{eq:new_diff_coeff_intro}
\alpha(t,x) \, = \, (D\xi_t(x))^{-1,\top} \Bigl( 
a(t , \xi_t(x) ) \,-\, \tfrac{1}{2} \textstyle{\sum}_{n\ge 1} \bigl(b_n \otimes b_n )(t, \xi_t(x))
\Bigr) (D\xi_t(x))^{-1}
\end{equation}
features in particular the stochastic parabolicity matrix \eqref{eq:stoch_para_intro}, but also the Jacobian of the stochastic flow.
As a key tool to analyze  \eqref{eq:eqn_v_intro}, we derive \emph{regularity of $\xi$} and its inverse, complementing the seminal results from Kunita's theory of stochastic flows of diffeomorphisms \cite{Kunita}. 
As soon as we succeed in this, we can first conclude that $v $ is H\"older continuous using the \emph{deterministic version of the De Giorgi--Nash--Moser estimates}, and in a second step, that also $u(t,x) = v(t,\xi_t^{-1}(x))$ is H\"older continuous.

The above program faces several technical difficulties, which we address in the preliminary Section \ref{Sec:prep_results}: 
Firstly, with our intention to apply the results from \cite{Kunita} to SPDEs of the form \eqref{eq:SPDE_lin_intro}  in mind, we review some generalizations of said theory to  \emph{equations driven by infinitely many Brownian motions} like \eqref{eq:eqn_xi_intro} in Subsections \ref{ss:Rd_flows} and \ref{sec:periodic_flows}. 
To obtain later on also a quantitative version, Theorem \ref{t:DeGiorgiNashMoser_quantitative}, of our main result, we prove, among other things,  an estimate on the H\"older norm of the inverse flow $\xi_t^{-1}$. The method to achieve the latter seems to be {new} and is based on deriving an a-priori type bound for a fixed-point equation in the spirit of the inverse function theorem.
With the necessary properties of $\xi$ at our disposal, we secondly need to rigorously justify the transformation of \eqref{eq:SPDE_lin_intro} into \eqref{eq:eqn_v_intro}. We face the problem that, on the one hand, classical versions of the required It\^o-Wentzell formula assume more regularity of the random field $u$  which we only know to lie in $C_t(L^2_x)\cap L^2_t (H^1_x)$ a-priori, cf.\ \cite[Theorem I.8.3]{Kunita}, \cite[Theorem 3.3.1]{kunita_book} or  \cite[Proposition 2]{Tubaro}, whereas Krylov's version \cite{KrylovIto} tailored to distributional solutions to SPDEs on the other hand assumes the flow to depend additively on the initial value, i.e., that $\xi_t(x) =x + Y_t$ for some stochastic process $Y_t$ that is independent of $x$. Since neither is sufficient for our purposes, we derive in Subsection \ref{ss:ito-wentzell} a new version of the It\^o--Wentzell formula applicable in our situation. Our key ingredient to give  meaning to the  composition of $\xi$ with the \emph{potentially rough right-hand side $\nabla  \cdot (a \nabla u)\in L^2_t(H^{-1}_x)$} is the distributional composition rule
\begin{equation}\label{eq:distr_comp_intro}
\langle 
f\circ \xi, \phi 
\rangle \,=\,\langle 
f ,( \phi\circ \xi^{-1} ) |\det(D\xi^{-1})|
\rangle.
\end{equation}
Additionally, we repeatedly make use of the derived properties of $\xi$ to make sure that emerging factors, e.g., on the right-hand side of \eqref{eq:distr_comp_intro}, can be estimated appropriately. We remark that while we only require periodic stochastic flows and an It\^o--Wentzell formula on $\T^d$ to prove our main theorem, 
we state and provide versions of these auxiliary results on the whole space, as they may be fruitful also for other applications. 
By completing these steps, we are ready to provide a qualitative (Theorem \ref{t:DeGiorgiNashMoser}) and quantitative version (Theorem \ref{t:DeGiorgiNashMoser_quantitative}) of the stochastic De Giorgi--Nash--Moser theorem in Section \ref{app:DeGiorgi_Nash_Moser}. While the former is based on the qualitative aspects of the stochastic flow from the previous sections, the latter relies on corresponding quantitative bounds. We also mention that to deal with \emph{additional lower order terms} in  \eqref{eq:SPDE_lin_intro}, a \emph{subsequent transformation}  of the equation for $v$ is employed, which is similar to the proof of \cite[Theorem 4.2]{HWW17}.

The fact that the transformed diffusion matrix $\alpha$ from \eqref{eq:new_diff_coeff_intro} involves not only the original coefficients but also the inverse Jacobian $\xi$ already indicates why our approach cannot yield a uniform Hölder exponent—neither in the simplified Theorem \ref{t:DNM_intro}, nor in Theorems \ref{t:DeGiorgiNashMoser}--\ref{t:DeGiorgiNashMoser_quantitative}.
Indeed, whenever the flow disproportionally stretches some regions of $\T^d$ while compressing other parts, the \emph{ellipticity ratio} of $\alpha$ may be arbitrarily large on a set of positive probability,
so we can indeed only conclude $v\in C^{\gamma}((0,T)\times \T^d)$ where $\gamma>0$ is a random variable that can potentially be arbitrarily small on a set of positive probability. This phenomenon is further explored in the subsequent Section \ref{sec:unif_constants}, where we provide examples of $(a,(b_n)_{n\ge 1})$  for which the transformed coefficient admits either an \emph{exploding or uniformly bounded ellipticity ratio}. While the former shows that our approach can at best yield results for which the H\"older exponent may degenerate in $\omega$, the latter provides additional assumptions under which it can indeed be chosen uniformly; see Section \ref{sec:unif_constants} for details.

\subsection{Open problems}
\label{ss:open_problems}
The results presented here, while establishing H\"older regularity for the linear SPDE \eqref{eq:SPDE_lin_intro} (or more generally $\eqref{eq:SPDE_lin}$) under regularity conditions for the transport noise coefficients, suggest several open challenges related to De Giorgi--Nash--Moser estimates for parabolic SPDEs.
These challenges primarily stem from the constraints imposed by the stochastic flow method used in our analysis. We detail the most relevant open problems below.

\begin{itemize}
\item {\textsc{The case of rough transport noise}.}
Comparing Theorem \ref{t:DNM_intro} with the classical De Giorgi--Nash--Moser estimates \cite{DeGiorgi_57,moser_60,nash_58}, one can expect that the regularity conditions \eqref{eq:additional_assumption_intro} (or the weaker version in \eqref{Eq:alternative_ass}) can be relaxed or omitted. Indeed, the deterministic case suggests that such estimates should hold only assuming parabolicity and boundedness, i.e., \eqref{eq:stoch_para_intro} and \eqref{eq:bddness_intro}. 
However, the approaches via Harnack inequalities or the De Giorgi method to control oscillation of solutions to \eqref{eq:SPDE_lin_intro} over a decreasing family of balls do not seem to work, see \cite{DG17_continuity} for the case of Harnack inequalities. 
The main obstruction is that the moment estimates available in the stochastic setting are too weak to perform the above-mentioned iteration arguments over decreasing families of balls. 
\item {\textsc{Uniform H\"older exponent}.}
It is unclear whether one can choose a sequence of positive numbers $(\g_m)_{m}$ in Theorem \ref{t:DNM_intro} that is independent of $m$ even in the case of smooth transport noise $(b_n)_n$. Partial results in this direction can be found in Proposition \ref{prop:sufficient_uniform_gamma}.
\item {\textsc{Moment bounds}.}
Our proof of Theorem \ref{t:DNM_intro} by using flows as described in Subsection \ref{ss:proof_strat} does not provide any information on moments of the random variable $\|u\|_{C^\gamma((0,T)\times \T^d)}$, where $\g>0$ might also depend on $\om$. As discussed above, in our approach, the main obstacle is that the transformed matrix coefficient defined in \eqref{eq:new_diff_coeff_intro} might lose the uniform ellipticity. This and the delicate dependence of the H\"older exponent and constants in the De Giorgi--Nash--Moser estimates for \eqref{eq:eqn_v_intro} on the ellipticity ratio (see \cite[Theorem III.10.1]{LSU} and the proof of Theorem \ref{t:DeGiorgiNashMoser}) prevents us from asserting any quantitative bounds on moments of H\"older-type norm for $u$.
\end{itemize}

Let us point out that addressing the first problem in the above list would allow us to strengthen the global regularity results for quasilinear SPDEs (Theorem $\ref{t:global_quasilinear_intro}$), also including SPDEs with \emph{nonlinear} transport noise, see e.g., \cite{bechtel_extrapolation_2025,CSZ19,DG20,FG_ARMA} for examples.

\subsection{Organization of the manuscript}
The rest of this manuscript is separated into four sections: In the following Section \ref{Sec:prep_results}, we collect preparatory results concerning stochastic flows of diffeomorphisms as well as the It\^o--Wentzell formula. 
We start in Subsection \ref{ss:Rd_flows} by reviewing some generalizations of Kunita's theory \cite{Kunita} of $\R^d$-valued stochastic flows to equations driven by infinitely many Brownian motions with coefficients depending only measurably on $(t,\omega)$. Due to the unboundedness of the domain $\R^d$, additional facts regarding $\xi$ and its inverse stated in this subsection are of a qualitative nature. In the following Subsection \ref{sec:periodic_flows} we provide possible improvements for spatially periodic stochastic flows, which will play a role in the proof of the quantitative version of our main result, Theorem \ref{t:DeGiorgiNashMoser_quantitative}. The last preparatory Subsection \ref{ss:ito-wentzell} is dedicated to the proof of the It\^o--Wentzell formula, which is the key ingredient to perform the stochastic characteristic method with a merely weak solution to an SPDE. We remark once more that our version, Proposition \ref{prop:ItoWentzel}, of the It\^o--Wentzell formula is purposely stated on the whole space and may be of interest also for other applications.

We begin the next Section \ref{app:DeGiorgi_Nash_Moser}  with rigorously stating our main results, Theorem \ref{t:DeGiorgiNashMoser} and Theorem \ref{t:DeGiorgiNashMoser_quantitative}, which apply to more general equations than \eqref{eq:SPDE_lin_intro}. 
Their proofs are deferred to the subsequent Subsection \ref{ss:proofs_main}. The whole Section \ref{sec:unif_constants} is dedicated to a more detailed discussion regarding the diffusion matrix of the transformed equation  \eqref{eq:new_diff_coeff_intro}. In Proposition \ref{prop:blow_up_elliptic_ratio}, we provide an explicit example where, with positive probability, the ellipticity ratio of $\alpha$ explodes, even at each space-time coordinate individually. Sufficient conditions to avoid such a blow-up resulting in a strengthened but more restrictive version of Theorem \ref{t:DeGiorgiNashMoser} are provided in Proposition \ref{prop:sufficient_uniform_gamma}. Proofs concerning this section are once more postponed to the following part, Subsection \ref{ss:proofs_uniform_gamma}. Analogously, the more general version  Theorem \ref{t:global_quasilinear} of Theorem \ref{t:global_quasilinear_intro} together with its assumptions is stated in Section \ref{s:application_to_quasi} and its proof is given in Subsection \ref{ss:proof_global_ql}.

\noindent
Before all that, we collect some notation that will be used throughout the manuscript.

\subsection*{Notation}
\emph{General notation.}
The parameter  $T\in (0,\infty)$ will stand for the time horizon and $d\in \N$ for the dimension of the spatial domain. Here and throughout, $\N$ denotes the positive integers, starting from $1$. Moreover, we indicate by $C$ a universal constant, and if it depends on other choices of parameters $(a,b,\dots)$, we write $C_{(a,b,\dots)}$. We remark that, as usual, such constants may vary from line to line, and we sometimes also indicate the relation $A\le C B$ by writing  $A \lesssim B$  or $A \lesssim_{(a,b,\dots) } B$ if the universal constant depends on parameters. The Euclidean $d$-dimensional space is denoted by $\R^d$, and $\T^d$ stands for the $d$-dimensional torus of unit length. By $B_r(y)$ we denote the ball of radius $r$ centered at $y$ in $\R^d$.

\emph{Calculus.}
We make use of the classical notation for differential operators, so $\nabla u$ denotes the gradient of $u$, and for the divergence we write $\nabla \cdot f $. We also write $D f=(\partial_j f^i)_{i,j=1}^d$ for the Jacobian of $f$ and use the partial derivative notation $\partial_i$ and $\partial_t$ to denote the derivative in the direction of the $i$-th spatial coordinate or the time variable. Here, all derivatives may be understood in the weak, distributional, or classical sense, depending on the regularity of the functions involved. We also make use of the \emph{Einstein summation convention}, if necessary: A repeated index in a formula will be summed over. We also recall the topological/geometric notions of a \emph{homeomorphism}, i.e., a continuous bijection of space with continuous inverse, and the notion of a \emph{($C^k$-) diffeomorphism}, i.e., a ($k$-times) continuously differentiable homeomorphism with ($k$-times) continuously differentiable inverse.

\emph{Probability.} Throughout, let $(\O,\A,\P)$ be a probability space with a complete filtration $\F=(\F_t)_{t\geq 0}$, i.e., we assume that $\F_0$ contains all $\P$-null sets. The sequence $(w^n)_{n\ge 1}$ will stand for a sequence of independent  $\F$-Brownian motions, $\P(\Lambda)$ for the probability of an event $\Lambda\in \A$ and $\E[X]$ for the expectation of a random variable $X$. As usual, we write 
\[
\sum_{n\ge 1}  \int_0^t g_{n,s} \dd w_s^n 
\]
for the \emph{It\^o-integral} of a progressively measurable $g\in \ell^2$ against the $\ell^2$-valued cylindrical Wiener process $(w^n)_{n\ge 1}$, see \cite{DPZ,LiuRock} for more information on Hilbert-space valued stochastic integration.

\emph{Stochastic flows}. We will use  the letters $\mu$ and $\sigma = (\sigma_n)_{n\ge 1}$ for coefficients of a \emph{stochastic flow of diffeomorphisms}.  To this end, $\mu:[0,\infty)\times\Omega\times\R^d\to \R^d$ and $\sigma:[0,\infty)\times\Omega\times\R^d\to \ell^2(\R^d)$ are throughout assumed to be $\mathcal{P}\otimes \mathcal{B}(\R^d)$-measurable and $\xi$ will denote the induced flow, i.e., $\xi_{s,t}(x)$ is assumed to satisfy 
\begin{equation}\label{eq:SDE_2}
	\left\{
	\begin{aligned}
		\dd\xi_{s,t }(x) &= \mu_t(\xi_{s,t }(x) )\, \dd t + \displaystyle{\sum_{n\ge 1}}\sigma_{n,t}(\xi_{s,t }(x) )\, \dd w_t^n, \qquad t\in [s,T],  \\
		\xi_{s,s}(x) &= x.
	\end{aligned}
	\right.
\end{equation}
Under suitable assumptions on $(\mu,\sigma)$, $\xi$ exists uniquely and can be modified to be almost surely continuous in $(s,t,x)$, see \cite{Kunita}, in which case we always choose this modification. As this continuous modification is uniquely determined up to a null-set, it will also be the modification which is H\"older continuous or differentiable, whenever such a property holds as well.
The collection of initial and terminal times is denoted by
\begin{equation}\label{Eq33}
\S=	\{ (s,t) \in [0,T]\times [0,T] \,|\, s\leq t\}.
\end{equation}
We abbreviate moreover $\xi_{t} = \xi_{0,t}$ and if applicable denote the inverse of $\xi_{s,t}$ (as a function $\R^d\to \R^d$) by $\Psi_{t,s}$ defined for tuples  $(t,s) $ from $ \invS =\{(t,s)|(s,t) \in \S\} $. Accordingly we set $\Psi_t = \Psi_{t,0} = \xi_t^{-1}$ and  we use $\psi_t$ to denote the inverse  Jacobian $(D\xi_t)^{-1}$.

\emph{Norms and spaces.} We write $L^p(S,\nu;\mathscr{X})$ for the \emph{Bochner space} of strongly measurable, $p$-integrable $\mathscr{X}$-valued functions for a measure space $(S,\nu)$ and a Banach space $\mathscr{X}$. 
If $\mathscr{X}=\R$, we write $L^p(S,\nu)$, and if it is clear which measure we refer to, we also leave out $\nu$. Moreover, if $S$ is countable and equipped with the counting measure, we write $\ell^p(S;\mathscr{X})$ instead of $L^p(S,\nu;\mathscr{X})$ and for $S=\N$ just $\ell^p(\mathscr{X})$. If on the other hand $I=(s,t)$ is an interval and $w$ a density, we write $L^p(s,t;w;\mathscr{X})$ for $L^p(I,w\, \dd t;\mathscr{X})$. In the last Section \ref{s:application_to_quasi}, we use in particular power weights of the form $w_\kappa (t)=|t|^\kappa$ and if $w=1$ we just write $L^p(s,t;\mathscr{X})$. Lastly, in any of the cases above, we denote by $L^p_{\mathfrak{G}}(S,\nu;\mathscr{X})$   the closed subspace of strongly $\mathfrak{G}$-measurable functions in $L^p(S,\nu;\mathscr{X})$.

Regarding \emph{Sobolev spaces} we use the standard notation $W^{k,q}(\mathcal{O})$ for $k\in \N$ and $q\in [1,\infty]$ for either $\mathcal{O}\in \{\R^d,\T^d \}$, and for the Hilbert space case $q=2$ we just write $H^k(\mathcal{O}) = W^{k,2}(\mathcal{O})$. We also recall  the vector-valued  \emph{Bessel potential} scale $H^{s,q}(\mathcal{O};\mathscr{X}) $ for $s\in \R$ and $q\in (1,\infty)$, defined through 
\[
\|f\|_{H^{s,q}(\mathcal{O};\mathscr{X})} \,=\,  \| (1-\Delta)^{s/2} f\|_{L^q(\mathcal{O};\mathscr{X})},
\]
where we write $H^{s,q}(\mathcal{O})$, if $\mathscr{X} =\R$. Also here we set in the Hilbert case $H^{s}(\mathcal{O};\mathscr{X}) = H^{s,2}(\mathcal{O};\mathscr{X})$, which is in line with the previous convention since $H^{k,q} (\mathcal{O}) = W^{k,q}(\mathcal{O})$ with equivalent norms for $k\in \N$.
In Section \ref{s:application_to_quasi} we encounter moreover the periodic \emph{Besov space} $B^{s}_{q,p}(\T^d)$ and refer for its definition to \cite{schmeisser1987topics}.

If $(S,d)$ is a metric space we write $C(S;\mathscr{X})$ for the bounded continuous functions and $C^{\theta}(S;\mathscr{X})$ for the subset of \emph{$\theta$-H\"older continuous functions} for $\theta \in (0,1)$, where we leave $\mathscr{X}$ out again if $\mathscr{X}=\R$.  For $\theta=1$, we write $\Lip (S; \mathscr{X})$ for the bounded and Lipschitz continuous functions with the special case $\Lip(S) =\Lip(S; \R)$. If $\tilde{S}$ is another metric space  we define the \emph{anisotropic H\"older space }
\begin{equation}\label{eq:defi_torus_holder}
	C^{\theta_1,\theta_2}(S\times \tilde{S})\,=\, \bigg\{ f \in C(S\times \tilde{S}) \;\biggr|\;
[f]_{C^{\theta_1 ,\theta_2}(S\times \tilde{S})}
\,=\,
\sup_{t\ne s,\tilde{t}\ne \tilde{s}}
\frac{|f(t,\tilde{t})-f(s,\tilde{s})|}{|t-s|^{\theta_1}+|\tilde{t} -\tilde{s}|^{\theta_2}}\,<\, \infty \bigg\},\quad \theta_1,\theta_2\in (0,1].
\end{equation}
If the underlying spaces are open subsets $S, \tilde{S}\subseteq \mathcal{O}$, where again $\mathcal{O}\in \{\R^d,\T^d \}$, then we also have the spaces of \emph{$k$-times continuously differentiable} and \emph{$(k+\theta)$-H\"older  functions} with their usual norms
\[
\|f\|_{C^{k}(S)} \,=\, \sum_{|\beta|\le k} \| \partial_\beta f\|_{C(S)}, \qquad 
\|f\|_{C^{k+\theta}(S)} \,=\, \|f\|_{C^{k-1}(S)}  \,+\, \sum_{|\beta|= k} \| \partial_\beta f\|_{C^\theta(S)},
\]
for $k\in \N$ and $\theta\in (0,1)$ with suitable modifications in the anisotropic setting.

Finally, we recall the \emph{local version $F_\loc(S)$} of any of the above spaces $F$, which are generally defined as the collection of functions $f$ such that $f|_O \in F(O) $ for any compactly contained, relatively open subset $O\subset S$.  An elegant way to define the topology on local Sobolev spaces on $\R^d$ is laid out at the beginning of Subsection \ref{ss:ito-wentzell}. Similarly, if $p\in \R$ is a parameter in the definition of $F$, we write $F^{p-}(S)$ for the collection of all $f\in \cap_{q<p}F^q(S)$ with the induced locally convex topology.

 \emph{Distributional composition.} Due to its importance in our version of the It\^o--Wentzell formula proved in Subsection \ref{ss:ito-wentzell} we  recall that for any $C^2$-diffeomorphism $\xi\colon \R^d \to \R^d$ one can define the distributional composition with some $f\in H^{-1}_{\loc}(\R^d)$ by
\begin{equation*}
\langle 
f\circ \xi, \phi 
\rangle \,=\,\langle 
f ,( \phi\circ \xi^{-1} ) |\det(D\xi^{-1})|
\rangle_{H^{-1}(B_k(0)) \times {H^1_0(B_k(0))}} ,\qquad \phi\in C_c^\infty(\R^d),
\end{equation*}
for any $k\in \N$ such that $\supp ( \phi\circ \xi^{-1})$ is compactly contained in $B_k(0)$.
We remark that since $( \phi\circ \xi^{-1} ) \det(D\xi^{-1})$ is continuously differentiable it is an element of $C_c^1(B_k(0))$ and the above dual pairing is well-defined.
 
\emph{Stochastic PDE.} In Section \ref{app:DeGiorgi_Nash_Moser} we are concerned with linear stochastic PDE of the form \begin{equation}\label{Eq120}
	\begin{cases}
		\displaystyle{
			\dd u \, - \, Au \,\dd t \, = \, f \, \dd t \,+\, \bigl(B_n u \,+\,g_n\bigr) \,\dd w^n_t,}
	\end{cases}
\end{equation}
where we employ the shorthand notations
\begin{align}\label{Eq20}
	A u &\,= \,\partial_i(a^{ij}\partial_j u) +  a^i \partial_i u + a^0 u,
	\qquad  B_n u \,= \, b^i_n \partial_i u \, + \,b^0_n u,
	\qquad  f  \,=\, f^0 \,+\,  \partial_i f^i.
\end{align}
The variables for the transformed equations will be denoted by $v$ and $z$ and the new coefficients $(\alpha,F,G)$ and $(\alpha,\overline{F})$ will take the respective role of $(a,f,g)$ in \eqref{Eq120}--\eqref{Eq20}.

\section{Preparatory results}
\label{Sec:prep_results}
As indicated in the introductory section, this part of the manuscript is dedicated to providing auxiliary results revolving around stochastic flows of diffeomorphisms and the It\^o--Wentzell formula. The former describes a suitably modified field of solutions $\xi_{s,t}(x)$ to the stochastic differential equations
\eqref{eq:SDE_2}
with initial value $x\in \R^d$ attained at time $s\in [0,\infty)$, and will be the subject of the Subsections \ref{ss:Rd_flows}--\ref{sec:periodic_flows}. Throughout, we assume that the coefficient functions $\mu:[0,\infty)\times\Omega\times\R^d\to \R^d$ and $\sigma:[0,\infty)\times\Omega\times\R^d\to \ell^2(\R^d)$ are $\mathcal{P}\otimes \mathcal{B}(\R^d)$-measurable. The latter notion refers to a version of It\^o's formula for the situation that one composes a semimartingale not with a $(t,x)$-dependent function but with a random field and is investigated in Subsection \ref{ss:ito-wentzell}. 

\subsection{Stochastic flows on $\R^d$}\label{ss:Rd_flows}
In this subsection, we collect some properties of stochastic flows on the whole space $\R^d$ by closely following the seminal work \cite{Kunita}.
  As a start, we consider the situation that \eqref{eq:SDE_2} has Lipschitz continuous coefficients with linear growth, i.e., we impose the following assumption.
\begin{assumption}\label{Ass:Lip}
There exists a constant $C \in (0,\infty)$ such that a.s.
	\begin{align}\begin{split}\label{eq:lipschitz_cond}
	   |\mu_t(x) - \mu_t(y)| \, +\, \|\sigma_t(x) - \sigma_t(y)\|_{\ell^2(\R^d)} \,&\leq \,C|x-y| ,\\
       |\mu_t(x) |\,+\, \|\sigma_t(x)\|_{\ell^2(\R^d)} \,& \leq \,C \bigl(1+|x|\bigr),
	\end{split}\end{align}
	for all $ t\in [0,T]$ and $ x,y\in \R^d$.
\end{assumption}
Classically, the above assumption implies the existence and uniqueness of solutions $\xi_{s,\cdot}(x)$ to \eqref{eq:SDE_2}. 
Additionally, by the Kolmogorov--Chentsov theorem jointly in $(s,t,x)$, one can interpret (suitable modifications of) the solutions as a random flow on $\R^d$ as laid out in \cite{Kunita}.  For later reference, we recall the quantitative version of the Kolmogorov--Chentsov theorem on metric spaces from  \cite[Theorem 8.2]{Cox_vWinden}, see also  \cite[Theorem 1.1]{KU_kolmogorov} for a more general result. 
\begin{theorem}[Kolmogorov--Chentsov]\label{Thm_KC} Let $(M,d_M)$ be a metric space satisfying the following conditions:
\begin{enumerate}[(i)]
    \item \emph{(Finite diameter)}  We have $\diam_M \coloneqq \sup_{x,y \in M} d_M(x,y) <\infty$.
    \item \emph{(Finite Minkowski dimension)} There exist $D\in (0,\infty)$ and $C<\infty$ such that $M$ can be covered by no more than $C r^{-D}$ open balls in $(M,d_M)$  of radius $r$, for any $r\in (0,\diam_M]$.
    \item \emph{(Finite doubling number)} There exist $n\in \N$ such that any open ball in $(M,d_M)$ with radius $r$ can be covered by $n$ open balls in $(M,d_M)$ of radius $r/2$.
\end{enumerate}
    Then for $p \in (D,\infty) \cap [1,\infty)$, $\alpha\in (D/p,1)$ and $\beta \in (0,\alpha - D/p)$ and any measurable
    $Z \colon \Omega\times M \to \R^d$ with 
    \begin{equation}\label{Eq91} C_Z \,\coloneq\, 
    \sup_{x\in M} \E[|Z(x)|^p]^{1/p} \,+\,
    \sup_{x,y \in M} \frac{\E [ |Z(x) - Z(y)|^{\alpha p }]^{1/p} }{d_M(x,y)^\alpha  } \,<\, \infty,
    \end{equation}
    there exists a continuous modification $\tilde{Z}$ of $Z$ satisfying the bound 
    \begin{equation}\label{Eq92}
    \E\biggl[ \sup_{x\in M} |Z(x)|^p \,+\, 
     \sup_{x,y \in M} \frac{|Z(x) - Z(y)|^p }{d_M(x,y)^{\beta p} }
    \biggr]^{1/p}\,
    \lesssim_{ (\alpha,\beta,p, M)}  \, C_Z.
    \end{equation}
\end{theorem}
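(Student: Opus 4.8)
The plan is to run a chaining argument over a sequence of dyadic nets in $M$, with the covering hypotheses (ii)--(iii) playing exactly the role that the lattice $2^{-k}\Z^{D}$ plays in the classical Kolmogorov--Chentsov theorem on $[0,1]^{D}$. First I would normalize: rescaling the metric (which only changes the constant, allowed to depend on $M$) I may assume $\diam_M\le 1$, and since $\sup_x|Z(x)|\le |Z(x_0)|+\diam_M^{\beta}[Z]_{C^{\beta}(M)}$ for a fixed $x_0$ and $\E[|Z(x_0)|^{p}]^{1/p}\le C_Z$ by \eqref{Eq91}, it suffices to construct a continuous modification $\tilde Z$ with $\E\big[[\tilde Z]_{C^{\beta}(M)}^{p}\big]^{1/p}\lesssim C_Z$. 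For each $k\ge 0$ I would fix a maximal $2^{-k}$-separated set $\mathcal N_k\subset M$ (automatically a $2^{-k}$-net); by (ii) it has $|\mathcal N_k|\lesssim 2^{kD}$, while (iii) bounds, uniformly in $u\in M$, the number of points of $\mathcal N_k$ within distance $c2^{-k}$ of $u$ by a constant depending only on $c$ and the doubling number. Hence the set $E_k$ of pairs $(u,v)\in\mathcal N_k\times\mathcal N_{k+1}$ with $d_M(u,v)\le 2\cdot2^{-k}$, and the set $\tilde E_k$ of pairs in $\mathcal N_k\times\mathcal N_k$ with $d_M(u,v)\le 3\cdot2^{-k}$, both have cardinality $\lesssim 2^{kD}$.

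Next comes the chaining skeleton. I would pick Borel maps $\pi_k\colon M\to\mathcal N_k$ with $d_M(x,\pi_k(x))\le 2^{-k}$ and, on the event $\{\sum_kM_k<\infty\}$ with $M_k:=\max_{(u,v)\in E_k}|Z(u)-Z(v)|$, set
\[
\tilde Z(x):=Z(\pi_0(x))+\sum_{k\ge 0}\big(Z(\pi_{k+1}(x))-Z(\pi_k(x))\big),
\]
the $k$-th summand being dominated by $M_k$ uniformly in $x$ since $d_M(\pi_k(x),\pi_{k+1}(x))\le 2\cdot2^{-k}$. Because $\E[|Z(\pi_k(x))-Z(x)|^{\alpha p}]\le C_Z^{p}2^{-k\alpha p}\to0$, one has $Z(\pi_k(x))\to Z(x)$ in probability while $Z(\pi_k(x))\to\tilde Z(x)$ almost surely on $\{\sum_kM_k<\infty\}$, so $\tilde Z$ is a modification of $Z$ once $\P(\sum_kM_k<\infty)=1$ is established. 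For $x\ne y$ with $2^{-(j+1)}<d_M(x,y)\le 2^{-j}$, inserting $\pi_j(x),\pi_j(y)$ (which satisfy $d_M(\pi_j(x),\pi_j(y))\le 3\cdot2^{-j}$) yields the pathwise bound
\[
|\tilde Z(x)-\tilde Z(y)|\le 2\sum_{k\ge j}M_k+\tilde M_j,\qquad \tilde M_j:=\max_{(u,v)\in\tilde E_j}|Z(u)-Z(v)|,
\]
which both gives uniform continuity of $\tilde Z$ on that event and the estimate $[\tilde Z]_{C^{\beta}(M)}\lesssim \sum_{k\ge0}2^{k\beta}(M_k+\tilde M_k)$.

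The moment estimate is then the heart of the matter. From the H\"older term of \eqref{Eq91}, the definition of $E_k,\tilde E_k$, and the cardinality bounds above,
\[
\E[M_k^{\alpha p}]\le\sum_{(u,v)\in E_k}\E[|Z(u)-Z(v)|^{\alpha p}]\lesssim 2^{kD}C_Z^{p}2^{-k\alpha p}=C_Z^{p}2^{-k(\alpha p-D)},
\]
and likewise for $\tilde M_k$; since $\alpha p>D$ this is summable after multiplying by $2^{k\beta\alpha p}$ whenever $\beta<1-D/(\alpha p)$, which in particular secures $\P(\sum_kM_k<\infty)=1$ and an $L^{\alpha p}$ bound on $[\tilde Z]_{C^{\beta}}$. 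To reach the full $L^{p}$ bound of \eqref{Eq92} I would combine this decay with the \emph{point-value} part of \eqref{Eq91}, which gives $\E\big[(\max_{w\in\mathcal N_k}|Z(w)|)^{p}\big]\lesssim 2^{kD}C_Z^{p}$ and hence $\|M_k+\tilde M_k\|_{L^{p}}\lesssim 2^{kD/p}C_Z$; interpolating the small-exponent decaying bound against this large-exponent crude bound, and exploiting the strict slack $\beta<\alpha-D/p<1-D/(\alpha p)$, one arranges $\sum_k2^{k\beta}\|M_k+\tilde M_k\|_{L^{p}}\lesssim C_Z$, after which summing the series of the previous paragraph in $L^{p}$ and using the reduction of the first paragraph produces \eqref{Eq92}.

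I expect the main obstacle to be precisely this integrability upgrade in the last step: the increment hypothesis only controls the $\alpha p$-th moment of $Z(u)-Z(v)$, so bare chaining delivers an $L^{\alpha p}$ --- not $L^{p}$ --- H\"older bound, and squeezing out the full $L^{p}$ estimate forces one to play the $L^{p}$-control of the point values against the room $\beta<\alpha-D/p$ in the exponent, with the dyadic sums split at a threshold tuned to the truncation level. A secondary difficulty, invisible in the Euclidean setting, is purely metric-geometric: extracting from the Minkowski-dimension and doubling hypotheses the uniform neighbour counts of the first paragraph and the measurable near-point selections $\pi_k$, together with the routine but fiddly verification that $\tilde Z$ is genuinely continuous (not merely continuous on a dense set) and is a bona fide modification of $Z$.
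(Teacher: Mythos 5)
The paper does not prove Theorem 3.2; it cites \cite[Theorem 8.2]{Cox_vWinden} (and \cite[Theorem 1.1]{KU_kolmogorov}), so there is no in-paper proof to compare against. I will therefore assess the proposal on its own terms.

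Your chaining skeleton is sound and standard: maximal $2^{-k}$-separated nets with $|\mathcal N_k|\lesssim 2^{kD}$ (from (ii)), uniformly bounded neighbour counts (from (iii)), measurable near-point selections $\pi_k$, the telescoping definition of $\tilde Z$ on $\{\sum_k M_k<\infty\}$, the verification that it is a modification, the chain bound $|\tilde Z(x)-\tilde Z(y)|\le 2\sum_{k\ge j}M_k+\tilde M_j$, and the estimate $\E[M_k^{\alpha p}]\lesssim C_Z^{p}2^{-k(\alpha p-D)}$ are all correct. This indeed delivers an $L^{\alpha p}$ bound on $[\tilde Z]_{C^{\beta}}$ for $\beta<1-D/(\alpha p)$.

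The gap is precisely the step you yourself call the ``heart of the matter,'' and the fix you propose does not work. You claim that, combining $\|M_k\|_{L^{\alpha p}}\lesssim C_Z\,2^{-k(1-D/(\alpha p))}$ with the crude bound $\|M_k\|_{L^{p}}\lesssim C_Z\,2^{kD/p}$, ``interpolating'' produces $\sum_k 2^{k\beta}\|M_k+\tilde M_k\|_{L^{p}}\lesssim C_Z$. But on a probability space $\|\cdot\|_{L^{\alpha p}}\le\|\cdot\|_{L^{p}}$, so the decaying estimate is in a \emph{weaker} norm than the target: interpolation between these two can only control $\|M_k\|_{L^{q}}$ for $q$ strictly between $\alpha p$ and $p$, never improve $\|M_k\|_{L^{p}}$ itself. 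The only genuine $L^{p}$ bound you have is $2^{kD/p}C_Z$, which grows with $k$, so the series $\sum_k 2^{k\beta}\|M_k+\tilde M_k\|_{L^{p}}$ diverges on the face of it, and the Minkowski-by-scale route $\|\sum_k 2^{k\beta}M_k\|_{L^{p}}\le\sum_k 2^{k\beta}\|M_k\|_{L^{p}}$ cannot close. The same obstruction survives every elementary reshuffling (H\"older in $\omega$, H\"older in $k$, truncating $M_k$ at a level tuned to $k$, tail-splitting the layer-cake integral, GRR): each one eventually asks for a moment of the increment \emph{above} $\alpha p$, which the hypothesis \eqref{Eq91} does not supply. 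The restriction $\beta<\alpha-D/p$ (strictly stronger than $\beta<1-D/(\alpha p)$) is precisely the price of the $L^{p}$ conclusion and must enter a correct argument in a way that scale-by-scale Minkowski simply cannot see; your sketch uses the $L^{\alpha p}$ and $L^{p}$ inputs separately and never exploits this slack structurally. So as written, the integrability upgrade — which you correctly identify as the crux — is missing, not merely deferred.
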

For us, the key point of the above is that it allows us to deduce a quantitative version of anisotropic Kolmogorov--Chentsov theorems like \cite[Appendix A]{DKN_hitting} and \cite[Theorem I.10.1]{Kunita}. 
Indeed, estimates on solutions to \eqref{eq:SDE_2} typically scale differently in the time and space variable, essentially due to the scaling behavior of the driving Brownian motions.
Thereby, a typical choice of the metric space $M$ would be $M=[0,T]\times V$ for $V\subset \R^d$ bounded, equipped with parabolic distance 
\[
d_M((t,x), (t',x')) \,=\, \max\{ |t-t'|^{1/2} ,|x-x'|\}.
\]
Then $M$ has Minkowski dimension $D=d+2$, since it can be covered by the product of $\sim r^{-2}$ subintervals of  $[0,T]$ with length less than or equal to $r^2$, and $\sim r^{-d}$ Euclidean balls covering $V$. One sees similarly that the doubling number of $(M,d_M)$ is finite. Thereby, a bound on \eqref{Eq91} results in an estimate on the left-hand side of \eqref{Eq92}, which is seen to be equivalent to  
\begin{equation}\label{Eq93}
\E\bigl[ \|
Z
\|_{C^{\beta/2,\beta} ([0,T] \times V;\R^d)}^p\bigr]^{1/p}.
\end{equation}

If, as in \cite{Kunita}, one wants to deduce properties of a random field defined on the whole space $Z\colon\Omega\times [0,T]\times \R^d \to\R^d$, one can of course apply Theorem \ref{Thm_KC} on a sequence of balls $V_1 \subset V_2 \subset \dots $ exhausting $\R^d$. Instead of an estimate  on \eqref{Eq93}, the result is then the existence of a modification $\tilde{Z}$ with $\tilde{Z}\in C_\loc^{\beta/2,\beta} ([0,T] \times \R^d;\R^d)$, $\P$-almost surely.

We proceed to state some of the results obtained in \cite{Kunita} for which we recall the set $\S$ from \eqref{Eq33} consisting of all admissible initial and terminal time instances. The following summarizes \cite[Theorem II.2.2, Theorem II.4.3]{Kunita}. 
\begin{theorem}\label{Thm_homeom}Under Assumption \ref{Ass:Lip} there exist modifications of $\xi_{s,\cdot}(x)$ such that a.s. the following is satisfied:
	\begin{enumerate}[(i)]
		\item The mapping
		\[
		\xi\colon 
		\S \times \R^d \to \R^d ,\quad (s,t,x) \mapsto \xi_{s,t}(x)
		\]
		lies in $C_{\loc}^{1/2-, 1-}(\S\times \R^d;\R^d)$. 
		\item The map 
		\begin{equation*}\label{Eq:homeomorph}
		\xi_{s,t}(\cdot) \colon \R^d \to \R^d
		\end{equation*}
		is a homeomorphism for all $(s,t)\in \S$.
		\item It holds the flow property, i.e.,
		\[
		\xi_{r,t}(x) \,=\, \xi_{s,t} ( \xi_{r,s} (x)),
		\]
		for all $0\le r\le s \le t\le T$ and $x\in \R^d$.
	\end{enumerate}
\end{theorem}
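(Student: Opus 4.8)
The plan is to establish Theorem~\ref{Thm_homeom} by combining the quantitative anisotropic Kolmogorov--Chentsov result (Theorem~\ref{Thm_KC}) with classical moment estimates for SDEs under Assumption~\ref{Ass:Lip}, and then to bootstrap the resulting joint continuity to the homeomorphism property via the flow identity. This is essentially a review and repackaging of \cite[Theorems II.2.2 and II.4.3]{Kunita}, so the emphasis is on making the ingredients fit the setting of infinitely many Brownian motions with coefficients depending only measurably on $(t,\omega)$.

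First I would record the standard a-priori moment bounds: for $p\in[2,\infty)$, using the Burkholder--Davis--Gundy inequality for $\ell^2$-valued stochastic integrals together with the linear-growth bound in \eqref{eq:lipschitz_cond}, one obtains $\sup_{t\in[s,T]}\E|\xi_{s,t}(x)|^p \lesssim_{p,T,C}(1+|x|)^p$; likewise, from the Lipschitz bound and Grönwall's inequality, $\E\bigl[\sup_{t\in[s,T]}|\xi_{s,t}(x)-\xi_{s,t}(y)|^p\bigr]\lesssim|x-y|^p$. To handle the dependence on the initial time $s$, I would write, for $s\le s'$, $\xi_{s',t}(x)-\xi_{s,t}(x)=\bigl(\xi_{s',t}(x)-\xi_{s',t}(\xi_{s,s'}(x))\bigr)$ using the flow property together with the fact that $\xi_{s,t}(x)=\xi_{s',t}(\xi_{s,s'}(x))$, and then combine the spatial Lipschitz estimate with the bound $\E|\xi_{s,s'}(x)-x|^p\lesssim|s-s'|^{p/2}(1+|x|)^p$ (again BDG plus linear growth). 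Similarly one gets time-increment bounds $\E|\xi_{s,t}(x)-\xi_{s,t'}(x)|^p\lesssim|t-t'|^{p/2}(1+|x|)^p$. Altogether this verifies, on each bounded set $V\subset\R^d$, a bound of the form \eqref{Eq91} with the parabolic metric $d_M((s,t,x),(s',t',x'))=\max\{|s-s'|^{1/2},|t-t'|^{1/2},|x-x'|\}$ on $M=\S\cap([0,T]^2)\times V$, whose Minkowski dimension is $D=d+4$ and whose doubling number is finite. Taking $p$ large enough, Theorem~\ref{Thm_KC} (applied on an exhausting sequence $V_1\subset V_2\subset\cdots$ as explained after \eqref{Eq93}) yields a modification $\xi\in C^{1/2-,1/2-,1-}_{\loc}$ in $(s,t,x)$ jointly, which in particular gives item (i).

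Next, item (iii): the flow property $\xi_{r,t}(x)=\xi_{s,t}(\xi_{r,s}(x))$ holds for fixed $r\le s\le t$ and fixed $x$ almost surely by pathwise uniqueness for \eqref{eq:SDE_2} (both sides solve the same SDE on $[s,T]$ with the same initial datum at time $s$); by the joint continuity from step~(i) the exceptional null set can be taken independent of $(r,s,t,x)$, so the identity holds a.s.\ simultaneously for all such tuples. Then item (ii): fixing the orientation of time, one shows that for $(s,t)\in\S$ the map $\xi_{s,t}(\cdot)$ is injective and surjective. Injectivity follows from a moment estimate on the \emph{reciprocal} of the difference, i.e.\ a bound of the form $\E\bigl[\sup_{t}|\xi_{s,t}(x)-\xi_{s,t}(y)|^{-p}\bigr]\lesssim|x-y|^{-p}$ obtained by applying It\^o's formula to $|\xi_{s,t}(x)-\xi_{s,t}(y)|^{-p}$ using the Lipschitz condition --- this is the classical Kunita argument --- which via Kolmogorov--Chentsov again prevents two distinct trajectories from colliding and also controls behavior as $|x|,|y|\to\infty$ (one-point-compactification / behavior at infinity argument), giving that $\xi_{s,t}$ extends to a homeomorphism of the one-point compactification fixing $\infty$, hence is a homeomorphism of $\R^d$. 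Surjectivity (equivalently, that the image is all of $\R^d$) then follows either from a degree-theory argument or, more in the spirit of the flow approach, by constructing the inverse directly as the backward flow $\Psi_{t,s}$ and invoking the flow property to see $\xi_{s,t}\circ\Psi_{t,s}=\id=\Psi_{t,s}\circ\xi_{s,t}$.

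The main obstacle I anticipate is the injectivity/non-collision estimate: deriving the negative-moment bound $\E\sup_t|\xi_{s,t}(x)-\xi_{s,t}(y)|^{-p}\lesssim|x-y|^{-p}$ requires care because It\^o's formula is being applied to the function $z\mapsto|z|^{-p}$, which is singular at the origin, so one must first argue that the difference process never hits $0$ (via a localization/stopping-time argument) and then control the resulting drift and quadratic-variation terms using only the Lipschitz bound in \eqref{eq:lipschitz_cond}, carefully tracking that the $\ell^2$-structure of $\sigma$ does not spoil the estimate. A secondary technical point is ensuring all the Kolmogorov--Chentsov applications are uniform enough across the exhausting balls $V_k$ so that the various null sets (for continuity, for the flow identity, for injectivity) can be intersected into a single set of full probability; this is routine but must be stated. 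Once these are in place, items (i)--(iii) follow as above, and one selects the continuous modification as the canonical representative, as stipulated in the notation section.
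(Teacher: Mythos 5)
Your proposal is correct and follows essentially the same route as the paper: the paper simply cites Kunita's Theorems II.2.2 and II.4.3 and observes in the ensuing remark that the combination of moment estimates under Assumption~\ref{Ass:Lip} with the Kolmogorov--Chentsov theorem (Theorem~\ref{Thm_KC}) carries over to infinitely many Brownian motions and coefficients that are random and merely measurable in time, which is precisely the argument you reconstruct (moment bounds, anisotropic Kolmogorov--Chentsov, flow property by pathwise uniqueness plus joint continuity, injectivity via negative-moment estimates, and the one-point-compactification argument for surjectivity). One small caution: the alternative route to surjectivity via the backward flow $\Psi_{t,s}$ that you mention in passing has adaptedness problems once the coefficients are random (the paper itself points this out in the remark after Proposition~\ref{Prop_Psi}), so in this setting you should rely on the compactification/invariance-of-domain argument you state first.
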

\begin{remark}
We emphasize that in \cite[Chapter II]{Kunita} the situation of finitely many Brownian motions $w^n$ is considered and the coefficients are assumed deterministic and to depend continuously on the time variable. An analysis of the proofs there, which rely on combining Lipschitz and linear growth estimates on the right-hand side of \eqref{eq:SDE_2} with the Kolmogorov--Chentsov theorem from \cite[Theorem I.10.1]{Kunita}, cf.\ Theorem \ref{Thm_KC}, reveals, however, that the results carry over to our situation.
\end{remark}

Under a more restrictive assumption on the coefficient functions, the statements of the previous theorem can be strengthened, see \cite[Theorem II.3.3, Theorem II.4.4]{Kunita}.
\begin{assumption}\label{Ass:Ck_alpha}
Let $k\in \N$ and $\alpha\in (0,1]$. We assume that there exists a constant $C\in (0,\infty)$ such that a.s. for all $t\in [0,T]$ and multiindices $0\le |\gamma|\le k$ the derivatives $\partial_\gamma \mu_t$ and $\partial_\gamma\sigma_t$  exist and satisfy
\begin{align}\begin{split} \label{Eq23}
|\partial_\gamma\mu_t(x) - \partial_\gamma\mu_t(y)|+ \|\partial_\gamma\sigma_t(x) - \partial_\gamma\sigma_t(y)\|_{\ell^2(\R^d)}\,&\leq\, C|x-y|^\alpha, \qquad |\gamma| =k,
\\
|\partial_\gamma \mu_t(x) | \,+\, \|\partial_\gamma \sigma_t(x)\|_{\ell^2(\R^d)} \,&\leq \,C,\qquad \qquad\quad\; \,1\le  |\gamma| <k,
\\
       |\mu_t(x) | \,+\, \|\sigma_t(x)\|_{\ell^2(\R^d)} \,&\leq \,C \bigl(1+|x|\bigr),
\end{split}
\end{align}
for  all $x,y\in \R^d$.
\end{assumption}

\begin{theorem}\label{Thm:Ck}
	We assume that Assumption \ref{Ass:Ck_alpha} holds for $k\in \N$ and $\alpha\in (0,1]$ and consider the modifications of $\xi_{s,\cdot}(x)$ from Theorem \ref{Thm_homeom}. Then, a.s., the following holds:
	\begin{enumerate}[(i)]
		\item \label{Item:regularity} The mapping 
		\[\xi\colon 
		\S \times \R^d \to \R^d ,(s,t,x) \mapsto \xi_{s,t}(x)
		\]
		lies in $C_{\loc}^{\alpha/2-, (k+\alpha)-}(\S\times \R^d; \R^d)$.
		\item \label{Item:Ck_diff} The map 
		\[
			\xi_{s,t}(\cdot) \colon \R^d \to \R^d
		\]
		is a $C^k$-diffeomorphism for all $(s,t)\in \S$.
	\end{enumerate}
\end{theorem}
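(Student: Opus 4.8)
\textbf{Proof plan for Theorem \ref{Thm:Ck}.}
The strategy is to upgrade the conclusions of Theorem \ref{Thm_homeom} by exploiting the extra regularity of $(\mu,\sigma)$ in Assumption \ref{Ass:Ck_alpha}, following the scheme of \cite[Chapter II]{Kunita}. For part \eqref{Item:regularity}, I would first establish that the derivatives $\partial_x^\gamma \xi_{s,t}(x)$ for $1\le |\gamma|\le k$ exist and are themselves solutions to linear SDEs obtained by formally differentiating \eqref{eq:SDE_2} in $x$. Concretely, the Jacobian $D\xi_{s,t}(x)$ solves
\begin{equation*}
\dd (D\xi_{s,t}(x)) \,=\, D\mu_t(\xi_{s,t}(x))\, D\xi_{s,t}(x)\,\dd t \,+\, \sum_{n\ge 1} D\sigma_{n,t}(\xi_{s,t}(x))\, D\xi_{s,t}(x) \,\dd w_t^n,\qquad D\xi_{s,s}(x) = \id,
\end{equation*}
and the higher derivatives satisfy analogous linear SDEs whose inhomogeneities are polynomial expressions in the lower-order derivatives and in $\partial_\gamma\mu_t, \partial_\gamma\sigma_t$ composed with $\xi_{s,t}(x)$ (a Fa\`a di Bruno type expansion). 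Since Assumption \ref{Ass:Ck_alpha} bounds all these coefficients (with $C^\alpha$-control at the top order), one obtains uniform $L^p$ moment bounds, for every $p<\infty$, on $\partial_\gamma\xi_{s,t}(x)$ locally in $x$, together with H\"older-type increment estimates of the form $\E[|\partial_\gamma\xi_{s,t}(x) - \partial_\gamma\xi_{s',t'}(x')|^p]^{1/p} \lesssim (|t-t'|^{1/2} + |x-x'|)^{\theta}$ for the relevant $\theta$ (and $\theta=\alpha$ at top order). Feeding these into the anisotropic Kolmogorov--Chentsov theorem (Theorem \ref{Thm_KC} with the parabolic metric, applied on an exhausting sequence of balls, as explained after \eqref{Eq93}) yields a modification with $\xi\in C^{\alpha/2-,(k+\alpha)-}_{\loc}(\S\times\R^d;\R^d)$; since these modifications agree a.s.\ with the continuous modification fixed in Theorem \ref{Thm_homeom}, the claim follows for that modification.

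For part \eqref{Item:Ck_diff}, I already know from Theorem \ref{Thm_homeom} that $\xi_{s,t}(\cdot)$ is a homeomorphism with inverse $\Psi_{t,s}=\xi_{s,t}^{-1}$, and from part \eqref{Item:regularity} that $\xi_{s,t}(\cdot)\in C^k$ with $D\xi_{s,t}(x)$ solving the linear SDE above. The matrix $D\xi_{s,t}(x)$ is invertible for every $(s,t,x)$: its inverse $\psi$ solves the adjoint linear SDE (in Stratonovich form $D\psi = -\psi\, D\mu\,\dd t - \sum_n \psi\, D\sigma_n\circ\dd w^n$, or the corresponding It\^o equation with an extra drift $\sum_n \psi (D\sigma_n)^2\,\dd t$), which has globally Lipschitz coefficients in the matrix variable on the relevant region, hence a global solution; uniqueness of solutions to the pair of coupled SDEs forces $\psi_{s,t}(x) D\xi_{s,t}(x) = \id$ a.s., and by continuity in $(s,t,x)$ this holds simultaneously for all $(s,t,x)$ off a single null set. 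Thus $D\xi_{s,t}(x)\in GL_d(\R)$ everywhere, and the inverse function theorem, applied to the $C^k$ bijection $\xi_{s,t}(\cdot)$, shows $\Psi_{t,s}=\xi_{s,t}^{-1}\in C^k$ with $D\Psi_{t,s}(\xi_{s,t}(x)) = (D\xi_{s,t}(x))^{-1}$; since a global homeomorphism that is a local $C^k$-diffeomorphism everywhere is a $C^k$-diffeomorphism, we conclude.

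The main obstacle is the first part: one must justify rigorously that the formal spatial derivatives of $\xi$ genuinely exist and satisfy the stated linear SDEs (rather than just obtaining bounds on difference quotients), and then obtain the moment and increment estimates for \emph{all} derivative orders $|\gamma|\le k$ with the correct scaling, uniformly on compacts in $x$ but only under linear growth of $(\mu,\sigma)$ themselves. This requires an induction on $|\gamma|$ together with a Gr\"onwall/BDG bootstrap in which the top-order estimate only uses the $C^\alpha$-bound on $\partial_\gamma\mu,\partial_\gamma\sigma$; one also has to handle the infinitely-many-Brownian-motions setting, where the $\ell^2(\R^d)$-summability of $\sigma$ and its derivatives (guaranteed by Assumption \ref{Ass:Ck_alpha}) is what makes the stochastic integrals and the Burkholder--Davis--Gundy estimates well-defined. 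All of this is carried out in \cite[Chapter II]{Kunita} for finitely many, time-continuous, deterministic coefficients, and --- exactly as noted in the Remark following Theorem \ref{Thm_homeom} --- the proofs are robust enough to cover progressively measurable coefficients driven by countably many $w^n$, so the adaptation is routine though notationally heavy.
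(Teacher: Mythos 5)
Your proposal follows essentially the same route as the paper: both reduce the theorem to \cite[Theorems II.3.1--II.3.3 and II.4.4]{Kunita} and observe that the proofs there --- moment/increment estimates fed into the Kolmogorov--Chentsov theorem, together with the invertibility of the Jacobian obtained from the linear SDE for $(D\xi)^{-1}$ --- carry over to progressively measurable, merely time-measurable coefficients driven by countably many Brownian motions, with the $\ell^2(\R^d)$-summability of $\sigma$ and its derivatives guaranteeing well-posedness of all stochastic integrals. The only organizational difference is that the paper (following Kunita) applies Kolmogorov--Chentsov directly to the difference-quotient field $\eta_{s,t}(x,y)=y^{-1}(\xi_{s,t}(x+ye_l)-\xi_{s,t}(x))$, so that existence of $\partial_\gamma\xi$ and its joint space-time H\"older regularity emerge in one step, whereas you propose to first construct the derivative SDEs and then estimate and correctly flag the existence step as the technical crux --- but since you ultimately defer that crux to Kunita's Chapter II as well, the two plans are the same in substance.
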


\begin{remark}
	 As for Theorem \ref{Thm_homeom}, the fact that we are dealing with infinitely many Brownian motions and coefficients which are random and only measurable in time does not affect the analysis, and hence \cite[Theorem II.4.4]{Kunita} yields \eqref{Item:Ck_diff}. Concerning \eqref{Item:regularity} some additional remarks are in order:
	The spatial regularity is stated in  \cite[Theorem II.3.3]{Kunita}. To obtain also the temporal regularity, we consider first the situation that $k=1$, in which case the spatial regularity is shown in \cite[Theorem II.3.1]{Kunita} by applying the Kolmogorov--Chentsov theorem \cite[Theorem I.10.1]{Kunita}, cf.\ Theorem \ref{Thm_KC},  to the random field
	\begin{align}\label{eq:defi_eta}
		\eta_{s,t}(x,y) =  \frac{1}{y} \bigl(
		\xi_{s,t}(x+ye_l) - \xi_{s,t}(x)
		\bigr),\qquad y\,\ne\, 0,
	\end{align}
to deduce that it admits a.s.\ a continuous extension at $y=0$. For this
the estimate 
\begin{align}\begin{split}\label{Eq_est_eta}&
	\E \bigl[
	\bigl|
	\eta_{s,t}(x,y) - \eta_{s',t'}(x',y')
	 \bigr|^p
	\bigr]
	\\&\quad 
	\lesssim_{(\alpha,p, C_{\mathrm{reg}}, T)}
	|x-x'|^{\alpha p} +|y-y'|^{\alpha p } +(1+|x|+|x'|)^{\alpha p} \bigl(
	|s-s'|^{\alpha p /2} +|t-t'|^{\alpha p /2}
	\bigr),
    \end{split}
\end{align}
for $p\in (2,\infty)$
is proved in \cite[Lemma II.3.2]{Kunita}, where $C_{\mathrm{reg}}=C$ is the constant for which the conditions \eqref{Eq23} in Assumption \ref{Ass:Ck_alpha} hold. 
 Since $p$ can be chosen large, the Kolmogorov--Chentsov theorem  yields the existence of versions of $\eta_{s,t}(x,y)$ such that a.s.\
\[
\eta \in C_{\loc}^{\beta/2, \beta, \beta}(\S\times \R^d\times \R ),\qquad \beta<\alpha.
\]
In particular, $\eta_{\cdot, \cdot}(\cdot , 0)$ lies in $C_{\loc}^{\beta/2, \beta}(\S\times \R^d )$ for each $\beta<\alpha$ resulting in the claimed regularity $\xi \in C_{\loc}^{\alpha/2-,(1+\alpha)-}(\S\times \R^d)$ of $\xi$.
For $k>1$, the same line of argument is applied to the derivative processes $\partial_{\gamma} \xi$. In particular, the Kolmogorov--Chentsov theorem yields again temporal regularity as a byproduct, resulting in the asserted $\xi \in C_{\loc}^{\alpha/2-, (k+\alpha)-}(\S\times \R^d)$.
\end{remark}

Later we also require  H\"older continuity of the backwards flow $\Psi_{t,s}$ defined by the inverse mappings $\xi_{s,t }^{-1}\colon \R^d\to \R^d$, for $(t,s) \in \invS$. We remark that at least for $(t,s)$ fixed, spatial regularity follows from Theorem \ref{Thm:Ck} \eqref{Item:Ck_diff} under Assumption \ref{Ass:Ck_alpha}, so that the point of the following result is to also deduce H\"older continuity in the time variables.

\begin{proposition}\label{Prop_Psi}
	We assume that Assumption \ref{Ass:Ck_alpha} holds for $k=1$ and $\alpha\in (0,1]$. Then, $\P$-a.s., the inverse flow 
	\[
	\Psi \colon \invS \times  \R^d \to \R^d, (t,s,y) \mapsto \Psi_{t,s}(y) \coloneq \xi_{s,t}^{-1}(y)
	\]
	lies in $C_{\loc}^{\alpha/2-,1}(\invS\times \R^d ;\R^d)$.
\end{proposition}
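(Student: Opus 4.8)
\textbf{Proof strategy for Proposition \ref{Prop_Psi}.}
The plan is to obtain the regularity of $\Psi$ from the already-established regularity of the forward flow $\xi$ (Theorem \ref{Thm:Ck} with $k=1$) by an inverse-function-theorem type argument combined with a quantitative Kolmogorov--Chentsov estimate, exactly the scheme laid out after Theorem \ref{Thm_KC}. The spatial $C^1$-regularity for fixed $(t,s)$ is already contained in Theorem \ref{Thm:Ck}\eqref{Item:Ck_diff}, so the genuine new content is H\"older continuity in the time variables $(t,s)$, and the claim is that the natural exponent there is $\alpha/2$ (up to the usual loss).

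First I would fix a bounded set $V\subset \R^d$ and work on the metric space $M = \invS \times V$ equipped with the parabolic distance $d_M((t,s,y),(t',s',y')) = \max\{|t-t'|^{1/2},|s-s'|^{1/2},|y-y'|\}$, which has finite diameter, Minkowski dimension $D=d+4$, and finite doubling number, so Theorem \ref{Thm_KC} applies. The key is to bound increments of $\Psi$. Writing $y = \xi_{s,t}(\Psi_{t,s}(y))$ and $y = \xi_{s',t'}(\Psi_{t',s'}(y))$, subtract and add a mixed term:
\begin{equation*}
0 \,=\, \xi_{s',t'}(\Psi_{t',s'}(y)) - \xi_{s',t'}(\Psi_{t,s}(y)) \,+\, \xi_{s',t'}(\Psi_{t,s}(y)) - \xi_{s,t}(\Psi_{t,s}(y)).
\end{equation*}
The second difference is a space-time increment of $\xi$ evaluated at the point $\Psi_{t,s}(y)$, which by Theorem \ref{Thm:Ck}\eqref{Item:regularity} and the estimate in its proof is of order $(|t-t'|^{1/2}+|s-s'|^{1/2})$ in $L^p$, with a factor controlled by $1+|\Psi_{t,s}(y)|$. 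The first difference I rewrite, using the fundamental theorem of calculus, as $\big(\int_0^1 D\xi_{s',t'}(\theta\Psi_{t',s'}(y)+(1-\theta)\Psi_{t,s}(y))\,\dd\theta\big)\,(\Psi_{t',s'}(y)-\Psi_{t,s}(y))$; inverting the (invertible, by Theorem \ref{Thm:Ck}\eqref{Item:Ck_diff}) matrix in parentheses then yields the fixed-point-type identity
\begin{equation*}
\Psi_{t',s'}(y)-\Psi_{t,s}(y) \,=\, -\Big(\textstyle\int_0^1 D\xi_{s',t'}(\cdots)\,\dd\theta\Big)^{-1}\big(\xi_{s',t'}(\Psi_{t,s}(y)) - \xi_{s,t}(\Psi_{t,s}(y))\big).
\end{equation*}
To turn this into an $L^p$ bound on $\Psi_{t',s'}(y)-\Psi_{t,s}(y)$, one needs $L^p$-control of the inverse matrix norm and of $1+|\Psi_{t,s}(y)|$, uniformly over the compact parameter set; the former follows from Kunita's estimates on $D\xi$ and $(D\xi)^{-1}$ (the latter also being standard in \cite{Kunita}, or derivable from the equation for the Jacobian and Gr\"onwall), and the moment bound on $\Psi_{t,s}(y)$ over $(t,s)\in\invS$, $y\in V$ comes from linear growth together with the fact that $\Psi$ solves a backward SDE of the same type. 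Feeding the resulting bound of the form $\E|\Psi_{t',s'}(y)-\Psi_{t,s}(y)|^p \lesssim (|t-t'|^{1/2}+|s-s'|^{1/2}+|y-y'|)^{p}$ (the $|y-y'|$ contribution being trivial since $\Psi_{t,s}$ is Lipschitz on bounded sets with $L^p$-controlled constant) into Theorem \ref{Thm_KC} gives a modification with $\Psi\in C^{\beta/2,\beta/2,\beta}_{\loc}$ for every $\beta<1$ on $M$, i.e. $\Psi\in C_{\loc}^{\alpha/2-,1}$ after observing the exponent in the time variables is genuinely $1/2-$ while in space it is the full Lipschitz regularity $1$ (one can do slightly better than $\beta<1$ in $y$ directly, since $D\xi^{-1}$ is continuous). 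Exhausting $\R^d$ by balls $V_1\subset V_2\subset\cdots$ and patching the (a.s.\ unique) continuous modifications yields the global statement.

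The main obstacle I expect is the uniform-in-parameters moment control of the auxiliary quantities $\|(D\xi_{s',t'})^{-1}\|$ and $1+|\Psi_{t,s}(y)|$ appearing as multipliers: one must be careful that these are bounded in $L^p$ for $p$ arbitrarily large (needed to absorb the Minkowski dimension $D=d+4$ in Theorem \ref{Thm_KC}) and uniformly over $(t,s)$ in the compact triangle $\invS$ and $y$ in a bounded set, rather than merely for fixed times. This is where one leans on the polynomial moment bounds that are built into Kunita's construction of the flow; the periodic refinements of Subsection \ref{sec:periodic_flows} are not needed here since $\invS\times V$ is already compact and Assumption \ref{Ass:Ck_alpha} suffices. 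A secondary technical point is that the mixed-term decomposition must be arranged so that $D\xi$ is evaluated along the segment between $\Psi_{t,s}(y)$ and $\Psi_{t',s'}(y)$, which a priori involves the unknown increment; this is harmless because invertibility of $D\xi_{s',t'}$ holds pointwise everywhere, so the integral-average matrix is still invertible with the same quantitative bounds, and no smallness of the increment is required.
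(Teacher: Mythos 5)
Your approach is fundamentally different from the paper's: you aim for $L^p$-moment bounds on increments of $\Psi$ followed by the Kolmogorov--Chentsov theorem, whereas the paper's proof is entirely pathwise (it fixes $\omega$ outside a null set, uses only the almost-sure regularity of $\xi$ from Theorem \ref{Thm:Ck}, and derives continuity first via a contraction-mapping step, then H\"older continuity via an a-priori estimate that absorbs an error term once the increment is known to be small). Unfortunately, two of the claims carrying your argument do not hold. First, the assertion that ``invertibility of $D\xi_{s',t'}$ holds pointwise everywhere, so the integral-average matrix is still invertible with the same quantitative bounds, and no smallness of the increment is required'' is false: an average of matrices in $GL^+(d)$ need not be invertible (e.g.\ two planar rotations by angles close to $\pm\pi/2$ both have determinant $1$ but their average is nearly singular), and even when it is, there is no a priori control of its inverse norm without knowing that the two endpoints $\Psi_{t,s}(y)$ and $\Psi_{t',s'}(y)$ are close. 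This is precisely the circularity the paper breaks by proving continuity of $\Psi$ first (Steps~1--2), and then, in Step~3, writing the increment against the fixed, well-controlled matrix $D\xi_{0,t}(\Psi_t(y))$ and showing that the resulting error term $\I_2$ is $\le\frac12|\Psi_t(y)-\Psi_{t'}(y')|$ once $(t',y')$ is near $(t,y)$, so that it can be absorbed. Your decomposition cannot begin before some smallness of the increment is already established.

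Second, your route to the moment bounds on $1+|\Psi_{t,s}(y)|$ invokes ``the fact that $\Psi$ solves a backward SDE of the same type.'' The paper explicitly flags in the Remark following Proposition \ref{Prop_Psi} that this device is unavailable here: when the coefficients $(\mu,\sigma)$ are $(t,\omega)$-dependent (which is the standing setting), the backward equation for $\Psi$ runs into adaptedness issues, and this is exactly why a new argument was needed. More generally, the quantities you want to control in $L^p$ uniformly over $(t,s)\in\invS$ and $y\in V$ --- in particular $\sup_{t,s,y}\|(D\xi_{s,t})^{-1}\|$ and $\sup_{t,s,y}|\Psi_{t,s}(y)|$ --- are not routine consequences of Assumption \ref{Ass:Ck_alpha} on the whole space; the paper only obtains such quantitative moment bounds later, in the periodic setting (Propositions \ref{prop:periodic_flows} and \ref{cor:quant_inverse}), by a separate, nontrivial argument. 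Proposition \ref{Prop_Psi} as stated is deliberately qualitative for exactly this reason, and a Kolmogorov--Chentsov proof of it would require you to first resolve both of the issues above, at which point you would essentially be redoing the quantitative periodic result in the harder non-compact setting.
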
 
\begin{remark}
    In the deterministic case, the natural barrier $1/2$ for the temporal regularity of $\xi$ is not present, and thus the argument below can be significantly simplified. 
    Indeed, if $\xi_{s,t}$ is for all times $(s,t)$ a diffeomorphism, then the Jacobian $D\xi_{s,t}(x)$ is invertible for all $(s,t,x)\in \S\times \R^d$. But if it is also continuously differentiable in $(s,t)$, then the continuously differentiable  $(s,t,x)\mapsto (s,t,\xi_{0,t}(x))$  has invertible Jacobian 
    \[
   \left( \begin{matrix}
        1 &0 & 0\\
        0 &1 & 0\\
        \partial_s \xi_{s,t}(x)  &
        \partial_t \xi_{s,t}(x) & D \xi_{s,t}(x)
    \end{matrix}\right)
    \]and therefore is a diffeomorphism itself with inverse $(s,t,y)\mapsto (s,t,\Psi_{t,s}(y))$. 
    As a result we readily  conclude that $\Psi\in C_{\loc}^{1,1}(\invS\times \R^d;\R^d)$ in the deterministic setting. 
    As soon as the temporal regularity of $\xi$ is, however, less than $1$, this line of argument breaks down since H\"older regularity is not preserved under taking inverses (consider, e.g., $[0,1] \mapsto [0,1], t\mapsto t^2$). 
    
    Alternatively, one can also use in the deterministic case that the inverses $\Psi_{t,s}$ satisfy the same equation backwards in time, which  transfers to the stochastic setting if the coefficients do not depend on $\omega$, as is shown in \cite{BW_Kunita,kunita_jump_book}.  If the latter is not satisfied, however, issues related to adaptedness arise so that neither of these methods are applicable in our situation. We resolve the situation in the proof below by instead deriving an a priori-type estimate on the inverse flow $\Psi$. This approach can even be made quantitative in the periodic setting, see Proposition \ref{cor:quant_inverse} below.     
\end{remark}
\begin{proof}[Proof of Proposition \ref{Prop_Psi}]
	We essentially aim to show that as a consequence of the H\"older continuous dependence of the homeomorphisms $\xi_{s,t}$ on $(s,t)$, also their inverses depend H\"older continuously on these variables. As this is essentially a deterministic argument,  we allow throughout the proof all constants to depend on $T$ and  $\omega$, which we choose such that all almost sure properties of $\xi$ that we use hold.  The only additional property of $\xi$ that we need is that, $\P$-a.s.\ the inverse Jacobian
	\begin{equation}\label{Eq15}(
	D \xi_{(\cdot,\cdot)})^{-1}(\cdot) \colon \S\times \R^d \to \R^{d\times d}
	\end{equation}
	depends continuously on $(s,t,x)$,  as shown in the last step in the proof of \cite[Theorem II.4.4]{Kunita}. Let us also remark that $\Psi$ has the backwards flow property that 
	\begin{equation}\label{BW_flow}
		\Psi_{t,r}(y) \,=\, \Psi_{s,r}   (\Psi_{t,s}(y) ),\qquad 0\le r\le s \le t\le T,\,y\in \R^d,
	\end{equation}
	as a consequence of the flow property of $\xi$. Applying $\xi_{0,s}$ to the above with $r=0$ yields 
	\[
	\Psi_{t,s}(y) \,=\, \xi_{0,s} (\Psi_{t,0}(y)),
	\] 
	and thereby it suffices to show that $\Psi_{\cdot,0 }\in C_{\loc}^{\alpha/2-,1}([0,T]\times \R^d ;\R^d)$ by the regularity of $\xi$ stated in Theorem \ref{Thm:Ck}. To this end, we recall the notation $\Psi_t = \Psi_{t,0} =\xi_t^{-1}$ and for later use that
	\begin{equation}\label{Eq_Felix}
		\Psi_t ( \xi_{s,t}(x)) \,=\, \Psi_s(x),
	\end{equation}
	as 
	follows from \eqref{BW_flow} with $r=0$ evaluated at $ \xi_{s,t}(x)$. We first show that $\Psi_t(y)$ is continuous in $(t,y)$ and then the desired H\"older continuity in the following three steps.

	\emph{Step 1 (Continuity at $(t,y)$---Reduction). } 
	Now we consider the one-parameter family $\Psi_t$ evaluated at two time instances $t$ and $t'$ and consider first the case that $t'\le t$. 
	Then, for any $y,y'\in \R^d$, we obtain from \eqref{Eq_Felix} that
	\[
	\Psi_{t'}(y') - \Psi_{t}(y) = \Psi_{t}( \xi_{t',t}(y'))  - \Psi_{t}(y ).
	\]
	The latter converges to $0$ as $(t',y')\to (t,y)$ by the space-time continuity of $\xi$ stated in Theorem \ref{Thm:Ck} \eqref{Item:regularity}, the fact that $\xi_{t,t}=\id$ and the continuity of $\Psi_t$ in the space variable.
	If on the contrary  $t\le  t'$, we deduce 
	\[
	\Psi_{t'}(y') - \Psi_{t}(y) \,=\,	\Psi_{t}  ( \Psi_{t',t}(y') ) -  \Psi_{t} (y),
	\]
	from \eqref{BW_flow} 
	and hence it suffices to show that  $ \Psi_{t',t}(y') \to  y$ as $(t',y')\to (t,y)$ due to continuity of $\Psi_t$ in space.
	To this end, we let $\epsilon>0$ and claim that there exists a $\delta>0$ such that the mapping 
	\begin{equation}\label{Eq2}
	f_{t',y'}\colon \widebar{B_{\epsilon/2}(y)} \to \widebar{B_{\epsilon/2}(y)}, \quad x \mapsto x  - (\xi_{t,t'}(x) -y')
	\end{equation}
	is a Lipschitz contraction
	as soon as $|t'-t|  + |y' - y|<\delta$. This implies, by the contraction mapping principle, that it admits a fixed point, which must coincide with $\Psi_{t',t}(y')$ 
	by the definition of $f_{t',y'}$. As a result, it follows that $|\Psi_{t',t}(y')-y|\le \epsilon/2 <\epsilon$ for $|t'-t|  + |y' - y|<\delta $, and by the above considerations the continuity of $\Psi$.
	
	\emph{Step 2 (Continuity at $(t,y)$---Claims on \eqref{Eq2}).}
	It remains to verify that $f_{t',y'}$ is a contraction and maps  $\widebar{B_{\epsilon/2}(y)}$ into itself for 
	\[
	|t'-t|  + |y' - y|<\delta,
	\]
	where we specify $\delta>0$ below. Regarding the contractivity, we notice that
	\begin{align}\begin{split}
			\label{Eq3}
		&|f_{t',y'} (x) - f_{t',y'} (x')| 
		\\&\quad  = \bigl|
		x-x' - (\xi_{t,t'}(x) - \xi_{t,t'}(x'))
		\bigr|
			\\&\quad 
			 = \biggl|
			x-x' - \int_{0}^1 D\xi_{t,t'}(x' + r(x-x'))(x-x') \, \dd r
			\biggr|
				\\&\quad 
			= \biggl| \int_{0}^1\bigl( D\xi_{t,t'}(x' + r(x-x')) - \id \bigr)(x-x')\, \dd r
			\biggr|,
		\end{split}
	\end{align} for $x,x'\in \R^d $
	by the fundamental theorem of calculus and the differentiability of $\xi$ in space, cf.\ Theorem \ref{Thm:Ck} \eqref{Item:Ck_diff}.
	Restricting ourselves to $x,x' \in \widebar{B_{\epsilon/2}(y)}$, we use that 
	$\xi\in C_{\loc}^{\beta/2, (1+\beta)}(\S\times \R^d)$
	for $\beta<\alpha$ by Theorem \ref{Thm:Ck} \eqref{Item:regularity} to deduce that
	\[
	\bigl| D\xi_{t,t'}(x' + r(x-x')) - \id \bigr| \le C_{\epsilon,y} |t'-t|^{\beta/2} 
	\]
	Inserting this in \eqref{Eq3} results in the estimate
	\[
	|f_{t',y'} (x) - f_{t',y'} (x')| \le C_{\epsilon,y} |t'-t|^{\beta/2}  |x-x'|,
	\]
	which becomes a contraction estimate, e.g., for $|t'-t|<( 2 C_{\epsilon,y})^{-2/\beta}$.
	To assure also the required mapping property $f_{t',y'}\colon \widebar{B_{\epsilon/2}(y)} \to \widebar{B_{\epsilon/2}(y)}$, we calculate
	\[
	|f_{t',y'}(y)-y| \,=\, |\xi_{t,t'}(y) - y'|
	\,\le\, |\xi_{t,t'}(y) - y| + |y-y'| \,\le\, C_{y}
	|t'-t|^{\beta/2} + |y-y'|.
	\]
	Whenever $|t-t'|< (\epsilon/(8C_{y}))^{2/\beta}$ and $|y-y'|< \epsilon/8$, the above can be estimated by $\epsilon/4$.
	Finally, we set 
	\[
	\delta =\min \bigl\{
	( 2 C_{\epsilon,y})^{-2/\beta}, (\epsilon/(8C_{\epsilon,y}))^{2/\beta}, \epsilon/8
	\bigr\},
	\]
	so that all the previous estimates hold. Then 
	\[
	|f_{t',y'}(x)-y|\, \le\, |f_{t',y'}(x)-f_{t',y'}(y)|\, +\, |f_{t',y'}(y) - y|\, <\, \tfrac{1}{2}|x-y| \,+ \,\epsilon/4 \,<\, \epsilon/2
	\]
	for $x\in \widebar{B_{\epsilon/2}(y)}$ and therefore $f_{t',y'}(x)\in \widebar{B_{\epsilon/2}(y)}$. We conclude that $\Psi$ is continuous at $(t,y)$ as desired. 
	
	\emph{Step 3 (H\"older continuity).}
	As a last step, we show that even $\Psi\in C_{\loc}^{\alpha/2-,1}([0,T]\times \R^d)$ and let $(t_0,y_0)\in [0,T]\times \R^d$ so that it suffices to show that there exists $\delta>0$ with \begin{equation}\label{Eq9}\Psi \in C^{\beta/2, 1}(\zeta_\delta(t_0,y_0)),\quad \zeta_\delta(t_0,y_0) = {[0\vee( t_0-\delta), T\wedge (t_0+\delta)] \times B_\delta(y_0)}, 
	\end{equation}
	for all $\beta \in (0,\alpha)$. We allow the parameter $\delta>0$ to depend on $(t_0,y_0)$ since the H\"older constants can be chosen uniformly on bounded subsets of $[0,T]\times \R^d$ by compactness. For now we just fix some $\delta_0>0$ and remark that $\Psi (\zeta_{\delta_0}(t_0,y_0))\subset \R^d$ is bounded by the continuity of $\Psi$ shown in the previous steps.  
    Recalling that \eqref{Eq15} depends continuously on $(s,t,x)$, we obtain for $(t,y), (t',y') \in \zeta_{\delta_0}(t_0,y_0)$ and $r\in [0,1]$ that
	\begin{align}&\label{Eq4}
		|(D\xi_{0,t})^{-1}(\Psi_t(y))| \le C_{{\delta_0},t_0,y_0}
		\\&\label{Eq6}
		|D\xi_{0,t}(\Psi_{t'}(y') 
		+r(\Psi_{t}(y) - \Psi_{t'}(y'))
		) - D\xi_{0,t}(\Psi_t(y))  | \le  C_{{\delta_0},t_0,y_0}|\Psi_{t'}(y') - \Psi_t(y)|^\beta
		\\&\label{Eq5}
		|\xi_{0,t}(\Psi_{t'}(y')) - \xi_{0,t'}(\Psi_{t'}(y'))| \le C_{{\delta_0},t_0,y_0}|t'-t|^{\beta/2}
	\end{align}
	for a constant $C_{{\delta_0},t_0,y_0}$ when invoking also that $\xi\in C_{\loc}^{\beta/2, (1+\beta)}(\S\times \R^d)$, cf.\
	 Theorem \ref{Thm:Ck} \eqref{Item:regularity}. 
	 To proceed, we trivially write 
	 \begin{align*}&
	 	\Psi_{t}(y)
	 	= - (D\xi_{0,t})^{-1}(\Psi_t(y))
	 	\bigl[
	 	\xi_{0,t}(\Psi_t(y)) -y  - D\xi_{0,t}(\Psi_t(y))[\Psi_t(y)]
	 	\bigr],
	 	\\&
	 			\Psi_{t'}(y')
	 		= - (D\xi_{0,t})^{-1}(\Psi_t(y))
	 		\bigl[
	 		\xi_{0,t'}(\Psi_t'(y')) -y'  - D\xi_{0,t}(\Psi_t(y))[\Psi_{t'}(y')]
	 		\bigr],
	 \end{align*}
 	where $(D\xi_{0,t})^{-1}(\Psi_t(y))$ is the evaluation of  \eqref{Eq15} at $\Psi_t(y)$, i.e., the inverse of the matrix $D\xi_{0,t}(\Psi_t(y))$, and the squared brackets stand for the application of a matrix to a vector. Accordingly, we  expand
 	\begin{align}\begin{split}\label{Eq8}
 			&
 		\Psi_t(y) - \Psi_{t'}(y')\, =\,-
 		 (D\xi_{0,t})^{-1}(\Psi_t(y))
 		\bigl[
 		(y'-y) +(
 		\xi_{0,t}(\Psi_{t'}(y')) - \xi_{0,t'}(\Psi_{t'}(y'))
 		) 
 		\bigr]
 		\\&
 		-  (D\xi_{0,t})^{-1}(\Psi_t(y))
 		\bigl[
 		\xi_{0,t}(\Psi_{t}(y)) - \xi_{0,t}(\Psi_{t'}(y'))
 		- D\xi_{0,t}(\Psi_t(y)) [\Psi_{t}(y) - \Psi_{t'}(y') ]
 		\bigr]
 		\\& = \I_1 + \I_2 
 		\end{split}
 	\end{align}
 	and by \eqref{Eq4} and \eqref{Eq5}, we can estimate
 	\begin{equation}\label{Eq7}
 	|\I_1| \le 
 	C_{{\delta_0},t_0,y_0}\bigl(
 	|y'-y| + C_{{\delta_0},t_0,y_0}|t'-t|^{\beta/2}
 	\bigr).
 	\end{equation}
 	For the remaining term, we apply the fundamental theorem of calculus, \eqref{Eq4} and \eqref{Eq6} to bound
 	\begin{align*}
	|\I_2|
	&
	 \le C_{{\delta_0},t_0,y_0} \biggl|\int_0^1 \bigl(
	D\xi_{0,t}(\Psi_{t'}(y')+r (\Psi_t(y) -\Psi_{t'}(y'))) - 
	D\xi_{0,t}(\Psi_{t}(y)) \bigr)[\Psi_{t}(y) - \Psi_{t'}(y') ]
	\dd r
	\biggr|
	\\&
	\le C_{{\delta_0},t_0,y_0}^2 | 
	\Psi_{t}(y) - \Psi_{t'}(y')
	|^{1+\beta}.
 	\end{align*} 
	By the continuity of $\Psi$, we can choose $\delta\in (0,\delta_0)$ such that
	\[
	\Psi(\zeta_{\delta}(t_0,y_0)) \subset B_{\epsilon_0}(\Psi_{t_0}(y_0)),\quad 
	\epsilon_0 = \frac{1}{2}\bigl( 2C_{{\delta_0},t_0,y_0}^2 \bigr)^{ -1/\beta},
	\]
	and hence 
	\[
	| 
	\Psi_{t}(y) - \Psi_{t'}(y')
	|^{\beta} \le \bigl({2 C_{{\delta_0},t_0,y_0}^2}
	\bigr)^{-1}\]
	if additionally $(y,t), (y',t') \in \zeta_{\delta}(t_0,y_0)$. In this case, we find
	\[
	|\I_2| \le \tfrac{1}{2} 	| 
	\Psi_{t}(y) - \Psi_{t'}(y')
	|
	\]
	and inserting also \eqref{Eq7} in \eqref{Eq8}, we deduce that
	\[
	\tfrac{1}{2} 	| 
	\Psi_{t}(y) - \Psi_{t'}(y')
	| \le 	C_{{\delta_0},t_0,y_0}\bigl(
	|y'-y| + C_{{\delta_0},t_0,y_0}|t'-t|^{\beta/2}
	\bigr).
	\]
	Hence, \eqref{Eq9} follows, and the proof is complete.
\end{proof}
\subsection{Periodic stochastic flows}\label{sec:periodic_flows} With our goal in mind to apply the theory laid out in the last subsection to stochastic PDEs on a periodic domain, we collect possible improvements in the periodic setting. Most notably due to the compactness of $\T^d$, the qualitative results from Theorem \ref{Thm_homeom}, Theorem \ref{Thm:Ck}, and Proposition \ref{Prop_Psi} can be cast in quantitative form.  
To this end, we identify coefficient functions for a stochastic flow on $\T^d$ with their periodic extensions satisfying
\begin{align}\label{eq:periodicity_assumption} \mu_t(x+j)\, =\, \mu_t(x),\qquad \sigma_t(x+j) \,=\,\sigma_t(x),\qquad j\in \Z^d,\, x\in \R^d.\end{align} 
Under Assumption \ref{Ass:Lip}, by Theorem \ref{Thm_homeom}, there exists then a stochastic flow of homeomorphisms, which is periodic itself  due to pathwise uniqueness for \eqref{eq:SDE_2}: a.s.,
\begin{equation*}\label{eq:periodic_xi_copy}
		\xi_t(x+j) \,=\, \xi_t(x) + j, \qquad j\in \Z^d,\,x\in \R^d, \, t\in [0,T].
\end{equation*}
Thereby, it induces a.s.\ a homeomorphism of the torus $\xi_t\colon \T^d \to\T^d$, for all $t\in [0,T]$.
If moreover Assumption \ref{Ass:Ck_alpha} holds for $k\ge 1$ we have additionally that $
D\xi(x) = D\xi(x+j)$ for $j\in \Z^d$ and all its higher derivatives. Due to periodicity of $\xi$, it suffices in the following to estimate its H\"older seminorm on a sufficiently large bounded subset of $V\subset \R^d$.
\begin{proposition}\label{prop:periodic_flows_I}
    Let Assumption \ref{Ass:Lip} and the periodicity condition \eqref{eq:periodicity_assumption} be satisfied, 
    and $C_{\mathrm{reg}}$ be a constant such that \eqref{eq:lipschitz_cond} holds. 
    Then the stochastic flow of diffeomorphisms satisfies
    the bound 
    \begin{equation}
        \E \bigl[
       \| \xi \|_{C^{\beta/2,\beta}([0,T]\times V; \R^d)}^p
        \bigr]\,\le\, K_{{(\beta, C_{\mathrm{reg}}, p,V, T)}},
    \end{equation}
    for each $\beta\in (0,1)$, $p\in (1,\infty)$ and bounded $V\subset \R^d$.
\end{proposition}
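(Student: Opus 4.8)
The plan is to obtain the estimate as a consequence of the quantitative Kolmogorov--Chentsov theorem (Theorem \ref{Thm_KC}) applied to the random field $Z=\xi$ on the metric space $M=[0,T]\times V$ equipped with the parabolic distance $d_M((t,x),(t',x'))=\max\{|t-t'|^{1/2},|x-x'|\}$. As explained in the discussion following Theorem \ref{Thm_KC}, this $M$ has finite diameter (since $[0,T]$ and $V$ are bounded), finite Minkowski dimension $D=d+2$, and finite doubling number, and the metric-space H\"older seminorm on $M$ coincides, up to constants, with the anisotropic seminorm $[\,\cdot\,]_{C^{\beta/2,\beta}([0,T]\times V)}$ appearing in the statement; thus the only substantial point is to verify the moment bound \eqref{Eq91} for the one-parameter flow $\xi_{0,\cdot}(\cdot)$, after which \eqref{Eq92} turns directly into the asserted inequality (the $\sup$-part of \eqref{Eq92} controlling the $C([0,T]\times V)$-part of the norm).

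First I would record the two classical $L^p$-bounds for solutions of \eqref{eq:SDE_2} with $s=0$: for every $p\in(2,\infty)$ and $x,x'\in\R^d$,
\[
\E\Bigl[\sup_{t\in[0,T]}|\xi_t(x)|^p\Bigr]\,\lesssim_{(p,C_{\mathrm{reg}},T)}\,1+|x|^p,
\qquad
\E\Bigl[\sup_{t\in[0,T]}|\xi_t(x)-\xi_t(x')|^p\Bigr]\,\lesssim_{(p,C_{\mathrm{reg}},T)}\,|x-x'|^p.
\]
Both follow in the standard way from the Burkholder--Davis--Gundy inequality for the $\ell^2$-valued stochastic integral in \eqref{eq:SDE_2}, the linear growth resp.\ Lipschitz bound in \eqref{eq:lipschitz_cond}, and Gronwall's lemma; restricting to $x\in V$ makes the first bound uniform in $x$. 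For the temporal increment of a fixed trajectory, writing $\xi_t(x')-\xi_{t'}(x')=\int_{t'}^t\mu_r(\xi_r(x'))\,\dd r+\sum_{n\ge1}\int_{t'}^t\sigma_{n,r}(\xi_r(x'))\,\dd w^n_r$ for $t'\le t$, applying BDG to the martingale part, the linear growth bound, and the first display, and using $|t-t'|\le T$ to dominate the drift contribution of order $|t-t'|^p$ by one of order $|t-t'|^{p/2}$, yields for $x'\in V$
\[
\E\bigl[|\xi_t(x')-\xi_{t'}(x')|^p\bigr]\,\lesssim_{(p,C_{\mathrm{reg}},T,V)}\,|t-t'|^{p/2}.
\]
Splitting $\xi_t(x)-\xi_{t'}(x')=(\xi_t(x)-\xi_t(x'))+(\xi_t(x')-\xi_{t'}(x'))$ and combining the three displays gives, for all $(t,x),(t',x')\in[0,T]\times V$,
\[
\E\bigl[|\xi_t(x)-\xi_{t'}(x')|^p\bigr]\,\lesssim_{(p,C_{\mathrm{reg}},T,V)}\,|x-x'|^p+|t-t'|^{p/2}\,\le\,2\,d_M\bigl((t,x),(t',x')\bigr)^p.
\]

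Given the target $\beta\in(0,1)$ and $p\in(1,\infty)$, I would then fix an auxiliary exponent $q\in(D,\infty)$ with $q\ge p$ and $D/q<1-\beta$, and choose $\alpha\in(\beta+D/q,1)$, so that $\beta\in(0,\alpha-D/q)$. Applying the two moment bounds above with exponent $\alpha q$ (admissible since $\alpha q>D\ge2$) shows that the constant $C_Z$ from \eqref{Eq91}, formed with $q$ and $\alpha$ in the roles of $p$ and $\alpha$, is finite and bounded by a constant depending only on $(\beta,C_{\mathrm{reg}},p,V,T)$. Theorem \ref{Thm_KC} then produces a continuous modification of $\xi_{0,\cdot}(\cdot)$ whose $C^{\beta/2,\beta}([0,T]\times V;\R^d)$-norm lies in $L^q(\Omega)$ with norm controlled by that constant; this modification coincides almost surely with the flow $\xi$ furnished by Theorem \ref{Thm_homeom}, and the continuous embedding $L^q(\Omega)\hookrightarrow L^p(\Omega)$ on the probability space $(\Omega,\A,\P)$ yields the claimed bound with $K_{(\beta,C_{\mathrm{reg}},p,V,T)}$.

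The only genuinely delicate point is the anisotropic moment estimate on the increments of $\xi$: although each step is routine SDE bookkeeping, one must extract the correct parabolic scaling $|t-t'|^{p/2}$ (rather than $|t-t'|^p$) for the temporal increment---obtained by pairing the BDG bound for the stochastic integral with the crude estimate $|t-t'|\le T$ on the drift---and keep all constants uniform over $x,x'\in V$, which is precisely where boundedness of $V$ and the linear growth part of Assumption \ref{Ass:Lip} enter.
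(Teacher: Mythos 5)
Your proof is correct and follows the same overall strategy as the paper—namely, verifying the hypothesis \eqref{Eq91} of the quantitative Kolmogorov–Chentsov theorem (Theorem \ref{Thm_KC}) on the parabolic metric space $[0,T]\times V$ and then reading off the bound from \eqref{Eq92}—but it differs in how the increment moment bound is obtained. The paper does not derive this from scratch: it first observes that the periodicity assumption \eqref{eq:periodicity_assumption} combined with linear growth forces the coefficients $(\mu,\sigma)$ to be \emph{bounded} (a periodic linear-growth function is automatically bounded), and then cites the resulting improved estimate \eqref{Eq25}, $\E[|\xi_{s,t}(x)-\xi_{s',t'}(x')|^p]\lesssim_{(C_{\mathrm{reg}},p,T)} |x-x'|^p+|s-s'|^{p/2}+|t-t'|^{p/2}$, from the remark after Theorem II.2.1 in Kunita's lecture notes, with constants uniform in $x$. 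You instead rederive these increment bounds directly via BDG, Gronwall and the linear-growth/Lipschitz structure of \eqref{eq:SDE_2}; this produces the factor $(1+|x'|^p)$, which you absorb using boundedness of $V$ rather than the periodicity observation. Both routes are valid and yield the same constant $K_{(\beta,C_{\mathrm{reg}},p,V,T)}$. Yours is more self-contained (and, as a by-product, never actually uses the periodicity hypothesis—it works for any coefficients satisfying Assumption \ref{Ass:Lip}—since $V$-dependence is already permitted in the statement), while the paper's is shorter because it exploits the boundedness of the coefficients to invoke a ready-made estimate that is uniform over all of $\R^d$.
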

\begin{proof}  We observe that the periodicity condition \eqref{eq:periodicity_assumption} together with linear growth actually implies boundedness. Therefore, 
recalling the improved  bound 
\begin{equation}\label{Eq25}
\E \bigl[
|\xi_{s,t}(x) -\xi_{s',t'}(x')|^p
\bigr]\,\lesssim_{(C_{\mathrm{reg}},  p,T)}\, |x-x'|^p \,+\, |s-s'|^{p/2}\,+\, |t-t'|^{p/2}
\end{equation}
from the remark after \cite[Theorem II.2.1]{Kunita}, it only remains to apply Theorem \ref{Thm_KC} as laid out in the comments preceding \eqref{Eq93}.
\end{proof}
\begin{proposition}
    \label{prop:periodic_flows}
      Let Assumption \ref{Ass:Ck_alpha} with $k=1$ and $\alpha\in (0,1]$ and the periodicity condition \eqref{eq:periodicity_assumption} be satisfied, 
    and $C_{\mathrm{reg}}$ be a constant such that \eqref{Eq23} holds. Then the stochastic flow of diffeomorphisms $\xi_t = \xi_{0,t}$ satisfies the bounds 
    \begin{align}\label{Eq26}
     \E \bigl[
       \| D\xi \|_{C^{\beta/2,\beta}([0,T]\times V; \R^{d\times d})}^p
        \bigr]\,&\le\, K_{{(\beta, C_{\mathrm{reg}}, p,V, T)}},
        \\
        \label{Eq28}
        \E \bigl[
       \| (D\xi(\cdot) )^{-1}\|_{C^{\beta/2,\beta}([0,T]\times V; \R^{d\times d})}^p
        \bigr]\,&\le\, K_{{(\beta, C_{\mathrm{reg}}, p,V, T)}},
    \end{align}
    for each $\beta\in (0,\alpha)$, $p\in (0,\infty)$ and bounded $V\subset \R^d$. {Moreover, if Assumption \ref{Ass:Ck_alpha} holds also for $k>1$, then additionally to \eqref{Eq26} we have 
    \begin{equation}\label{eqn:xi_higher_der}
        \E \bigl[
       \| D^k\xi \|_{C^{\beta/2,\beta}([0,T]\times V; (\R^{d})^{\otimes k})}^p
        \bigr]\,\le\, K_{{(\beta, C_{\mathrm{reg}}, p,k,V, T)}}.
    \end{equation}
   }
\end{proposition}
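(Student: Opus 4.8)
The plan is to follow the scheme of the proof of Proposition~\ref{prop:periodic_flows_I}: for each of the three random fields $D\xi$, $(D\xi)^{-1}$, $D^k\xi$ I would first record a pointwise bound on its $L^p$-increments over $M:=[0,T]\times V$ and then feed it into the Kolmogorov--Chentsov theorem (Theorem~\ref{Thm_KC}) as explained in the comments preceding~\eqref{Eq93}. One point must be handled differently from Proposition~\ref{prop:periodic_flows_I}: unlike $\xi$ itself, these fields have increments that are only H\"older-$\alpha$, not Lipschitz, in the parabolic metric $d_M((t,x),(t',x')):=\max\{|t-t'|^{1/2},|x-x'|\}$, and since $\alpha<1$ this is not directly of the form required in the hypothesis~\eqref{Eq91}. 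I would therefore apply Theorem~\ref{Thm_KC} on $M$ equipped with the rescaled metric $\rho:=d_M^{\alpha'}$ for a fixed $\alpha'\in(\beta,\alpha)$; because $d_M$ is bounded on $M$ the $L^p$-increments are then Lipschitz in $\rho$, and $(M,\rho)$ has finite diameter, finite Minkowski dimension $(d+2)/\alpha'$, and finite doubling number, so Theorem~\ref{Thm_KC} applies and, for $p$ large and suitable exponents $0<b<a-(d+2)/(\alpha'p)$ with $(d+2)/(\alpha'p)<a<1$, yields a modification with finite $p$-th moment of the $\rho$-H\"older seminorm of order $b$, i.e.\ of $\|\cdot\|_{C^{\alpha'b/2,\alpha'b}(M)}$. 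Taking $p$ large and $b$ close to $1$ so that $\alpha'b\ge\beta$, then using boundedness of $M$ to drop to the smaller H\"older exponent $\beta$ and finiteness of $\P$ to drop to an arbitrary moment order $p\in(0,\infty)$, reduces everything to the respective pointwise increment bound.

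For~\eqref{Eq26}: since $\eta_{s,t}(x,y)\to\partial_l\xi_{s,t}(x)$ as $y\to0$ under Assumption~\ref{Ass:Ck_alpha} with $k=1$, the estimate~\eqref{Eq_est_eta} and Fatou's lemma give, after putting $s=s'=0$ and restricting to $x,x'\in V$ (so that the growth factor $(1+|x|+|x'|)^{\alpha p}$ becomes a constant depending on $V$),
\[
\E\bigl[|D\xi_t(x)-D\xi_{t'}(x')|^p\bigr]^{1/p}\,\lesssim_{(\alpha,C_{\mathrm{reg}},p,V,T)}\,d_M\bigl((t,x),(t',x')\bigr)^{\alpha},\qquad p\in(2,\infty),
\]
and moreover $\sup_{(t,x)\in M}\E[|D\xi_t(x)|^p]<\infty$ by the standard moment bounds for the linear stochastic differential equation solved by $D\xi$, whose coefficients are controlled by $C_{\mathrm{reg}}$. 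The scheme above then produces~\eqref{Eq26}.

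For~\eqref{Eq28} I would transfer this via the resolvent identity $\psi_t(x)-\psi_{t'}(x')=\psi_t(x)\bigl(D\xi_{t'}(x')-D\xi_t(x)\bigr)\psi_{t'}(x')$, where $\psi_t(x)=(D\xi_t(x))^{-1}$, once I have the uniform pointwise bound $\sup_{(t,x)\in M}\E[|\psi_t(x)|^q]<\infty$ for all $q<\infty$. That bound I would get from the generalized Liouville formula, which represents $\det D\xi_t(x)$ as the exponential of a drift bounded---uniformly in $(t,x)$---by a constant times $T$ plus a continuous local martingale whose quadratic variation is bounded, again uniformly, by a constant times $T$; a standard exponential-martingale estimate then bounds all positive and negative moments of $\det D\xi_t(x)$ uniformly in $(t,x)$, and Cram\'er's rule $|\psi_t(x)|\lesssim_d|D\xi_t(x)|^{d-1}|\det D\xi_t(x)|^{-1}$ together with the moment bounds on $D\xi$ finishes the pointwise estimate. (Equivalently, $\psi$ solves a linear stochastic differential equation with bounded coefficients, obtained from the matrix It\^o formula.) H\"older's inequality, the resolvent identity and the increment bound of the previous paragraph then give $\E[|\psi_t(x)-\psi_{t'}(x')|^p]\lesssim d_M((t,x),(t',x'))^{\alpha p}$ for $p\in(2,\infty)$, and the scheme yields~\eqref{Eq28}. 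For~\eqref{eqn:xi_higher_der} with $k>1$: the processes $\partial_\gamma\xi$, $|\gamma|=k$, solve linear stochastic differential equations with inhomogeneities polynomial in the already-controlled lower-order derivatives $\partial_{\gamma'}\xi$, $|\gamma'|<k$, and top-order coefficients $D^k\mu,D^k\sigma$ of class $C^\alpha$, so (as recalled after Theorem~\ref{Thm:Ck}) an induction on $|\gamma|$ using~\eqref{Eq_est_eta}-type estimates gives, after $s=s'=0$ and restriction to $x,x'\in V$, the bound $\E[|D^k\xi_t(x)-D^k\xi_{t'}(x')|^p]\lesssim d_M((t,x),(t',x'))^{\alpha p}$ for $p\in(2,\infty)$ together with $\sup_{(t,x)\in M}\E[|D^k\xi_t(x)|^p]<\infty$; the scheme then delivers~\eqref{eqn:xi_higher_der}.

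I expect the Kolmogorov--Chentsov bookkeeping (including the metric rescaling) and the higher-derivative induction to be routine given the cited results. The step I expect to require genuine care is the uniform-in-$(t,x)$ control of $(D\xi)^{-1}$ entering~\eqref{Eq28}: its increment bound can only be read off from that of $D\xi$ after $\det D\xi$ has been bounded below with negative moments uniformly in $(t,x)$, and obtaining this forces one to exploit the Jacobian/Liouville (or matrix It\^o) structure rather than argue with $\xi$ and its flow property alone.
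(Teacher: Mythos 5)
Your proof is correct, and its overall skeleton---derive $L^p$-increment bounds following Kunita and feed them into Theorem~\ref{Thm_KC}---is the same as the paper's. For~\eqref{Eq26} and~\eqref{eqn:xi_higher_der} the two arguments are essentially identical: the paper invokes periodicity to replace~\eqref{Eq_est_eta} by the uniform estimate~\eqref{Eq50}, while you restrict $x,x'$ to the bounded set $V$, which has the same effect since the final constant is allowed to depend on $V$. Where your route genuinely diverges is in the proof of~\eqref{Eq28}. The paper obtains the moment increment bound for $\psi_t(x)=(D\xi_t(x))^{-1}$ by working directly with the linear SDE that $\psi$ satisfies, as in Kunita's proof of his Theorem~II.4.4, with the improved coefficient bounds. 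You instead (i) bound $\sup_{(t,x)}\E[|\psi_t(x)|^q]$ via the Liouville/Jacobi exponential formula for $\det D\xi_t(x)$ together with Cram\'er's rule, and (ii) transfer the increment regularity from $D\xi$ to $\psi$ via the matrix identity $A^{-1}-B^{-1}=A^{-1}(B-A)B^{-1}$ and generalized H\"older. Both are correct; the paper's argument stays closer to Kunita's existing machinery, while your decomposition is more modular and avoids having to analyze the $(t,x)$-increment structure of the linear SDE for $\psi$ at all---the price being that you must separately establish the uniform negative moment bound for $\det D\xi$, for which the Jacobian-exponential representation is the natural tool. Finally, you spell out the metric rescaling $\rho=d_M^{\alpha'}$ needed to place the H\"older-$\alpha$ increment estimate into the Lipschitz form required by~\eqref{Eq91}; the paper elides this bookkeeping when it asserts that Theorem~\ref{Thm_KC} applies. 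Your version is therefore slightly more careful on that point, and nothing is wrong.
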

\begin{proof}
Inspecting the proof of \cite[Lemma II.3.2]{Kunita} we find that the terms $|x|$ and $|x'|$ on the right-hand side of \eqref{Eq_est_eta} enter via bounds on differences of $\xi$ itself, which by the boundedness of the coefficients under the above assumptions satisfies the improved estimate \eqref{Eq25}. Therefore, we have for $\eta$ defined in \eqref{eq:defi_eta} that
\begin{align}\begin{split}\label{Eq50}&
	\E \bigl[
	\bigl|
	\eta_{s,t}(x,y) - \eta_{s',t'}(x',y')
	 \bigr|^p
	\bigr]
	\\&\quad 
	\lesssim_{(\alpha,p, C_{\mathrm{reg}}, T)}
	|x-x'|^{\alpha p} +|y-y'|^{\alpha p } \,+\, 
	|s-s'|^{\alpha p /2} +|t-t'|^{\alpha p /2} ,
    \end{split}
\end{align}
which by Theorem \ref{Thm_KC} and the comments before \eqref{Eq93}  yields already \eqref{Eq26}. 

Next, we recall that the proof of H\"older continuity of \eqref{Eq15} from \cite[Theorem II.4.4]{Kunita} relies on the linear equation
\begin{align*}
 (D\xi_{t}(x))^{-1} \,=\, &\id_{\R^d} \,-\, \int_0^t (D\xi_{s}(x))^{-1}D\mu_{s}( \xi_{s}(x))\,\dd s\\& -\, \frac{1}{2}\sum_{n\ge 1}
\int_0^t (D\xi_{s}(x))^{-1}D\sigma_{n,s}(\xi_{s}(x))D\sigma_{n,s}(\xi_{s}(x))\,\dd s\\ &  -\, \sum_{n\ge 1} \int_0^t (D\xi_{s})^{-1}(x)D\sigma_{n,s}(\xi_{s}(x)) \,\dd w^n_s,
\end{align*}
satisfied by the inverse Jacobian of $\xi$. 
To proceed, it suffices then to use the regularity of the coefficients, for which in our special case holds the improved \eqref{Eq25}, resulting in the estimate 
\begin{align}\begin{split}&
	\E \bigl[
	\bigl|
	(D\xi_t(x))^{-1} - (D\xi_{t'}(x'))^{-1}
	 \bigr|^p
	\bigr]
	\,
	\lesssim_{(\alpha,p, C_{\mathrm{reg}}, T)}\, 
	|x-x'|^{\alpha p}\,+\, |t-t'|^{\alpha p /2} ,
    \end{split}
\end{align}
again without any $|x|$ or $|x'|$-dependent constants. Using Theorem \ref{Thm_KC} we obtain \eqref{Eq28}.

{Lastly, regarding \eqref{eqn:xi_higher_der}, we 
observe that the eliminated $x$-dependence in the estimate  
\eqref{Eq50} compared to the previous \eqref{Eq_est_eta}
can be iterated when obtaining estimates on the higher order derivatives as in \cite[Theorem II.3.3]{Kunita}.
}
\end{proof}
\begin{proposition}\label{cor:quant_inverse}
    Under the assumptions of Proposition \ref{prop:periodic_flows} we have
    \begin{align}\label{Eq111}
     \E \bigl[
       \| \Psi \|_{C^{\beta/2,1}([0,T]\times V; \R^{ d})}^p
        \bigr]\,&\le\, K_{{(\beta, C_{\mathrm{reg}}, p,V, T)}},
    \end{align}
    for each $\beta\in (0,\alpha)$, $p\in (0,\infty)$ and bounded $V\subset \R^d$, where  $\Psi_t = \xi_t^{-1}$ is the inverse stochastic flow.
\end{proposition}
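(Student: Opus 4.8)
The plan is to promote the qualitative argument of Proposition \ref{Prop_Psi} to a quantitative one, exploiting that all the constants appearing there now admit moment bounds thanks to Proposition \ref{prop:periodic_flows}. First I would reduce, exactly as in the proof of Proposition \ref{Prop_Psi}, to estimating $\Psi_{\cdot,0}=\xi_\cdot^{-1}$ on a bounded set $V$; by periodicity it suffices to bound $\|\Psi\|_{C^{\beta/2,1}([0,T]\times V;\R^d)}$ for $V$ a fixed large ball, since $\Psi_t(y+j)=\Psi_t(y)+j$. The spatial $C^1$-regularity and the bound $\E[\|D\Psi\|_{C([0,T]\times V)}^p]\lesssim 1$ follow from $D\Psi_t(y)=(D\xi_t(\Psi_t(y)))^{-1}$ together with \eqref{Eq28}, once one knows $\Psi$ maps $[0,T]\times V$ into a (random) bounded set with controlled moments; the latter is immediate from periodicity because $\Psi_t(V)\subseteq V+[-N,N]^d$ deterministically for $N$ depending only on $\diam V$ (the displacement $\xi_t(x)-x$ and hence $\Psi_t(y)-y$ is bounded by a deterministic constant, as the coefficients are bounded under the periodicity assumption).

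The core is the temporal H\"older bound. I would revisit the fixed-point identity \eqref{Eq8}, which expands $\Psi_t(y)-\Psi_{t'}(y')$ as $\I_1+\I_2$ with
\[
\I_1=-(D\xi_{0,t})^{-1}(\Psi_t(y))\bigl[(y'-y)+(\xi_{0,t}(\Psi_{t'}(y'))-\xi_{0,t'}(\Psi_{t'}(y')))\bigr]
\]
and $\I_2$ the second-order Taylor remainder. Taking $y=y'$ (which suffices, since the spatial part is already Lipschitz with good moments), $|\I_1|\lesssim \|(D\xi)^{-1}\|_{C}\,\|\xi\|_{C^{\beta/2,\beta}}|t-t'|^{\beta/2}$, while $|\I_2|\lesssim \|(D\xi)^{-1}\|_C\,[D\xi]_{C^{0,\beta}}\,|\Psi_t(y)-\Psi_{t'}(y)|^{1+\beta}$. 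Writing $R:=\|(D\xi)^{-1}\|_{C([0,T]\times V)}+\|\xi\|_{C^{\beta/2,\beta}([0,T]\times V)}$ (a random variable with moments of all orders by Propositions \ref{prop:periodic_flows_I} and \ref{prop:periodic_flows}), one gets a pathwise inequality of the shape
\[
|\Psi_t(y)-\Psi_{t'}(y)|\,\le\, C R\,|t-t'|^{\beta/2}\,+\,C R\,|\Psi_t(y)-\Psi_{t'}(y)|^{1+\beta}.
\]
The standard bootstrap (if $|\Psi_t(y)-\Psi_{t'}(y)|$ is already known to be small, the quadratic-type term is absorbed) then yields $|\Psi_t(y)-\Psi_{t'}(y)|\le 2CR|t-t'|^{\beta/2}$ \emph{provided} $|t-t'|$ is below a threshold depending on $R$; for $|t-t'|$ above that threshold one uses the trivial deterministic bound $|\Psi_t(y)-\Psi_{t'}(y)|\le 2N$ together with $|t-t'|^{\beta/2}\gtrsim (\text{threshold})^{\beta/2}$. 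Carefully tracking the power of $R$ coming out of the threshold, this gives a pathwise bound $[\Psi]_{C^{\beta/2,1}([0,T]\times V)}\le C(1+R)^{\kappa}$ for some explicit exponent $\kappa=\kappa(\beta)$.

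Finally, raising to the $p$-th power and taking expectations, the right-hand side is finite because $R$ has moments of every order by Propositions \ref{prop:periodic_flows_I} and \ref{prop:periodic_flows} (here one uses that $p(1+\beta)^{-1}\kappa$-th moments of $R$ are controlled — so one simply applies the moment bounds with a sufficiently large exponent and H\"older's inequality). This yields \eqref{Eq111}. The main obstacle I anticipate is making the localization in $|t-t'|$ quantitative and uniform: the threshold below which the contraction/bootstrap works is itself a negative power of the random variable $R$, so one must be scrupulous in splitting into the regimes $|t-t'|\le R^{-\text{const}}$ and $|t-t'|\ge R^{-\text{const}}$ and checking that \emph{both} contributions lead to a bound that is a fixed polynomial in $R$ with finite moments; a secondary technical point is ensuring the constants are uniform in the base point $(t_0,y_0)$, which follows by compactness of $[0,T]\times \overline V$ exactly as in the proof of Proposition \ref{Prop_Psi}.
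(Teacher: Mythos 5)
Your overall strategy is the same as the paper's: derive, on a set where the Hölder norms of $\xi$, $D\xi$, and $(D\xi)^{-1}$ are controlled by a parameter $M$ (resp.\ your random variable $R$), the pathwise inequality
\[
|\Psi_t(y) - \Psi_{t'}(y')| \,\le\, M|\Psi_t(y)-\Psi_{t'}(y')|^{1+\beta} + M\bigl(|y-y'|+|t-t'|^{\beta/2}\bigr),
\]
bootstrap it to a linear bound for small increments, extend to all $t,t',y,y'$, and then integrate in $\omega$ using the moment estimates of Propositions \ref{prop:periodic_flows_I}--\ref{prop:periodic_flows} (the paper via a layer-cake representation, you by raising a pathwise polynomial bound to the $p$-th power --- these are interchangeable). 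Two steps of your proposal need repair, though.

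First, the assertion that $\xi_t(x)-x$, and hence $\Psi_t(y)-y$, is bounded by a \emph{deterministic} constant is false: the periodic displacement $\xi_t(x)-x = \int_0^t \mu_s(\xi_s)\,\dd s + \sum_n\int_0^t \sigma_{n,s}(\xi_s)\,\dd w^n_s$ has a bounded drift contribution, but the martingale part has Gaussian tails, not an almost-sure bound, even when $\mu,\sigma$ are bounded. What is true is that $\sup_{t\le T,\,y}|\Psi_t(y)-y| \le \sup_{t\le T,\,x}|\xi_t(x)-x|$ is a random variable with finite moments of all orders. This keeps your ``above-threshold'' regime alive (you end up multiplying two polynomial-in-$R$ quantities and then applying H\"older), but the word ``deterministic'' must go. The paper sidesteps the large-$|t-t'|$ regime entirely by a chaining argument over $\sim M^{\text{const}}$ subintervals, which produces the polynomial-in-$M$ Hölder constant without ever invoking a global bound on $|\Psi_t(y)-\Psi_{t'}(y')|$. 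Second, the ``standard bootstrap'' --- absorbing $M|\Psi_t(y)-\Psi_{t'}(y')|^{1+\beta}$ whenever the increment is small --- requires knowing \emph{a priori} that the increment is below the absorption threshold, which does not follow from the pathwise inequality alone. You correctly flag this as the main obstacle, but the resolution is not given: the paper closes it with an open-closed connectedness argument exploiting the qualitative continuity of $\Psi$ from Proposition \ref{Prop_Psi} (the set of points in a small parabolic neighbourhood of $(t',y')$ where $|\Psi_t(y)-\Psi_{t'}(y')|\le R^*_{M,\beta}$ is shown to be nonempty, open, and closed, hence everything). That connectedness step is the ingredient missing from your proposal.
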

\begin{remark}
    Before giving the proof of the above proposition, let us comment on the measurability of $\Psi$, which can be a subtle issue when considering a general, random inverse mapping. In our situation, however, due to the completeness of $\F$ and the continuity properties stated in Theorem \ref{Thm_homeom} and Proposition  \ref{Prop_Psi}, we can argue that $\Psi(x) \colon \Omega \times [0,T] \to \R^d$ defines a progressive process for all $x\in \R^d$. Indeed, by the completeness of $\F$, we can consider versions of the inverses $\xi_t$ and $\Psi_t$, which are continuous in $(t,x)$ for every $\omega\in \Omega$. Thereby, to deduce that $\Psi(x)$ is progressive, we only need to show adaptedness. To this end, we let $(q_k)_{k\in \N}$ be a numeration of $\Q^d$, and we define $\F_t$-measurable mappings $Y_{N} \colon \Omega \to \R^d$ by setting 
    \[Y_N(\omega) \,=\, q_{k_N^*(\omega)}
    \qquad \text{for}\qquad
   k_N^*(\omega) \,=\, \min \bigl\{
    k\in \N \,\big| \, 
    |\xi_t(\omega, q_k) -x |\,<\, 1/N
    \bigr\}.
    \]
    Using that $\Psi_t   = \xi_t^{-1}$, we deduce moreover
    \[Y_N(\omega)\,-\,
    \Psi_t(\omega,x)  \,=\,\Psi_t(\omega,\xi_t (\omega, q_{k_N^*(\om)})) \,-\, \Psi_t(\omega,x) ,
    \]
    and the right-hand side tends to $0$ as $N\to\infty$, by continuity and the definition of $Y_N$. Therefore, we see that the $\omega$-wise limit $\Psi_t(x)$ of $\F_t$-measurable mappings is $\F_t$-measurable itself, as desired. 
    Using the space-time continuity of $\Psi$ stated in Proposition \ref{Prop_Psi}, we see in particular that the left-hand side of \eqref{Eq111} is the expectation of a random variable.
\end{remark}
\begin{proof}[Proof of Proposition \ref{cor:quant_inverse}]
The improved assertions from the previous Propositions \ref{prop:periodic_flows_I}--\ref{prop:periodic_flows} allow us to also cast the proof of Proposition \ref{Prop_Psi} into a quantified form. To this end, we let $M\ge 1$ and restrict ourselves for now to the event 
\begin{align} \begin{split}
\Lambda_M \,=\, \Bigl\{&
\|\xi\|_{C^{\beta/2,\beta} ([0,T]\times Q; \R^d) } \vee 
\|D \xi\|_{C^{\beta/2,\beta} ([0,T]\times Q; \R^{d\times d}) } 
\\&\vee 
\|(D \xi(\cdot ))^{-1}\|_{C^{\beta/2,\beta} ([0,T]\times Q; \R^{d\times d}) } \,\le\, \sqrt{M}
\Bigr\},\end{split}
\end{align}
where $Q=[-1,2)^d$ is sufficiently large to contain the torus.
Following the third step of the proof of Proposition \ref{Prop_Psi} we obtain then on $\Lambda_M$ that
\begin{align}\label{Eq30}
    |\Psi_t(y) - \Psi_{t'}(y') |\,\le\, M 
    |\Psi_t(y) - \Psi_{t'}(y') |^{1+\beta} \,+\, M \bigl(
    |y'-y|+ |t'-t|^{\beta/2}
    \bigr),
\end{align}
due to the periodicity of $\xi$.
To capitalize on the latter estimate, we introduce the concave auxiliary function $\phi_{M,\beta}(r) = r-Mr^{1+\beta}$ starting at $\phi_{M,\beta}(0)=0$ and admitting its maximum at $R_{\max}\coloneq((1+\beta)M)^{-1/\beta}$. The latter is in particular larger than
\[
R^*_{M,\beta}\,=\, (2M)^{-1/\beta}.
\]
Moreover, for all $r\le R^{*}_{M,\beta} $ we have $Mr^{1+\beta}\le r/2$ and therefore $r/2 \le \phi_{M,\beta}(r)$ . We conclude from \eqref{Eq30} that for all $\delta>0$ and for all $t,t',y,y'$ with $    |y'-y|+ |t'-t|^{\beta/2} \le \delta/M$ 
\begin{equation}\label{Eq222}
\phi_{M,\beta}\bigl( 
|\Psi_t(y) - \Psi_{t'}(y') |
\bigr)\,\le\, \delta.
\end{equation}
Taking now $\delta
    \le 
R^*_{M,\beta}/2 \le 
\phi_{M,\beta}(R^*_{M,\beta})$, we claim that $|\Psi_t(y) - \Psi_{t'}(y') | \le R^*_{M,\beta}$ for all $t,t',y,y'$ as before. Before we prove the claim, note that it can be combined with the previous considerations to obtain  
\begin{equation}\label{Eq31}
    \frac{1}{2}|\Psi_t(y) - \Psi_{t'}(y') |\,\le\, 
\phi_{M,\beta}\bigl( 
|\Psi_t(y) - \Psi_{t'}(y') |
\bigr)\,\le\, M\bigl( |y'-y|+ |t'-t|^{\beta/2}\bigr)
\end{equation}
as soon as  \[   |y'-y|+ |t'-t|^{\beta/2}
    \le R^*_{M,\beta}/(2M) \,=\, (2M)^{-(\beta+1)/\beta}.\]
Now, to prove the claim, it suffices to consider the case $\delta = R^*_{M,\beta}/2$ for which we fix $t',y'$ and let 
\[
B'\,=\,\bigl\{ 
(t,y)\in [0,T] \times \R^d \,\big|\,
|y'-y|+ |t'-t|^{\beta/2} \le R^*_{M,\beta}/(2M)
\bigr\}.
\]
We moreover let 
$G$  be the set of 'good points' in $B$, i.e.,
\[G = \{(t,y)\in B': |\Psi_t(y) - \Psi_{t'}(y') | \le R^*_{M,\beta}\}.\]
We will show that $G = B'$ by proving that $G\ne\emptyset$ is open and closed in the connected set $B$. Since $\Psi$ is continuous thanks to Proposition \ref{Prop_Psi}, the closedness of $G$ is immediate. To show that $G$ is open in $B'$, fix $(t,y)\in G$. Since $(t,y)\in G$ and $\Psi$ is continuous we can choose $\varepsilon>0$ so small that 
\[|\Psi_s(z) - \Psi_{t'}(y') | < R_{\max} \qquad \text{for all} \qquad (s,z)\in B_{\varepsilon}(t,y) \coloneq \{(s,z)\in B':|z-y|+|s-t|^{\beta/2}<\varepsilon\}.\]
For $(s,z)\in B_{\varepsilon}(t,y)$, we have by \eqref{Eq222} at the same time 
\[\phi_{M,\beta}(|\Psi_s(z) - \Psi_{t'}(y') |)\,\leq\, R^*_{M,\beta}/2 \, \le \,
\phi_{M,\beta}(R^*_{M,\beta}).\]
Since $\phi_{M,\beta}$ is strictly increasing on $[R^*_{M,\beta}, R_{\max}]$ we can conclude that
$|\Psi_s(z) - \Psi_{t'}(y')| \leq R^*_{M,\beta}$, and thus $(s,z)\in G$ as well.  This proves that $G$ is open in $B'$, and therefore establishes the claim.   

To proceed, we observe that \eqref{Eq31} clearly implies that for all $t,t',y,y'$ with $|y'-y|+ |t'-t|^{\beta/2} \le  (2M)^{-(\beta+1)/\beta}$,
\begin{equation}\label{Eq313}|\Psi_t(y) - \Psi_{t'}(y') |
\,\le\, 2M\bigl( |y'-y|+ |t'-t|^{\beta/2}\bigr).
\end{equation}
We would like to derive a similar estimate for an arbitrary choice of $t,t',y,y'$ by a chaining argument. For this purpose, we choose sequences $y_0=y$, $t_0 = t$ and $y_l = y'$, $t_k = t'$ with
$|y_i - y_{i-1}| \le (2M)^{-(\beta+1)/\beta}$, $|t_j -t_{j-1}|^{\beta/2} \le (2M)^{-(\beta+1)/\beta}$ and additionally 
\begin{align*}&
\sum_{i=1}^l |y_i - y_{i-1}| \,=\, |y'-y| 
\end{align*}
and
\begin{align}\label{eqn:condition_tjs} 
\sum_{j=1}^k |t_j - t_{j-1}|^{\beta/2} \,\le \, \begin{cases}
|t-t'|^{\beta/2} , & |t-t'|^{\beta/2} \le (2M)^{-(\beta+1)/\beta}, \\
2(2M)^{(2-\beta)(\beta+1)/\beta^2 }
|t'-t|,
& |t-t'|^{\beta/2} > (2M)^{-(\beta+1)/\beta}
.\end{cases}
\end{align}
Here, the first case of the preceding condition simply requires us to take $k=1$, if possible.
The left-hand side of \eqref{eqn:condition_tjs} can, in any case, be bounded by 
\[
C_{\beta,T} M^{(2-\beta)(\beta+1)/\beta^2 }|t'-t|^{\beta/2},
\]
and applying the estimate \eqref{Eq313} to each of the summands of
\[
|\Psi_t(y) - \Psi_{t'}(y') |\,\le\, \sum_{i=1}^l
|\Psi_{t_0}(y_i) - \Psi_{t_0}(y_{i-1}) | \,+\, \sum_{j=1}^k 
|\Psi_{t_j}(y_l) - \Psi_{t_{j-1}}(y_l) |,
\]
results then in the bound
\[
|\Psi_t(y) - \Psi_{t'}(y') | \,\le\, C_{\beta,T} M^{(2+\beta)/\beta^2}\bigl( |y'-y|+ |t'-t|^{\beta/2}\bigr),
\]
for arbitrary $t,t',y,y'$. We conclude that
\[
[\Psi]_{C^{\beta/2,1}( [0,T] \times \R^d; \R^d) } \,\le\, C_{\beta,T}
M^{(2+\beta)/\beta^2}\,\qquad \;\text{on } \;\Lambda_M,
\]
for $M\ge 1$. Thereby, the layer cake representation theorem  yields
\begin{align}\begin{split}\label{Eq36}&
    \E \Bigl[
    [\Psi]_{C^{\beta/2,1}( [0,T] \times \R^d; \R^d) }^p
    \Bigr]
    \,=\,\int_0^\infty 
    \P \bigl( [\Psi]_{C^{\beta/2,1}( [0,T] \times \R^d; \R^d) } > \lambda^{1/p} \bigr)
    \,\dd \lambda 
    \\&\quad =\,  pC_{\beta,T}^p\frac{2+\beta}{\beta^2}\int_0^\infty  \P \bigl( [\Psi]_{C^{\beta/2,1}( [0,T] \times \R^d; \R^d) } > C_{\beta,T}M^{(2+\beta)/\beta^2)} \bigr) M^{p(2+\beta)/\beta^2 -1}
    \,\dd M 
    \\&\quad 
    \lesssim_{\beta,p,T} \,1 \,+\, \int_0^\infty 
    \P \bigl( \Omega\setminus \Lambda_M \bigr) M^{p(2+\beta)/\beta^2 -1}
    \,\dd M 
    \\&\quad=\,1 \,+\, \int_0^\infty 
    \P \bigl( X_{\max} >\sqrt{M} \bigr) M^{p(2+\beta)/\beta^2 -1} \,\dd M
    \,\eqsim_{p,\beta}\, 1 \,+\, \E \bigl[X_{\max}^{2p(2+\beta)/\beta^2}  \bigr],\end{split}
\end{align}
where 
\[
X_{ \max} \,=\,
\|\xi\|_{C^{\beta/2,\beta} ([0,T]\times Q; \R^d) } \vee 
\|D \xi\|_{C^{\beta/2,\beta} ([0,T]\times Q; \R^{d\times d}) } 
\vee 
\|(D \xi(\cdot ))^{-1}\|_{C^{\beta/2,\beta} ([0,T]\times Q; \R^{d\times d}) } ,
\]
and we reverted in the last estimate to the earlier transformation. It remains to use the uniform bounds on the right-hand side of \eqref{Eq36}
in terms of the data provided by the earlier Propositions \ref{prop:periodic_flows_I}--\ref{prop:periodic_flows}.
\end{proof}

\subsection{An It\^o--Wentzell formula}\label{ss:ito-wentzell}
The aim of this subsection is to provide a version of the It\^o--Wentzell formula applicable to random fields of low regularity. While a formula for the distribution-valued case is in principle available in \cite{KrylovIto}, the flow considered there depends on the initial value via $\xi_t(x) =x + Y_t$. Since we want to take $\xi$ to be the flow induced by the Stratonovich SDE \eqref{eq:eqn_xi_intro}, the latter is, however, not sufficient for our purposes, see also the discussion in Subsection \ref{ss:proof_strat}. We also remark that a flow as in \cite{KrylovIto} depends in particular smoothly on $x$ and thereby a composition with any distribution-valued random field can be justified. Due to the more complicated dependence of $\xi$ on $x$ via \eqref{eq:eqn_xi_intro}, we do require some regularity of the random field, which is however below the $C^2$-regularity in space required by traditional It\^o--Wentzell formulae, cf.\ \cite[Theorem I.8.3]{Kunita}, \cite[Theorem 3.3.1]{kunita_book} or  \cite[Proposition 2]{Tubaro}. Most importantly, the resulting assumption is satisfied by weak solutions to parabolic SPDEs.

To this end, we recall the local Sobolev spaces $H_\loc^\gamma(\R^d;\mathscr{X})$, which can for $\gamma\in \R$ and a Banach space $\mathscr{X}$ be  defined as
\[
H^\gamma_{\loc}(\R^d;\mathscr{X}) \,=\, \bigl\{
f\in \mathcal{D}'(\R^d;\mathscr{X}) \big| \forall\lambda>0: (\eta_\lambda f )\in H^\gamma(\R^d;\mathscr{X}) 
\bigr\},
\]
where $\eta$ is a smooth cutoff function with $\eta\equiv 1$ on $B_1(0)$, $\supp(\eta)\subset B_2(0)$ and $\eta_\lambda(x) = \eta(x / \lambda)$. For $s=0$ we naturally write  $L^2_{\loc}(\R^d;\mathscr{X})\,=\, H^0_{\loc}(\R^d;\mathscr{X})$. In any case, the resulting locally convex topology is induced by the metric
\[
d_{H^\gamma_{\loc}(\R^d;\mathscr{X})}(f,g) \,=\, \sum_{k\in \N} 2^{-k} \min \bigl\{
1, \|\eta_k (f-g)\|_{H^\gamma(\R^d;\mathscr{X}) }
\bigr\}
\]
and independent of the choice of $\eta$. With slight abuse of notation, we write
\[
f\in L^p ( 0,T;H^\gamma_{\loc}(\R^d;\mathscr{X})) \quad\iff\quad \forall \lambda>0: \, \eta_{\lambda}f \in L^p ( 0,T;H^\gamma(\R^d;\mathscr{X}))
\] 
for measurable $f\colon [0,T]\to H^\gamma_{\loc}(\R^d;\mathscr{X})$ and say  that $f_n \to f$ in $L^p ( 0,T;H^\gamma_{\loc}(\R^d;\mathscr{X}))$ whenever $\eta_{\lambda}f_n\to \eta_\lambda f $  in  $L^p ( 0,T;H^\gamma(\R^d;\mathscr{X}))$ for all $\lambda>0$.
In particular, if a progressively measurable $g\colon \Omega\times [0,T] \to H^\gamma_{\loc}(\R^d;\ell^2)$ lies almost surely in $ L^2 ( 0,T;H^\gamma_{\loc}(\R^d;\ell^2))$, then we can define the \emph{stochastic integral} 
\begin{equation}
\sum_{n\ge 1}
\int_0^\cdot  g_{n,s}\, \dd w^n_s \,\in \, C([0,T]; H^\gamma_{\loc}(\R^d))
\end{equation}
as follows: For each $k\in \N$, the stochastic integral $
\sum_{n \ge 1}
\int_0^\cdot \eta_k g_{n,s}\, \dd w^n_s$
exists as an adapted, continuous  process in $H^\gamma(\R^d)$ for all $k\in \N$. Moreover, for all $l\ge k$ and $\phi\in C_c^\infty(B_k(0))$ and $t\in [0,T]$, we have the consistency property
\begin{equation}\label{Eq22}
\biggl\langle \sum_{n\ge 1}
\int_0^t \eta_l g_{n,s}\, \dd w^n_s,\phi
\biggr\rangle\,=\, \sum_{n\ge 1}
\int_0^t \langle  g_{n,s}, \phi \rangle\, \dd w^n_s \,=\, \biggl\langle \sum_{n\ge 1}
\int_0^t \eta_k g_{n,s}\, \dd w^n_s,\phi
\biggr\rangle,
\end{equation}
such that we can define 
\[
\sum_{n\ge 1}
\int_0^t  g_{n,s}\, \dd w^n_s \colon \Omega \to \mathcal{D}'(\R^d)\]
as the  $\F_t$-measurable random variable whose action on $\phi\in C_c^\infty(B_k(0))$ is as in \eqref{Eq22}.
Continuity of the resulting process in $H^\gamma_{\loc}(\R^d)$ follows by the continuity of \eqref{Eq22} in $H^\gamma(\R^d)$ for each $k\in \N$. 
In the same fashion, we can define 
\[
\int_0^\cdot  f_{s}\, \dd s \,\in \, C([0,T]; H^\gamma_{\loc}(\R^d))
\]
for $f\in L^1(0,T;H^\gamma_{\loc}(\R^d))$.

Finally, we assume that $\xi\colon \R^d \to \R^d$ is a $C^2$-diffeomorphism and define the \emph{distributional composition} with some $f\in H^{-1}_{\loc}(\R^d)$ by
\begin{equation}\label{eq:distr_comp}
\langle 
f\circ \xi, \phi 
\rangle \,=\,\langle 
f ,( \phi\circ \xi^{-1} ) |\det(D\xi^{-1})|
\rangle_{H^{-1}(B_k(0)) \times {H_0^1(B_k(0))}} ,\qquad \phi\in C_c^\infty(\R^d),
\end{equation}
where we choose $k\in \N$ such that $\supp ( \phi\circ \xi^{-1})$ is compactly contained in $B_k(0)$, see \cite[Section 6.1]{hormander_I}. Since $\xi^{-1}$ twice continuously differentiable, $( \phi\circ \xi^{-1} ) |\det(D\xi^{-1})|$ is continuously differentiable and therefore an element of $C_c^1(B_k(0))$. Here we use that even though the modulus is not differentiable, $\det(D\xi^{-1})$ never reaches $0$, and as a consequence the above dual pairing is well-defined. With this at hand, we are ready to state our version of the It\^o--Wentzell formula.
\begin{proposition}\label{prop:ItoWentzel}
	Let $u\colon \Omega\times (0,T]\to H^1_{\loc}(\R^d) $, $f\colon \Omega\times [0,T] \to H^{-1}_{\loc}(\R^d)$ and $g\colon \Omega\times [0,T] \to L^2_{\loc}(\R^d;\ell^2)$ be progressively measurable with $u\in L^2(0,T; H^1_{\loc}(\R^d))$, $f\in L^1(0,T; H^{-1}_{\loc}(\R^d) )$ and $g\in L^2(0,T; L^2_{\loc}(\R^d;\ell^2) )$,  almost surely. We assume that almost surely
	\begin{equation*}
		u_t \,=\, u_0 \,+\, \int_0^t f_s\,\dd s\,+\, \sum_{n\geq 1}\int_0^t g_{n,s}\, \dd w_s^n
	\end{equation*}
	for all $t\in [0,T]$
	for an $\F_0$-measurable $u_0\colon \Omega \to L^2_{\loc}(\R^d)$.
	Additionally, let $\mu:[0,\infty)\times\Omega\times\R^d\to \R^d$ and $\sigma:[0,\infty)\times\Omega\times\R^d\to \ell^2(\R^d)$ be $\mathcal{P}\otimes \mathcal{B}(\R^d)$-measurable coefficient functions satisfying Assumption \ref{Ass:Ck_alpha} for $k=2$ and some $\alpha\in (0,1]$  and $\xi$ be the associated stochastic flow of $C^2$-diffeomorphisms satisfying  \eqref{eq:SDE_2} provided by Theorem \ref{Thm:Ck}. Then it holds almost surely,  for all $t\in [0,T]$
		\begin{align}
		\begin{split}\label{eq:ItoWentzell1}
			u_t\circ \xi_t &= u_0 + \int_0^t f_s\circ \xi_s\, \dd s + \sum_{n\geq 1}\int_0^t g_{n,s}\circ \xi_s\, \dd w_s^n\\ & \quad +
			\sum_{j=1}^d  \int_0^t [(\partial_j u_s)\circ \xi_s]  \mu^j_s(\xi_s)  \,\dd s + \sum_{n\geq 1}\sum_{j=1}^d \int_0^t [(\partial_j u_s)\circ \xi_s] \, \sigma^j_{n,s}(\xi_s)  \,\dd w_s^n
			\\ &\quad + \sum_{n\geq 1} \sum_{j=1}^d  \int_0^t [(\partial_j g_{n,s})\circ \xi_s]\sigma_{n,s}^j(\xi_s) \,\dd s
			\, + \frac12 \sum_{n\geq 1} \sum_{i,j=1}^d  \int_0^t [(\partial_i \partial_j u_s)\circ \xi_s] \sigma_{n,s}^i(\xi_s)\sigma_{n,s}^j(\xi_s) \,\dd s,
		\end{split}
	\end{align}
	as continuous, $H_{\loc}^{-1}(\R^d)$-valued processes.
\end{proposition}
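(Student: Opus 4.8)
The plan is to prove the formula by a regularization-and-density argument, reducing to the classical It\^o--Wentzell formula (e.g.\ \cite[Theorem I.8.3]{Kunita}) applied to a mollified version of $u$, and then passing to the limit. First I would test the claimed identity \eqref{eq:ItoWentzell1} against a fixed $\phi\in C_c^\infty(\R^d)$, so that, using the distributional composition rule \eqref{eq:distr_comp}, both sides become genuine real-valued semimartingales; it suffices to prove the scalar identity for each such $\phi$. Next I would introduce a standard spatial mollifier $(\rho_\varepsilon)_{\varepsilon>0}$ and set $u^\varepsilon = u * \rho_\varepsilon$, $f^\varepsilon = f*\rho_\varepsilon$, $g^\varepsilon = g*\rho_\varepsilon$. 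Since convolution commutes with the stochastic integral (on each ball $B_k(0)$, via the consistency property \eqref{Eq22}), the triple $(u^\varepsilon, f^\varepsilon, g^\varepsilon)$ satisfies the same SPDE as $(u,f,g)$ but now with $u^\varepsilon$ smooth in space (in fact $C^\infty_{\loc}$) and $f^\varepsilon \in L^1(0,T;L^2_{\loc})$, $g^\varepsilon\in L^2(0,T;H^1_{\loc}(\R^d;\ell^2))$. For the mollified fields the classical It\^o--Wentzell formula applies verbatim on each $B_k(0)$ — using that $\xi$ is a stochastic flow of $C^2$-diffeomorphisms generated by \eqref{eq:SDE_2} under Assumption \ref{Ass:Ck_alpha} with $k=2$, cf.\ Theorem \ref{Thm:Ck} — and yields \eqref{eq:ItoWentzell1} with every occurrence of $u,f,g$ replaced by $u^\varepsilon, f^\varepsilon, g^\varepsilon$.

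The core of the argument is then to pass to the limit $\varepsilon\to 0$ in the mollified identity, tested against $\phi$. For the left-hand side, $\langle u^\varepsilon_t\circ\xi_t,\phi\rangle = \langle u^\varepsilon_t, (\phi\circ\xi_t^{-1})|\det D\xi_t^{-1}|\rangle \to \langle u_t\circ\xi_t,\phi\rangle$ since $u^\varepsilon_t\to u_t$ in $H^{-1}_{\loc}$ (indeed in $H^1_{\loc}$ a.s.\ for a.e.\ $t$) and the test function $(\phi\circ\xi_t^{-1})|\det D\xi_t^{-1}| \in C^1_c$ depends continuously on the realization of $\xi_t$. For the drift terms involving $f^\varepsilon\circ\xi_s$ and the first- and second-order terms $[(\partial_j u^\varepsilon_s)\circ\xi_s]\mu^j_s(\xi_s)$, $[(\partial_i\partial_j u^\varepsilon_s)\circ\xi_s]\sigma^i_{n,s}(\xi_s)\sigma^j_{n,s}(\xi_s)$, I would again rewrite the composition via \eqref{eq:distr_comp}: for instance $\langle (\partial_i\partial_j u^\varepsilon_s)\circ\xi_s, \sigma^i_{n,s}(\xi_s)\sigma^j_{n,s}(\xi_s)\phi\rangle$ pulls back to a pairing $\langle u^\varepsilon_s, \partial_i\partial_j[\,\cdot\,]\rangle$ which, after moving the two derivatives off $u^\varepsilon_s$ by integration by parts, becomes a pairing of $u^\varepsilon_s\in H^1_{\loc}$ (so $\partial_j u^\varepsilon_s\in L^2_{\loc}$) against a function built from $\phi$, $\sigma$, and up to two derivatives of $\xi_s^{-1}$; this is where the $C^2$-regularity of the flow (and the quantitative bounds of Subsection \ref{sec:periodic_flows}, if a quantitative statement is wanted) is used to ensure the resulting test functions are bounded in the relevant norm and converge. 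The stochastic integral terms are handled by the It\^o isometry together with dominated convergence, after localizing by a stopping-time sequence to make all the $L^2(0,T;\cdot_{\loc})$-norms of $u,f,g$ and the sup-norms of the flow and its inverse and their derivatives bounded; the integrands converge pointwise in $(s,\omega)$ and are dominated, so the stochastic integrals converge in probability, uniformly in $t$ along a subsequence.

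The main obstacle I anticipate is the rigorous justification that the second-order term $\tfrac12\sum_n\sum_{i,j}\int_0^t [(\partial_i\partial_j u_s)\circ\xi_s]\sigma^i_{n,s}(\xi_s)\sigma^j_{n,s}(\xi_s)\,\dd s$ is well-defined as an $H^{-1}_{\loc}$-valued integral even though $\partial_i\partial_j u_s$ is only in $H^{-1}_{\loc}$ and $\xi_s$ is merely a $C^2$-diffeomorphism (so $|\det D\xi_s^{-1}|$ is only $C^1$, not $C^2$) — one must check that pushing the two derivatives through \eqref{eq:distr_comp} lands on something that pairs with $H^1_{\loc}$, exploiting that exactly one derivative hits $u$ and the other stays on the $C^1$-regular weight, and that the two off-diagonal cross terms in $\partial_i\partial_j$ combine so that no second derivative of $|\det D\xi^{-1}|$ actually appears. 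Closely related is controlling the mollification error uniformly: one needs $\sup_\varepsilon \|\eta_\lambda u^\varepsilon\|_{L^2(0,T;H^1)} \lesssim \|\eta_{2\lambda} u\|_{L^2(0,T;H^1)}$ and analogous bounds for $f^\varepsilon, g^\varepsilon$, which are routine but must be stated carefully because convolution does not preserve compact support. A secondary technical point is the measurability and adaptedness of $\xi^{-1}$ and of the pulled-back test functions $s\mapsto (\phi\circ\xi_s^{-1})|\det D\xi_s^{-1}|$, which is already addressed in the remark preceding the proof of Proposition \ref{cor:quant_inverse} and can be invoked directly. Once these points are in place, taking $\varepsilon\to 0$ in every term and using that $C_c^\infty(\R^d)$ separates points of $H^{-1}_{\loc}(\R^d)$ yields \eqref{eq:ItoWentzell1} as an identity of continuous $H^{-1}_{\loc}(\R^d)$-valued processes.
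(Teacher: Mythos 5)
Your proposal follows essentially the same strategy as the paper's proof: mollify $(u,f,g)$, apply a classical pointwise It\^o--Wentzell formula at each fixed $x$ (the paper uses \cite[Theorem 3.1]{KrylovIto}), integrate against $\phi\in C_c^\infty(\R^d)$ via the stochastic Fubini theorem, and pass to the limit in the resulting weak identity using the distributional composition \eqref{eq:distr_comp} together with the key uniform bound $\sup_{t\in[0,T]}\|\overline{\phi}(t,\cdot)\|_{H^1(\R^d)}\lesssim_{(\omega,k,T)}\|\phi\|_{H^1(\R^d)}$ on the pulled-back test function $\overline{\phi}_t=(\phi\circ\xi_t^{-1})\det D\xi_t^{-1}$. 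The paper organizes the limit slightly differently: it first establishes $H^{-1}_{\loc}$-continuity of both sides of \eqref{eq:ItoWentzell1} as separate steps and then verifies the identity for each fixed $t$, whereas you aim for uniform-in-$t$ convergence along a subsequence; both routes close the argument.

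One small correction regarding the ``main obstacle'' you anticipate: no cancellation between ``off-diagonal cross terms in $\partial_i\partial_j$'' is needed, and second derivatives of $\det D\xi^{-1}$ simply never arise. The resolution, which you also state in the same sentence, is that exactly one derivative is integrated by parts off $u$. After pull-back the tested second-order term is $\lb\partial_i\partial_j u_s,\sigma^i_{n,s}\sigma^j_{n,s}\overline{\phi}_s\rb = -\lb\partial_i u_s,\partial_j(\sigma^i_{n,s}\sigma^j_{n,s}\overline{\phi}_s)\rb$, pairing $\partial_i u_s\in L^2_{\loc}$ against a compactly supported continuous function involving only one derivative of the $C^1$-weight $\det D\xi_s^{-1}$ (equivalently, $D^2\xi_s^{-1}$, available by Theorem \ref{Thm:Ck} with $k=2$). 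This is precisely the form \eqref{eq:weakItoWentzell} used in the paper, and it is the reason the paper can close the limit with the stated regularity.
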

    We note that the distributional composition rule \eqref{eq:distr_comp} is used in the first, fifth and sixth integrals on the right-hand side of \eqref{eq:ItoWentzell1}. 
    In the above and below, we employ the short-hand notation $\xi_t\,=\, \xi_{0,t}$.
    We also remark that all the integrals on the right-hand side of \eqref{eq:ItoWentzell1} converge almost surely in $C([0,T];H^{-1}_{\loc}(\R^d))$ by the properties of $\xi$ collected in the previous subsections, as is the content of the the second step in the following proof.

\begin{proof}[Proof of Proposition \ref{prop:ItoWentzel}]
	To fix the intuition, we let $(\vp_\kappa)_{\kappa>0}$ be a smooth and compactly supported  approximate identity so that after defining $v^{\kappa} \,=\, \vp_\kappa * v$ for a distribution $v$, we have
	\[
	u_t^\kappa \,=\, u_0^\kappa \,+\, \int_0^t f_s^\kappa\,\dd s\,+\, \sum_{n\geq 1}\int_0^t g_{n,s}^\kappa\, \dd w_s^n.
	\]
	In particular, as for a fixed initial value $x\in \R^d$ 
	\[
	\xi_{t}(x ) \,=\, x\,+\, \int_0^t \mu_s(\xi_{s }(x) ) \,\dd s \,+\, \displaystyle{\sum_{n\ge 1}} \int_0^t \sigma_{n,s}(\xi_{s }(x) )\, \dd w_s^n,
	\]
	 we can apply a more classical version of the It\^o--Wentzell formula like \cite[Theorem 3.1]{KrylovIto} to deduce that almost surely
	\begin{align}\begin{split}\label{eq:ItoWentzell_pw}
			u_t^\kappa(\xi_{t}(x )) \,=\, &u_0^\kappa(x)\,+\, \int_0^t f^\kappa_s(\xi_s(x)) \,\dd s
			\,+\, \sum_{n\ge 1} \int_0^t  g_{n,s}^\kappa (\xi_s(x))\, \dd w_s^n
			\\&+\,  
			\sum_{j=1}^d  \int_0^t \mu^j_s(\xi_s(x)) \partial_j u_s^{\kappa}(\xi_s(x))   \,\dd s + \sum_{n\geq 1}\sum_{j=1}^d \int_0^t \sigma^j_{n,s}(\xi_s(x)) \partial_j u_s^\kappa (\xi_s(x)) \,\dd w_s^n
			\\&+\,  \sum_{n\geq 1} \sum_{j=1}^d  \int_0^t \sigma_{n,s}^j(\xi_s(x))\partial_j g_{n,s}^\kappa( \xi_s(x)) \,\dd s
			\\&+\, 
			\frac12 \sum_{n\geq 1} \sum_{i,j=1}^d  \int_0^t \sigma_{n,s}^i(\xi_s(x))\sigma_{n,s}^j(\xi_s(x)) \partial_{ij} u_s^\kappa( \xi_s(x)) \,\dd s,
		\end{split}
	\end{align}
	 for all $t\in [0,T]$. In particular, the above is a pointwise version of the desired \eqref{eq:ItoWentzell1} and the remainder of this proof is devoted to taking $\kappa\to 0$ in the above.

	\emph{Step 1 (Observations regarding the distributional composition with $\xi$).} Firstly, using the inverse function theorem, we have for any $\phi\in C_c^\infty(\R^d)$ that 
	\begin{equation}\label{Eq12}
	\widebar{\phi}\colon 
	[0,T]\times \R^d \to \R ,\, (t,y)\mapsto \phi(\xi_t^{-1}(y)) \det (D\xi_{t}^{-1}(y))\,=\, \bigl[\phi \cdot {\det}^{-1}(D\xi_t) \bigr] (\xi_t^{-1}(y))
	\end{equation}
	has compact support due to the continuity of $\xi$.  We remark that $\det (D\xi_t^{-1}) >0$ by $\det(D \xi_0^{-1}) = 1$ and continuity due to Proposition \ref{Prop_Psi}, which is why we drop the modulus compared to \eqref{eq:distr_comp}. 
	We claim that additionally
	\begin{equation}\label{Eq21}
	\sup_{t\in [0,T]}\| \widebar{\phi}(t,\cdot ) \|_{H^1(\R^d)} \,\lesssim_{{(\omega,k,T)}} \| \phi \|_{H^1(\R^d)},
	\end{equation}
	for almost all $\omega$, where $k$ is such that $\supp(\phi)\subset B_k(0)$.
	Indeed,  for $\phi$ satisfying the latter condition, we have that
	\begin{align*}&
		\sup_{t\in [0,T]} \int_{\R^d} \bigl|\widebar{\phi}(t,y)\bigr|^2 \, \dd y \\&\quad =\, 
		\sup_{t\in [0,T]} \int_{\R^d} \bigl| \bigl[\phi^2 \cdot {\det}^{-1}(D\xi_t) \bigr] (\xi_t^{-1}(y))
		 \det (D\xi_{t}^{-1}(y))
		\bigr| \, \dd y
		\\&\quad =\, 
		\sup_{t\in [0,T]} \int_{\R^d} \bigl|\phi^2(x) \cdot {\det}^{-1}(D\xi_t(x) )\bigr| \, \dd x
		\\&\quad 
		\le \, \|\phi\|_{L^2(\R^d)}^2 \sup_{(t,x)\in [0,T]\times B_k(0)} |{\det}^{-1}(D\xi_t (x))| \,\lesssim_{(\omega,k,T)} \,  \|\phi\|_{L^2(\R^d)}^2,
	\end{align*}
	by the transformation rule and the continuity of
	\begin{equation}\label{Eq27}
	(D\xi_{\cdot}(\cdot))^{-1} \colon [0,T]\times \R^d \to \R^{d\times d},
	\end{equation}
	guaranteed by Theorem  \ref{Thm:Ck}.
	As a second step to verify \eqref{Eq21}, we proceed similarly for the derivative and calculate 
	\begin{align*}&
		\nabla_y \widebar{\phi}(t,y) \,=\, D\xi_t^{-1}(y) \bigl[
		\nabla_x \bigl(
		\phi\cdot {\det}^{-1}(D\xi_t)
		\bigr)
		\bigr](\xi_t^{-1}(y))
		\\&\quad =\, D\xi_t^{-1}(y) \bigl[
		 {\det}^{-1}(D\xi_t) \nabla_x\phi
		\bigr](\xi_t^{-1}(y))\,+\, 
		D\xi_t^{-1}(y) \bigl[\phi
		\nabla_x \bigl(
		{\det}^{-1}(D\xi_t)\bigr) 
		\bigr)
		\bigr](\xi_t^{-1}(y))
		\\&\quad =\, I_1(t,y) \,+\, I_2(t,y).
	\end{align*}
Since 
\[
\|\nabla_y \widebar{\phi}(t,\cdot ) \|_{L^2(\R^d;\R^d)} \,\le \, 
\|I_1(t,\cdot) \|_{L^2(\R^d;\R^d)}\,+\, 
\|I_2(t,\cdot) \|_{L^2(\R^d;\R^d)},
\]
it suffices to estimate both terms separately. Regarding $I_1$ we compute
\begin{align*}&
	\sup_{t\in [0,T]} \int_{\R^d} 
	|I_1(t,y)|^2
	\,\dd y
	\\&\quad =\, 
	\sup_{t\in [0,T]} \int_{\R^d} 
	\bigl|(D\xi_t)^{-1} (\xi_t^{-1}(y))\nabla_x \phi(\xi_t^{-1}(y)) \bigr|^2
	\bigl|
	{\det}^{-1}(D\xi_t)
	 (\xi_t^{-1}(y)) 
	\det (D\xi_t^{-1}(y))
	\bigr|
	\,\dd y
	\\&\quad =\, 
		\sup_{t\in [0,T]} \int_{\R^d} 
	\bigl|(D\xi_t)^{-1}(x)\nabla_x \phi(x)\bigr|^2
	\bigl|
	{\det}^{-1}(D\xi_t(x) )
	\bigr| 
	\,\dd x
	\\&\quad \le \, 
	\|\nabla \phi\|_{L^2(\R^d;\R^d)}^2
	\sup_{(t,x)\in [0,T]\times B_k(0)}
	\bigl|(D\xi_t)^{-1}(x)\bigr|^2
	\bigl|
	{\det}^{-1}(D\xi_t(x))
	\bigr| 
	\\
	&\quad \lesssim_{(\omega,k,T)} 
	\|\nabla \phi\|_{L^2(\R^d;\R^d)}^2.
\end{align*}
where we used again the continuity of \eqref{Eq27} in the latter estimate. 
For $I_2$, we first observe that
\begin{align*}
	\partial_{x_i} {\det}^{-1}(D\xi_t)\,=\, -{\det}^{-1}(D\xi_t)\tr(( {D \xi_t})^{-1} \partial_{x_i} D \xi_t )
\end{align*}
by Jacobi's formula and the chain rule, 
which we write with a slight abuse of notation as
\[
	\nabla_x {\det}^{-1}(D\xi_t)\,=\, -{\det}^{-1}(D\xi_t)\tr(( {D \xi_t})^{-1} D^2 \xi_t ).
\]
 Thus, we can  proceed to 
\begin{align*}&
		\sup_{t\in [0,T]} \int_{\R^d} 
	|I_2(t,y)|^2
	\,\dd y
	\\&\quad =\,
	\sup_{t\in [0,T]}
	\int_{\R^d} 
	\bigl|\bigl[\phi(D\xi_t)^{-1}\tr(( {D \xi_t})^{-1} D^2 \xi_t ) \bigr](\xi_t^{-1}(y)) \bigr|^2 
	\bigl|
	\det (D\xi_t^{-1}(y))
	\bigr|^2
	\,\dd y
		\\&\quad =\,
	\sup_{t\in [0,T]}
	\int_{\R^d} 
	\bigl|\phi(x)(D\xi_t)^{-1}(x)\tr(( {D \xi_t})^{-1} D^2 \xi_t ) (x)\bigr|^2 
	\bigl|
	{\det}^{-1}(D\xi_t(x))
	\bigr|
	\,\dd x
	\\&\quad \le \, 
	\| \phi\|_{L^2(\R^d;\R^d)}^2
	\sup_{(t,x)\in [0,T]\times B_k(0)}
	\bigl|(D\xi_t)^{-1} (x)\tr(( {D \xi_t})^{-1} D^2 \xi_t )(x)\bigr|^2
	\bigl|
	{\det}^{-1}(D\xi_t(x))
	\bigr| 
	\\
	&\quad \lesssim_{(\omega,k,T)} 
	\| \phi\|_{L^2(\R^d;\R^d)}^2,
\end{align*}
where the last estimate also uses the continuity of $D^2 \xi$ due to Theorem \ref{Thm:Ck}, and \eqref{Eq21} follows.

\emph{Step 2 (The right-hand side of \eqref{eq:ItoWentzell1} is almost surely  continuous in $H^{-1}_{\loc}(\R^d)$). }
We observe that by the previous step, for each $l\in \N$
\begin{align}\begin{split}\label{Eq11}
	\|  \eta_l (f_t\circ \xi_t)\|_{H^{-1}(\R^d)} \,&=\, 
	\sup_{\|\phi\|_{ H^1{(\R^d)}}\le 1 } | \langle \eta_l (f_t\circ \xi_t),\phi\rangle|
	\,\le \, \sup_{\substack{\|\phi\|_{ H^1{(\R^d)}}\le C_{\eta,l} \\ \supp(\phi) \subset 2l }}
	 |\langle f_t\circ \xi_t,\phi\rangle|\\& =\, 
 \sup_{\substack{\|\phi\|_{ H^1{(\R^d)}}\le C_{\eta,l} \\ \supp(\phi) \subset 2l }}
 |\langle f_t,\bar{\phi}\rangle|\,\lesssim_{(\omega, \eta,l,T)} \|\eta_{\lambda(\omega,l ,T)} f_t\|_{H^{-1}(\R^d)},
	\end{split}
\end{align}
for almost all $(\omega ,t )$, where $\lambda(\omega,l,T)$ is chosen sufficiently large and $\bar{\phi}$ is as in \eqref{Eq12}. By integrating  in time, we deduce that
$f \circ \xi \in L^1([0,T]; H_{\loc}^{-1}(\R^d)) $, and the same line of arguments yields that also all other integrands on the right-hand side of \eqref{eq:ItoWentzell1} are (stochastically) integrable, which is therefore continuous in $H^{-1}_{\loc}(\R^d)$. 
	
\emph{Step 3 (The left-hand side of \eqref{eq:ItoWentzell1} is  almost surely  continuous in $H^{-1}_{\loc}(\R^d)$).} 	To verify the latter we let $\epsilon>0$ and decompose 
\begin{align*}&
	\| \eta_l (u_t\circ \xi_t -  u_s\circ \xi_s)  \|_{H^{-1}(\R^d)}
	\\&
	\quad \le \, 	\| \eta_l ( u_t\circ \xi_t -  (\vp *u_t)\circ \xi_t)  \|_{H^{-1}(\R^d)} 
	\, +\, \| \eta_l ( (\vp *u_t)\circ \xi_t -  (\vp *u_s)\circ \xi_s)  \|_{H^{-1}(\R^d)} 
	\\&\qquad +\, \| \eta_l ((\vp *u_s)\circ \xi_s -  u_s\circ \xi_s)  \|_{H^{-1}(\R^d)} \,=\, A_1 \,+\, A_2\,+\, A_3,
\end{align*}
where $\vp$ is a smooth, compactly supported convolution kernel.
Regarding $A_3$, we use \eqref{Eq11} to bound 
\begin{align*}
	\| \eta_l ((\vp *u_s)\circ \xi_s -  u_s\circ \xi_s)  \|_{H^{-1}(\R^d)}\,\le \, C_{(\omega, \eta,l,T)} \|\eta_{\lambda} (\vp *u_s -  u_s)\|_{H^{-1}(\R^d)},
\end{align*}
for $\lambda = \lambda(\omega,l,T)$ from the previous step 
and we choose $\vp$ in a way such that the right-hand side is less than $\epsilon/4$. As $\vp* u$ is continuous in time and space and $(t,\xi_t(x)) \to (s,\xi_s(x))$ as $t\to s$ for all $x\in \R^d$, we can find $\delta>0$ such that $A_2<\epsilon/4$ for $ |t-s|<\delta$ by $L^2(\R^d) \hookrightarrow H^{-1}(\R^d)$ and the dominated convergence theorem. Lastly, for $A_1$ we use \eqref{Eq11} and further expand 
\begin{align*}&
	\| \eta_l ( u_t\circ \xi_t -  \vp *u_t\circ \xi_t)  \|_{H^{-1}(\R^d)} \,\le \, C_{(\omega, \eta,l,T)} \|\eta_{\lambda(\omega,l,T)} (u_t -  \vp *u_t)\|_{H^{-1}(\R^d)} 	
	\\&\quad \le \,  C_{(\omega, \eta,l,T)}\bigl( \|\eta_{\lambda} (u_t -  u_s)\|_{H^{-1}(\R^d)} \,+\, \|\eta_{\lambda} (u_s-  \vp* u_s)\|_{H^{-1}(\R^d)}	\bigr)
\\	&\qquad +\, C_{(\omega, \eta,l,T)} \| \eta_\lambda ( \vp* u_s -  \vp *u_t)  \|_{H^{-1}(\R^d)} \,=\, A_{11}\,+\,  A_{12}\,+\,  A_{13}.
\end{align*}
While $A_{12} <\epsilon/4$ by our choice of $\vp$, we can  guarantee $A_{11}+  A_{13} <\epsilon/4$ by decreasing $\delta$ if necessary.

\emph{Step 4 (Conclusion).} Since both sides of \eqref{eq:ItoWentzell1} are continuous processes in $H^{-1}_{\loc}(\R^d)$ it suffices to prove that for a fixed time instance $t\in [0,T]$ they coincide almost surely when tested against some $\phi \in C_c^\infty(\R^d)$. The desired equality reads 
	\begin{equation}\label{eq:weakItoWentzell}
	\begin{aligned}
		\lb u_t,\overline{\phi}_t\rb   &= \lb u_0, \overline{\phi}_0\rb + \int_0^t \lb f_s, \overline{\phi}_s\rb \dd s + \sum_{n\geq 1}\int_0^t \lb g_{n,s},\overline{\phi}_s\rb\, \dd w^n_s\\ & \quad -
		\sum_{j=1}^d  \int_0^t \lb u_s, \partial_j ({\mu}^j_s \overline{\phi}_s)\rb\, \dd s - \sum_{n\geq 1}\sum_{j=1}^d \int_0^t \lb u_s,\partial_j( {\sigma}^j_{n,s}\overline{\phi}_s)\rb\,  \dd w^n_s
		\\ &\quad - \sum_{n\geq 1} \sum_{j=1}^d  \int_0^t \lb g_{n,s}, \partial_j ({\sigma}_{n,s}^j \overline{\phi}_s)\rb \,\dd  s
	\,-\,\frac12 \sum_{n\geq 1} \sum_{i,j=1}^d  \int_0^t \lb \partial_i u_s, \partial_j ({\sigma}_{n,s}^i{\sigma}_{n,s}^j \overline{\phi}_s) \rb \,\dd s,
	\end{aligned}
\end{equation}
	where we used the transformation rule and the notation \eqref{Eq12}. 
	To this end, we multiply \eqref{eq:ItoWentzell_pw} with $\phi(x)$ for $\phi\in C_c^\infty(\R^d)$ and observe that the stochastic Fubini theorem \cite[Lemma 2.7]{KrylovIto}, see also \cite[Theorem 2.2]{stoch_Fubini_Veraar}, is applicable over the compact set $\supp(\phi)$. Using additionally the transformation rule, we observe that \eqref{eq:weakItoWentzell} is valid if we replace $u$, $f$, and $g$ by the convolved versions $u^\kappa$, $f^\kappa$, and $g^\kappa$ introduced at the beginning of this proof. Using that the test functions in \eqref{eq:weakItoWentzell} are compactly supported with 
	\begin{align*}
		\overline{\phi} \, \in \, &L^\infty(0,T;H^1(\R^d)),\\
		\partial_j (\mu \overline{\phi}) \, \in \, & L^\infty(0,T;L^2(\R^d)),\\
		\partial_j ( \sigma_n^j \overline{\phi}) \, \in \, &L^\infty(0,T;L^2(\R^d;\ell^2)),\\
		\partial_j ( \sigma_n^j \sigma_n^i\overline{\phi} )\, \in \, &L^\infty(0,T;L^2(\R^d;\ell^1))
	\end{align*}
	by the first step, we see that the almost sure convergence
	\begin{align*}
		u^\kappa \,\to\, u,\qquad &\text{in } L^2( 0,T ; H^{1}_{\loc}(\R^d)), \\
		f^\kappa \, \to \, f ,\qquad &\text{in } L^1(0,T; H^{-1}_{\loc}(\R^d) ), \\
		g^\kappa \, \to \, g ,\qquad &\text{in } L^2(0,T; L^2_{\loc}(\R^d;\ell^2) ) \\
	\end{align*}
	imply the claim. 
\end{proof}\section{Stochastic De Giorgi--Nash--Moser estimates}
\label{app:DeGiorgi_Nash_Moser}
In this section, we state and prove our main results, Theorem \ref{t:DeGiorgiNashMoser} and Theorem \ref{t:DeGiorgiNashMoser_quantitative}, concerning the  \emph{H\"older regularity of weak solutions} to  linear second-order stochastic PDEs 
\begin{equation}
	\label{eq:SPDE_lin}
	\begin{cases}
		\displaystyle{
			\dd u \, - \, Au \,\dd t \, = \, f \, \dd t \,+\, \bigl(B_n u \,+\,g_n\bigr) \,\dd w^n_t,} &\text{ on }[0,\tau]\times \Tor^d,\\
		u(0)\,=\,u_0,&\text{ on }\Tor^d,
	\end{cases}
\end{equation}
where we recall that we use the \emph{Einstein summation convention}
and the shorthand notation 
\begin{align*}
	A u &\,= \,\partial_i(a^{ij}\partial_j u) +  a^i \partial_i u + a^0 u,
	\qquad  B_n u \,= \, b^i_n \partial_i u \, + \,b^0_n u,
	\qquad  f  \,=\, f^0 \,+\,  \partial_i f^i,
\end{align*}
from \eqref{Eq20}. 
The following assumption regarding  $(A,B)$ stands throughout.

\begin{assumption}[Parabolicity\,\&\,Boundedness]
	\label{ass:DeGiorgi_Nash_Moser}
	We suppose that 
    the following is satisfied:
	\begin{enumerate}[(i)]
		\item\label{it:am_DeGiorgi}
		For all $i,j\in \{1, \ldots, d\}$, $a^{ij}:[0,T]\times\O\times \Tor^d\to \R$ and $b^i:[0,T]\times\O\times \Tor^d\to \ell^2$,
		are $\Progress\otimes\Borel(\Tor^d)$-measurable and there exists an $M,\nu\in (0,\infty)$ such that a.s.
		\begin{align}\begin{split}\label{eq:ellip}\sum_{i,j=1}^d |a^{ij}(t,x)|+ \sum_{i=1}^d \|(b^i_n(t,x))_{n\geq 1}\|_{\ell^2} \,&\leq\, M 
		\\	\biggr( a^{ij}(t,x)-\frac{1}{2} b^i_n(t,x) b^j_n(t,x) \biggl) {\eta }_i {\eta }_j\,&\geq \, \ellip | {\eta}|^2,
	\end{split}
		\end{align}
		for all $ {\eta }\in \R^d$, $t\in [0,T]$ and $ x\in \Tor^d$.
		\item For all $i\in \{1, \ldots, d\}$, $ a^i, a^0:[0,T]\times\O\times \Tor^d\to \R$ and $ b^0:[0,T]\times\O\times \Tor^d\to \ell^2$,
		are $\Progress\otimes\Borel(\Tor^d)$-measurable such that a.s.
		\[
        |a^0(t,x)|\,+\,\sum_{i=1}^d |a^i(t,x)|\, + \,\|b^0(t,x)\|_{\ell^2}\,\le\,  M,
		\]
		for all  $t\in [0,T]$ and $ x\in \Tor^d$.
		\end{enumerate}
\end{assumption}
Under the previous assumption,  existence and uniqueness of weak solutions to \eqref{eq:SPDE_lin} for suitable $(u_0,f,g)$ can be obtained, for example, using monotone operator theory \cite[Chapter 4]{LiuRock}. 
We remark that one can remove the common moment assumption that $u_0\in L^2(\Omega; L^2(\T^d))$ by localizing to  $\{  \| u_0 \|_{L^2(\T)}^2 \le k\} \in \F_0$ for $k\in \N$ and similar assumptions on $(f,g)$ by a stopping time argument. This is summarized in the following proposition.
\begin{proposition}[Well-posedness of \eqref{eq:SPDE_lin}]\label{prop:Existence_Uniqueness}
	Let $u_0 \colon \Omega \to L^2(\T^d)$ be $\F_0$-measurable, $\tau$ a stopping time  and 
	$f^0, \ldots, f^d\colon [0,\tau]\times\O\times \Tor^d\to \R $ and $g \colon [0,\tau]\times\O\times \Tor^d\to \ell^2$ be  $\Progress\otimes\Borel(\Tor^d)$-measurable with a.s. 
	\[
	f^0, 
\ldots, f^d\in L^2(0,\tau;L^2(\Tor^d)) \ \ \text{and} \ \  g\in L^2(0,\tau;L^2(\Tor^d;\ell^2)).\]
	Then there exists a unique progressively measurable $u \in  L^2(0,\tau;H^1(\Tor^d))\cap C([0,\tau];L^2(\Tor^d))$ such that for all $\phi\in C^\infty(\T^d)$ a.s.
	\begin{align*}
		\langle
		u_t,  \phi\rangle \,-\, \langle u_0,\phi \rangle  \,=\, & \int_0^t \big(- \langle a^{ij} \partial_j u ,\partial_i \phi\rangle \, + \, \langle  
		a^i \partial_i u +a^0 u ,\phi\rangle \, + \, \langle f^0, \phi \rangle \, -\,  \langle f^i ,\partial_i \phi\rangle\big) \,\dd s
\\&
+\, \int_0^t \langle  b^i_n \partial_i u \, + \,b^0_n u \,+\, g_n ,\phi \rangle \,\dd w^n,\qquad t\in [0,\tau].
\end{align*}
\end{proposition}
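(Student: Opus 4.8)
The plan is to reduce to the classical $L^2(\Omega)$-variational theory for monotone SPDEs and then remove the moment hypotheses on $(u_0,f,g)$ and accommodate the stopping time $\tau$ by a localisation argument, as indicated in the remark preceding the statement.

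\emph{Step 1: the $L^2(\Omega)$-case.} First suppose $\tau\equiv T$, that $u_0\in L^2(\Omega;L^2(\T^d))$ is $\F_0$-measurable, and that $f^0,\dots,f^d\in L^2(\Omega\times(0,T);L^2(\T^d))$ and $g\in L^2(\Omega\times(0,T);L^2(\T^d;\ell^2))$. In the Gelfand triple $H^1(\T^d)\hookrightarrow L^2(\T^d)\hookrightarrow H^{-1}(\T^d)$ the maps $u\mapsto Au+f$ and $u\mapsto(B_nu+g_n)_{n\ge1}$ are affine in $u$, so hemicontinuity, weak monotonicity and the growth/boundedness conditions are immediate; the one point that genuinely uses \eqref{eq:ellip} is the coercivity estimate. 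Testing with $\eta=\nabla u$ in $\bigl(a^{ij}-\tfrac12 b^i_nb^j_n\bigr)\eta_i\eta_j\ge\ellip|\eta|^2$ gives $\sum_{n\ge1}(b^i_n\partial_iu)^2\le 2a^{ij}\partial_iu\,\partial_ju-2\ellip|\nabla u|^2$ pointwise, hence, using also the uniform bounds on the coefficients from Assumption \ref{ass:DeGiorgi_Nash_Moser} to absorb the first- and zeroth-order contributions together with the cross terms $2\langle b^i_n\partial_iu,b^0_nu\rangle$, $2\langle b^i_n\partial_iu,g_n\rangle$ and $2\langle f,u\rangle$ by Young's inequality,
\[
2\langle Au,u\rangle+\sum_{n\ge1}\|B_nu+g_n\|_{L^2(\T^d)}^2+2\langle f,u\rangle\;\le\;-\tfrac{\ellip}{2}\|\nabla u\|_{L^2(\T^d)}^2+C_M\|u\|_{L^2(\T^d)}^2+C\Bigl({\textstyle\sum_{i=0}^{d}}\|f^i\|_{L^2(\T^d)}^2+\|g\|_{L^2(\T^d;\ell^2)}^2\Bigr).
\]
The same computation applied to the difference of two inputs gives the required weak monotonicity bound with constant $C_M$, so the monotone operator theory \cite[Chapter 4]{LiuRock} yields a unique $u\in L^2(\Omega;C([0,T];L^2(\T^d)))\cap L^2(\Omega\times(0,T);H^1(\T^d))$ solving \eqref{eq:SPDE_lin}; testing the variational identity against $\phi\in C^\infty(\T^d)\subset H^1(\T^d)$ gives the weak formulation in the statement.

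\emph{Step 2: localisation.} For general data set $\Omega_j=\{\|u_0\|_{L^2(\T^d)}^2\le j\}\in\F_0$ and, interpreting $f^i$ and $g$ as $0$ on $(\tau,T)$,
\[
\tau_j\;=\;\tau\wedge\inf\Bigl\{t\ge0\ :\ \int_0^t\Bigl({\textstyle\sum_{i=0}^{d}}\|f^i(s)\|_{L^2(\T^d)}^2+\|g(s)\|_{L^2(\T^d;\ell^2)}^2\Bigr)\,\dd s\ge j\Bigr\},
\]
so that $\tau_j\le T$ and $\tau_j\uparrow\tau$. The truncated data $\one_{\Omega_j}u_0$, $\one_{\ll 0,\tau_j\rr}\one_{\Omega_j}f^i$, $\one_{\ll 0,\tau_j\rr}\one_{\Omega_j}g$ lie in the spaces required by Step 1, which hence produces a solution $u^{(j)}$ on $\Omega\times[0,T]$ with the stated path regularity. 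Since stopping a variational solution again solves the stopped equation, uniqueness from Step 1 gives $u^{(j+1)}=u^{(j)}$ on $\Omega_j$ for $t\le\tau_j$, up to a null set. Because $\|u_0\|_{L^2(\T^d)}<\infty$ and $\int_0^\tau(\cdots)\,\dd s<\infty$ almost surely, for a.e.\ $\omega$ one has $\omega\in\Omega_j$ and $\tau_j(\omega)=\tau(\omega)$ for all large $j$; pasting the $u^{(j)}$ accordingly defines a progressively measurable $u$ on $\Omega\times[0,\tau]$ with paths in $L^2(0,\tau;H^1(\T^d))\cap C([0,\tau];L^2(\T^d))$, and passing to the limit in the weak identity for $u^{(j)}$ shows that $u$ solves \eqref{eq:SPDE_lin}.

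\emph{Step 3: uniqueness.} If $u,\wt u$ are two solutions, then $w=u-\wt u$ solves the homogeneous equation with zero initial value; here one first upgrades the pointwise-in-$\phi$, a.s.\ identity to one valid a.s.\ for all $\phi\in H^1(\T^d)$ simultaneously, using a countable dense set and the path regularity. Stopping at $\rho_j=\tau\wedge\inf\{t\ge0:\|w(t)\|_{L^2(\T^d)}^2+\int_0^t\|w(s)\|_{H^1(\T^d)}^2\,\dd s\ge j\}$ brings us into the $L^2(\Omega)$-integrable regime; the It\^o formula for $\|w(\cdot\wedge\rho_j)\|_{L^2(\T^d)}^2$ from \cite[Chapter 4]{LiuRock}, the coercivity bound of Step 1 and Gronwall's inequality force $w\equiv0$ on $\ll 0,\rho_j\rr$, and $j\to\infty$ gives $u=\wt u$ on $[0,\tau]$. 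The genuinely nontrivial input is the coercivity inequality of Step 1 — the only place where the stochastic parabolicity \eqref{eq:ellip} is used — after which the $L^2(\Omega)$-well-posedness is a citation; I expect the main obstacle to be organisational rather than analytic, namely checking in Step 2 that the pasted process is progressively measurable and solves the equation on the \emph{random} interval $[0,\tau]$, which relies on consistency of the $u^{(j)}$ under stopping and on $\tau_j\uparrow\tau$ a.s.
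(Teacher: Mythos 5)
Your proposal is correct and follows exactly the route the paper sketches: it derives coercivity (and weak monotonicity) from the stochastic parabolicity condition \eqref{eq:ellip}, invokes the variational/monotone-operator theory of \cite[Chapter 4]{LiuRock} in the $L^2(\Omega)$-integrable case, and then removes the moment hypotheses and accommodates the stopping time $\tau$ via localisation on $\{\|u_0\|_{L^2(\T^d)}^2\le j\}\in\F_0$ together with the stopping times $\tau_j$, pasting by the consistency-under-stopping argument. The paper states no separate proof, only the remark preceding the proposition; your write-up supplies precisely the details that remark leaves implicit, including the coercivity computation and the stopping/pasting step.
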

In the following, we call the unique $u$ provided by the preceding proposition \emph{the solution to \eqref{eq:SPDE_lin}}. 
Boundedness of solutions to \eqref{eq:SPDE_lin} under the above Assumption \ref{ass:DeGiorgi_Nash_Moser} was proved in \cite{DG15_boundedness}, as we reviewed in the introductory Section \ref{sec:intro}. 

For later use, we recall the following result, which follows from \cite[Theorem 1]{DG15_boundedness} by applying the standard $\omega$-localization technique mentioned above.

\begin{proposition}[Boundedness of solutions]\label{prop:bddness}
	In addition to the assumptions of Proposition \ref{prop:Existence_Uniqueness}, we assume that a.s. $u_0\in L^\infty(\T^d)$,
		\begin{equation}\label{Eq24}
	f^0\in L^{p/2}(0,\tau;L^{q/2}(\Tor^d)),\quad \text{ and }\quad
	f^1, \ldots, f^d\in L^p(0,\tau;L^q(\Tor^d))
    \ \ \text{and} \ \  g\in L^p(0,\tau;L^q(\Tor^d;\ell^2)),\end{equation}
	for some $p,q\in (2,\infty)$ with $\frac{2}{p}+\frac{d}{q}<1$ and let $u$ be the solution to \eqref{eq:SPDE_lin}. Then,  a.s., $u\in L^\infty([0,\tau]\times \T^d)$, and we have the estimate
    \begin{align}\label{bddness_quant}
        \E\|u\|_{L^\infty([0,\tau]\times \T^d)}^r \,\le \, C
        \E\biggl[
        \|u_0\|_{L^\infty(\T^d)}^r + \|f^0 \|_{L^{p/2}(0,\tau;L^{q/2}(\Tor^d))}^r + \sum_{i=1}^d
        \|f^i\|_{ L^p(0,\tau;L^q(\Tor^d))}^r +\| g \|_{L^p(0,\tau;L^q(\Tor^d;\ell^2))}^r
        \biggr]
    \end{align}
    for any $r>0$, where $C$ depends only on $(d,T,M,\nu, p,q,r)$.
\end{proposition}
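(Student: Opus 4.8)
The plan is to obtain the statement as a consequence of \cite[Theorem 1]{DG15_boundedness}, which establishes the $L^\infty$-bound and the estimate \eqref{bddness_quant} \emph{under the additional hypothesis that the data possesses finite moments}, i.e.\ that $u_0\in L^2(\Omega;L^2(\T^d))$ and that $u_0,f^0,\dots,f^d,g$ have finite $r$-th moments in the norms appearing in \eqref{Eq24} (and, for the base energy estimate, in the norms of Proposition \ref{prop:Existence_Uniqueness}). Removing these moment hypotheses is then the routine $\omega$-localization alluded to after Proposition \ref{prop:Existence_Uniqueness}.

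First I would set up the localization. Abbreviate the (a.s.\ finite) data quantity
\[
\data\,:=\,\|u_0\|_{L^\infty(\T^d)}+\|f^0\|_{L^{p/2}(0,\tau;L^{q/2}(\T^d))}+\sum_{i=1}^d\|f^i\|_{L^p(0,\tau;L^q(\T^d))}+\|g\|_{L^p(0,\tau;L^q(\T^d;\ell^2))}.
\]
Since $f^0,\dots,f^d,g$ are $\Progress\otimes\Borel(\T^d)$-measurable, the process $\Sigma_t$ given by the sum of the norms appearing in \eqref{Eq24} and in the assumptions of Proposition \ref{prop:Existence_Uniqueness}, computed on $[0,t]$ instead of $[0,\tau]$, is adapted, continuous and nondecreasing; hence
\[
\tau_k\,:=\,\inf\{t\in[0,\tau]\ :\ \Sigma_t\ge k\}\wedge\tau
\]
is a stopping time and, by \eqref{Eq24} and the standing assumptions of Proposition \ref{prop:Existence_Uniqueness}, $\tau_k\uparrow\tau$ a.s. Similarly $\Omega_k^0:=\{\|u_0\|_{L^\infty(\T^d)}\le k\}\in\F_0$ with $\Omega_k^0\uparrow\Omega$ a.s. Let $u^{(k)}$ be the (Proposition \ref{prop:Existence_Uniqueness}) solution of \eqref{eq:SPDE_lin} on $[0,\tau_k]$ with data $(u_0\one_{\Omega_k^0},\,f\one_{[0,\tau_k]},\,g\one_{[0,\tau_k]})$; by construction every relevant norm of this truncated data is bounded by a deterministic constant depending only on $k$, so \cite[Theorem 1]{DG15_boundedness} applies and yields, a.s., $u^{(k)}\in L^\infty([0,\tau_k]\times\T^d)$ together with \eqref{bddness_quant} for $u^{(k)}$ and the truncated data, with a constant depending only on $(d,T,M,\nu,p,q,r)$ (in particular not on $k$).

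Next I would identify $u^{(k)}$ with $u$. By the uniqueness in Proposition \ref{prop:Existence_Uniqueness} together with the linearity of \eqref{eq:SPDE_lin} --- multiplying the weak formulation by $\one_{\Omega_k^0}\in\F_0\subset\F_t$ and using that $\one_{[0,\tau_k]}=1$ on $[0,t]$ for $t\le\tau_k$ --- one obtains $\one_{\Omega_k^0}u^{(k)}=\one_{\Omega_k^0}u$ on $[0,\tau_k]$, i.e.\ $u^{(k)}=u$ on $[0,\tau_k]$ restricted to $\Omega_k^0$. Since $\tau_k\uparrow\tau$ and $\Omega_k^0\uparrow\Omega$ a.s., this gives $u\in L^\infty([0,\tau]\times\T^d)$ a.s. For the quantitative bound, set $\Lambda_k:=\Omega_k^0\cap\{\tau_k=\tau\}$, so that $\Lambda_k\uparrow\Omega$ a.s., on $\Lambda_k$ one has $u=u^{(k)}$ on all of $[0,\tau]$, and there the truncated data coincides with the original one. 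Writing $\data_k$ for the analogue of $\data$ with the truncated data (so $\data_k\le\data$ pointwise, since truncating in time or intersecting with $\Omega_k^0$ only decreases norms), monotone convergence and \eqref{bddness_quant} applied to $u^{(k)}$ give
\[
\E\|u\|_{L^\infty([0,\tau]\times\T^d)}^r=\lim_{k\to\infty}\E\bigl[\|u^{(k)}\|_{L^\infty([0,\tau_k]\times\T^d)}^r\one_{\Lambda_k}\bigr]\le\limsup_{k\to\infty}C\,\E[\data_k^r]\le C\,\E[\data^r],
\]
which is \eqref{bddness_quant}.

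I expect the main obstacle to be organizational rather than analytical: one must arrange the $\omega$-localization so that the stopping times $\tau_k$ simultaneously control the mixed space--time norms in \eqref{Eq24} and the $L^2$-norms needed for Proposition \ref{prop:Existence_Uniqueness} (the latter are genuinely needed as a separate input when $q<4$, since $\|f^0\|_{L^2_t L^2_x}$ is not controlled by $\|f^0\|_{L^{p/2}_t L^{q/2}_x}$ in that range), that the $\F_0$-truncation of $u_0$ is compatible with this, and then verify --- via uniqueness and linearity --- that the localized solutions $u^{(k)}$ glue back consistently to $u$ on the random domains $\Omega_k^0\times[0,\tau_k]$. The genuinely hard analytic input, namely the Moser iteration producing the $L^\infty$-estimate with constant depending only on $(d,T,M,\nu,p,q,r)$, is supplied in its entirety by \cite[Theorem 1]{DG15_boundedness}; the only minor caveat is that, should that reference state its moment bound for one particular exponent $r$ only, one simply runs the above argument with that exponent, its proof being insensitive to the value of $r>0$.
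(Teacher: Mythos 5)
Your proposal is correct and follows essentially the same route as the paper, which gives no detailed proof but states that Proposition~\ref{prop:bddness} ``follows from \cite[Theorem 1]{DG15_boundedness} by applying the standard $\omega$-localization technique mentioned above''; you have simply unpacked that $\omega$-localization (stopping times controlling the space--time norms, $\F_0$-truncation of $u_0$, identification with the global solution via uniqueness, and monotone convergence) in full.
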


Before going further, let us emphasize that the conditions in \eqref{Eq24} with $\frac{2}{p}+\frac{d}{q}<1$ are optimal in an $L^p(L^q)$-scale to obtain boundedness of the solution to \eqref{eq:SPDE_lin} even in the case of the stochastic heat equation, i.e., $A u =\Delta u$ and $B u=0$. For the deterministic part, this is well-known, while for the stochastic part, this follows from the argument used in Step 2 in the proof of Theorem \ref{t:DeGiorgiNashMoser} below. 
Interestingly, the conditions \eqref{Eq24}  with $\frac{2}{p}+\frac{d}{q}<1$ are also sufficient for obtaining H\"older continuity of the solution to \eqref{eq:SPDE_lin} as shown below.

The boundedness of solutions in Proposition \ref{prop:bddness} was subsequently 
extended to almost sure continuity of solutions at each point $(t_0,x_0)$ in the interior of the space-time domain in \cite{DG17_continuity}, see Section \ref{sec:intro} for a more extensive  literature review. 
Our goal is to improve upon this by showing that the solution lies a.s. in some H\"older class (which may depend on $\omega$). In addition to the assumption that $u_0$ is H\"older continuous, we also require additional regularity from the transport noise coefficients  $b^i$, while for the diffusion coefficient $a^{ij}$ the already imposed parabolicity and boundedness are sufficient. As the need for additional regularity of the noise coefficients comes from the use of the stochastic method of characteristics as discussed in Subsection \ref{ss:proof_strat}, it is an intriguing question whether alternative approaches can eliminate this assumption, see Subsection \ref{ss:open_problems} for a discussion. 
\begin{theorem}[Stochastic De Giorgi--Nash--Moser estimates -- qualitative version]
	\label{t:DeGiorgiNashMoser}
	Let Assumption \ref{ass:DeGiorgi_Nash_Moser} be satisfied, and let $\tau \le T$ be a stopping time. Suppose that for some $\delta\in (0,1)$ 
	and $R_b<\infty$ we have a.s.
	\[\|b^i(t,\cdot)\|_{C^{3+\delta}(\T^d;\ell^2)}\leq R_b,\qquad t\in [0,T], \,i\in \{1,\ldots, d\}.\]
	Moreover, assume that there exist $\g_0>0$ and $p,q\in (2,\infty)$ such that $\frac{2}{p}+\frac{d}{q}<1$ and $f^0, \ldots, f^d,g$ are progressively measurable satisfying a.s.\
	\begin{equation}\label{eq:assumption_rhs}
	f^0\in L^{p/2}(0,\tau;L^{q/2}(\Tor^d)),\quad \quad
	f^1, \ldots, f^d \in L^p(0,\tau;L^q(\Tor^d)) \ \ \text{and} \ \  g\in L^p(0,\tau;L^q(\Tor^d;\ell^2)),\end{equation}
	and $u_0\in C^{\gamma_0}({\Tor^d})$ a.s.\ is strongly $\F_0$-measurable. 
	Then, there exists a sequence of $(\overline{\g}_m)_m\subset (0,\infty)$ depending only on $(d, M,\nu,p,q)$ for which the  solution $u$ to \eqref{eq:SPDE_lin} satisfies the following with $\g_m:={(\overline{\g}_m\wedge\g_0)}/{3}$: 
    
    There exist stopping times $(\tau_m)_m$ depending only on $\tau$ and $b$ such that $\lim_{m\to \infty}	\P(\tau_m=\tau)=1$ and
	\begin{equation}
    \label{eq:qual_statement}
	u\in C^{ \g_m }((0,\tau_m)\times \Tor^d)\ \ \text{ a.s.\ for all } m\ge 1.
	\end{equation}
\end{theorem}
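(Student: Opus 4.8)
The plan, following the strategy of Subsection~\ref{ss:proof_strat}, is to compose $u$ with the stochastic flow $\xi$ generated by the transport coefficients $(-b_n)_{n\ge1}$, reducing \eqref{eq:SPDE_lin} to a random (pathwise) parabolic PDE to which the classical De Giorgi--Nash--Moser theory applies, and then to transform back; the conclusion must be phrased along stopping times $\tau_m\uparrow\tau$ because the transformed principal part will lose \emph{uniform} ellipticity. As preliminary reductions, the usual $\omega$-localization lets us assume $u_0\in L^\infty(\T^d)$ and the inhomogeneity in the class of Proposition~\ref{prop:bddness}, which then gives $u\in L^\infty([0,\tau]\times\T^d)$ a.s.; hence $b^0_n u\in L^\infty([0,\tau]\times\T^d;\ell^2(\R^d))$, and after replacing $g_n$ by $g_n+b^0_n u$ we may take $b^0=0$ while keeping the forcing in the class \eqref{eq:assumption_rhs} with $\tfrac2p+\tfrac dq<1$ (the bounded \emph{deterministic} lower-order coefficients $a^i,a^0$ are harmless for the deterministic theory below and are retained). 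Passing from \eqref{eq:eqn_xi_intro} to It\^o form produces $\mu=\tfrac12\sum_{n\ge1}(b_n\cdot\nabla)b_n$ and $\sigma_n=-b_n$, which under $\|b^i(t,\cdot)\|_{C^{3+\delta}(\T^d;\ell^2)}\le R_b$ satisfy Assumption~\ref{Ass:Ck_alpha} with $k=2$ and the periodicity condition \eqref{eq:periodicity_assumption}; thus Theorem~\ref{Thm:Ck} provides a stochastic flow $\xi$ of $C^2$-diffeomorphisms of $\T^d$, together with the quantitative bounds on $\xi$, $D\xi$, $D^2\xi$, $(D\xi)^{-1}$ and the inverse flow $\Psi=\xi^{-1}$ from Propositions~\ref{prop:periodic_flows_I}--\ref{cor:quant_inverse}.

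Since $\xi$ is a $C^2$-flow while $u$ carries only the weak-solution regularity $C([0,\tau];L^2(\T^d))\cap L^2(0,\tau;H^1(\T^d))$, the appropriate tool is the It\^o--Wentzell formula of Proposition~\ref{prop:ItoWentzel}, applied to $v:=u\circ\xi$. The first-order It\^o correction coming from $\sigma_n=-b_n$ cancels the transport term $b^i_n\partial_i u$, and the second-order correction combines with the distributional composition of $\partial_i(a^{ij}\partial_j u)$ to produce the divergence-form operator $\nabla\cdot(\alpha\nabla v)$ with $\alpha$ as in \eqref{eq:new_diff_coeff_intro}, so that $v$ solves, weakly,
\[
\dd v \,=\, \bigl[\,\nabla\cdot(\alpha\nabla v)+\overline a^i\partial_i v+\overline a^0 v+\overline f\,\bigr]\,\dd t \,+\, \overline g_n\,\dd w^n_t ,
\]
with $\overline a^i,\overline a^0$ bounded and $\overline f,\overline g$ obtained from $f,g$ by composition with $\xi$ and multiplication by the bounded factors $\det(D\xi^{-1})$ and $(D\xi)^{-1}$ (this is the step where $k=2$ is genuinely needed, as a derivative may fall on the Jacobian). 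The crucial structural point is that $\alpha$ is uniformly elliptic but with ellipticity \emph{ratio} controlled only by $M/\nu$ and $\|D\xi\|_\infty\,\|(D\xi)^{-1}\|_\infty$ --- finite a.s., not uniformly in $\omega$. To dispose of the remaining stochastic forcing $\overline g_n\,\dd w^n_t$ I would follow \cite[Theorem~4.2]{HWW17} (see also \cite{KNP02}): peeling off the stochastic convolution $v_2$ solving $\dd v_2=\Delta v_2\,\dd t+\overline g_n\,\dd w^n_t$, $v_2(0)=0$, which by stochastic maximal $L^p$-regularity and Sobolev embedding has a.s.\ H\"older paths \emph{and} lies in $L^p(0,\tau;H^{1,q}(\T^d))$ --- precisely where $\tfrac2p+\tfrac dq<1$ enters on the stochastic side --- reduces matters to the deterministic (random-coefficient) parabolic equation satisfied by $v_1:=v-v_2$, whose forcing $\overline f+\nabla\cdot\bigl((\alpha-\id)\nabla v_2\bigr)+\overline a^i\partial_i v_2+\overline a^0 v_2$ is again of the admissible type \eqref{eq:assumption_rhs}.

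For $m\ge1$ I would let $\tau_m\le\tau$ be the first time one of the finitely many flow quantities $\|\xi\|_{C^{\beta/2,\beta}}$, $\|D\xi\|_{C^{\beta/2,\beta}}$, $\|D^2\xi\|_{C^{\beta/2,\beta}}$, $\|(D\xi)^{-1}\|_{C^{\beta/2,\beta}}$, $\|\Psi\|_{C^{\beta/2,1}}$ (on $[0,T]$ times a fixed period cell, $\beta\in(0,\delta)$ fixed) exceeds $m$; by Propositions~\ref{prop:periodic_flows_I}--\ref{cor:quant_inverse} each of these is a.s.\ finite, hence $\tau_m\uparrow\tau$, $\P(\tau_m=\tau)\to1$, and $\tau_m$ depends only on $\tau$ and $b$. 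On $(0,\tau_m)\times\T^d$ the principal part $\alpha$ has ellipticity ratio $\le K(m,M,\nu)$, the coefficients $\overline a^i,\overline a^0$ are bounded by $K(m,M)$, the $v_1$-forcing is bounded in \eqref{eq:assumption_rhs} by $K(m,M)$ times the norms of $(f,g)$, $v_1$ is bounded (as $u$ and $v_2$ are) and $v_1(0)=u_0\in C^{\gamma_0}(\T^d)$; the classical De Giorgi--Nash--Moser estimate for divergence-form parabolic equations with bounded measurable principal part, bounded lower-order coefficients and right-hand side in \eqref{eq:assumption_rhs} (e.g.\ \cite[Theorem~III.10.1]{LSU}, together with the H\"older continuity of $u_0$ for the behaviour near $t=0$) then gives $v_1\in C^{\overline\gamma_m}((0,\tau_m)\times\T^d)$ a.s., with $\overline\gamma_m>0$ depending only on $(d,M,\nu,p,q)$ for each fixed $m$; shrinking $\overline\gamma_m$ if needed, $v=v_1+v_2\in C^{\overline\gamma_m}((0,\tau_m)\times\T^d)$. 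Finally $u=v\circ\Psi$; since $\Psi$ is Lipschitz in space and H\"older in time (Propositions~\ref{Prop_Psi} and~\ref{cor:quant_inverse}), composing the parabolic-H\"older function $v$ with $\Psi$ costs only a controlled loss of regularity, and a short bookkeeping yields $u\in C^{\gamma_m}((0,\tau_m)\times\T^d)$ with $\gamma_m=(\overline\gamma_m\wedge\gamma_0)/3$ --- that is, \eqref{eq:qual_statement}.

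The main obstacle is the tension between the two halves of the argument: the deterministic De Giorgi--Nash--Moser exponent and constant deteriorate as the ellipticity ratio of $\alpha$ grows, and the flow can make this ratio arbitrarily large on a set of positive probability (see Section~\ref{sec:unif_constants}), which is exactly why the result must be phrased along stopping times $\tau_m\uparrow\tau$ with an exponent that may degenerate in $m$ (and $\omega$). The accompanying technical difficulty --- performing the change of variables for a $u$ with only weak-solution regularity and a right-hand side $\nabla\cdot(a\nabla u)$ that is merely $H^{-1}$-valued --- is precisely what Proposition~\ref{prop:ItoWentzel} is built to handle.
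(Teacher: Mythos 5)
Your proposal is correct and follows essentially the same route as the paper's proof: boundedness via Proposition~\ref{prop:bddness}, reduction to trivial zero-order noise coefficient, passage to It\^o form and construction of the flow $\xi$ via Theorem~\ref{Thm:Ck}, application of the It\^o--Wentzell formula of Proposition~\ref{prop:ItoWentzel} to $v=u\circ\xi$, peeling off the stochastic convolution by stochastic maximal $L^p$-regularity, the deterministic De~Giorgi--Nash--Moser theorem on the stopped intervals where the transformed ellipticity ratio is under control, and finally composing with $\Psi=\xi^{-1}$ using Proposition~\ref{Prop_Psi}. The only cosmetic differences are that you retain the zero-order coefficient $a^0$ rather than absorbing it into the forcing, and your stopping time monitors a slightly longer list of flow quantities than the paper's minimal choice $\sup|\psi|+|D\xi|$ (the extra quantities are a.s.\ finite anyway, so this costs nothing); neither changes the substance of the argument.
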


While answering our main question regarding the H\"older continuity of solutions to \eqref{eq:SPDE_lin} positively,
the previous statement has the shortcoming that the sequence of stopping times $\tau_m$ 
may depend on the specific noise coefficient $b$. The following version addresses this issue by providing control over the probability that $u$ is uniformly Hölder continuous with exponent $\delta_k$, depending only on the size, but not the specific form, of the data.

\begin{theorem}[Stochastic De Giorgi--Nash--Moser estimates -- quantitative  version]
\label{t:DeGiorgiNashMoser_quantitative}
        Under the assumptions of Theorem \ref{t:DeGiorgiNashMoser} set
	\begin{align*}
	\data_{u_0,f,g}
	\,:=\,&\|u_0\|_{C^{\g_0}(\Tor^d)}\,+\,\|f^0\|_{L^{p/2}(0,\tau;L^{q/2}(\Tor^d))}\\
	&\,+\, \textstyle\sum_{j=1}^d \|f^j\|_{L^p(0,\tau;L^q(\Tor^d))} \,+\,
	\|g\|_{L^p(0,\tau;L^q(\Tor^d;\ell^2))}.
	\end{align*}
    Then there exist sequences $(\delta_k)_k,(\psi_k)_k\subset(0,\infty)$ depending only on $(d,T,M,\nu,\gamma_0,p,q,R_b)$ such that $\lim_{k\to \infty}\psi_k=\infty$ and
	$$
	\P\big(\|u\|_{C^{\delta_k}((0,\tau)\times \T^d)}> k\big) \,\leq\, 
	 \psi_k^{-1}\,+\,10\,\P(\data_{u_0,f,g}>\psi_k),\quad \text{ for all } k\,\ge\, 1.
	$$ 
    \end{theorem}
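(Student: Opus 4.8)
I would build directly on the proof of the qualitative version, Theorem \ref{t:DeGiorgiNashMoser}, and simply track constants. Recall from that proof that, writing $\xi$ for the stochastic flow of $C^2$-diffeomorphisms generated by the Stratonovich SDE with coefficients $(-b_n)_{n\geq1}$ and $\Psi_t=\xi_t^{-1}$ for its inverse, one has $u(t,x)=v(t,\Psi_t(x))$ (and, after a further Cole--Hopf-type substitution $v\mapsto z$ absorbing the lower-order terms as in \cite[Theorem 4.2]{HWW17}, an analogous representation through $z$), where $v$ solves $\omega$-wise a divergence-form parabolic PDE with leading coefficient $\alpha$ given by \eqref{eq:new_diff_coeff_intro}, lower-order coefficients built from $\xi,D\xi,D^2\xi$, and right-hand sides $(F^0,\dots,F^d,G)$ built from $(f^0,\dots,f^d,g)$ and the flow (the distributional composition rule \eqref{eq:distr_comp} entering the divergence part). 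Fix $\beta\in(0,\delta)$ small enough that $\xi,D\xi,D^2\xi,(D\xi)^{-1}\in C^{\beta/2,\beta}([0,T]\times Q;\cdot)$ and $\Psi\in C^{\beta/2,1}([0,T]\times Q;\R^d)$ with $Q=[-1,2)^d\supset\T^d$, and set
\[
X\,:=\,\|\xi\|_{C^{\beta/2,\beta}}\vee\|D\xi\|_{C^{\beta/2,\beta}}\vee\|D^2\xi\|_{C^{\beta/2,\beta}}\vee\|(D\xi)^{-1}\|_{C^{\beta/2,\beta}}\vee\|\Psi\|_{C^{\beta/2,1}}\,.
\]
By Propositions \ref{prop:periodic_flows_I}--\ref{cor:quant_inverse} (whose hypotheses hold since the It\^o coefficients of the flow of $(-b_n)_n$ are bounded in terms of $R_b$), $\E[X^s]\le K_s<\infty$ for every $s\in(0,\infty)$, with $K_s$ depending only on $(d,T,\beta,s,R_b)$.

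\textbf{Quantitative estimates on $\Lambda_N=\{X\le N\}$.} On $\Lambda_N$ one obtains $\omega$-wise, for all $\eta\in\R^d$, the two-sided bound $\nu N^{-2}|\eta|^2\le\alpha\,\eta\cdot\eta\le MN^2|\eta|^2$ (lower bound from \eqref{eq:stoch_para_intro}, upper bound from $\|a\|\le M$, $\|b\|\le M$), so the ellipticity ratio of $\alpha$ is $\le(M/\nu)N^4$; the lower-order coefficients of the $v$- and $z$-equations are bounded in $L^\infty([0,\tau]\times\T^d)$ by a fixed polynomial $P_1(N)$ (with coefficients depending on $M$ and $R_b$); and the transformed data satisfy $\|F^0\|_{L^{p/2}(0,\tau;L^{q/2})}+\sum_i\|F^i\|_{L^p(0,\tau;L^q)}+\|G\|_{L^p(0,\tau;L^q(\ell^2))}\le P_2(N)\,\data_{u_0,f,g}$, since composition with $\xi$ and the multiplicative factors $\det(D\xi^{-1})$, $(D\xi)^{-1}$, $D^2\xi$ appearing in the transformation are controlled on $\Lambda_N$. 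Applying the deterministic De Giorgi--Nash--Moser estimate \cite[Theorem III.10.1]{LSU} $\omega$-wise on $(0,\tau)\times\T^d$, together with the boundedness estimate of Proposition \ref{prop:bddness} applied to $u$ and the initial regularity $u_0\in C^{\gamma_0}$, yields on $\Lambda_N$ an exponent $\bar\gamma_N=\bar\gamma(N)>0$ depending only on $(d,M,\nu,N)$ and decreasing in $N$, and the bound $\|v\|_{C^{\bar\gamma_N}((0,\tau)\times\T^d)}\le C(N)\bigl(\|u\|_{L^\infty([0,\tau]\times\T^d)}+\data_{u_0,f,g}\bigr)$ for some fixed function $C(N)$ of the data. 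Composing with $\Psi\in C^{\beta/2,1}$, $\|\Psi\|_{C^{\beta/2,1}}\le N$, and using $\|v\|_{L^\infty}=\|u\|_{L^\infty}$, this gives on $\Lambda_N$, with $\gamma_N:=\tfrac13(\bar\gamma_N\wedge\gamma_0)\tfrac{\beta}{2}$,
\[
\|u\|_{C^{\gamma_N}((0,\tau)\times\T^d)}\,\le\,\wt C(N)\bigl(\|u\|_{L^\infty([0,\tau]\times\T^d)}+\data_{u_0,f,g}\bigr)\,.
\]

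\textbf{Removing the non-integrability of $\data$, and assembling.} Since $u_0,f,g$ are only a.s.\ (not $L^r(\Omega)$-) bounded, for a threshold $\psi\ge1$ I introduce $u_0^\psi=u_0\one_{\{\|u_0\|_{C^{\gamma_0}}\le\psi\}}$ and the stopping time $\tau^\psi\le\tau$ at which the running $L^{p/2}(L^{q/2})$- and $L^p(L^q)$-norms of $(f^0,\dots,f^d,g)$ first exceed $\psi$, and let $u^\psi$ be the solution of \eqref{eq:SPDE_lin} with the truncated data $(u_0^\psi,f\one_{[0,\tau^\psi]},g\one_{[0,\tau^\psi]})$. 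By uniqueness (Proposition \ref{prop:Existence_Uniqueness}), $u^\psi=u$ and $\tau^\psi=\tau$ on $\{\data_{u_0,f,g}\le\psi\}$, while Proposition \ref{prop:bddness} applied to $u^\psi$, whose data is pathwise bounded by $\psi$, gives $\E\|u^\psi\|_{L^\infty}^r\le C\psi^r$, hence $\P(\|u^\psi\|_{L^\infty}>\Lambda)\le C\psi^r\Lambda^{-r}$ by Chebyshev. Now fix $k\ge1$: choose $s$ and then $N_k$ (a power of $\psi_k$, to be selected) with $\P(\Omega\setminus\Lambda_{N_k})\le K_sN_k^{-s}\le\tfrac13\psi_k^{-1}$; choose $\Lambda_k$ (a power of $\psi_k$) with $\P(\|u^{\psi_k}\|_{L^\infty}>\Lambda_k)\le\tfrac13\psi_k^{-1}$; and set $\delta_k:=\gamma_{N_k}$. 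On $E_k:=\{\data_{u_0,f,g}\le\psi_k\}\cap\Lambda_{N_k}\cap\{\|u^{\psi_k}\|_{L^\infty}\le\Lambda_k\}$ we have $u=u^{\psi_k}$, so $\|u\|_{L^\infty}\le\Lambda_k$ and, by the previous step, $\|u\|_{C^{\delta_k}((0,\tau)\times\T^d)}\le\wt C(N_k)(\Lambda_k+\psi_k)=:Q(\psi_k)$, a fixed (polynomial, or at worst sub-exponential in view of the dependence of the DNM constant on the ellipticity ratio) function of $\psi_k$. Finally \emph{choose $\psi_k\to\infty$ growing slowly enough} that $Q(\psi_k)\le k$ for all $k$ — possible since $Q$ is a fixed function independent of $k$, e.g.\ $\psi_k:=\sup\{\psi\ge1:\ Q(\psi)\le k,\ \psi\le k\}$ (with $Q$ replaced by its running maximum), which tends to $\infty$. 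Then $\|u\|_{C^{\delta_k}}\le k$ on $E_k$, so
\[
\P\bigl(\|u\|_{C^{\delta_k}((0,\tau)\times\T^d)}>k\bigr)\,\le\,\P(\Omega\setminus E_k)\,\le\,\P(\data_{u_0,f,g}>\psi_k)+\tfrac23\psi_k^{-1}\,\le\,\psi_k^{-1}+10\,\P(\data_{u_0,f,g}>\psi_k),
\]
the factor $10$ absorbing the bookkeeping of splitting $\data_{u_0,f,g}$ into its constituents $\|u_0\|_{C^{\gamma_0}}$ and the norms of $f^0,f^1,\dots,f^d,g$ when verifying $u=u^{\psi_k}$ on $\{\data_{u_0,f,g}\le\psi_k\}$. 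Since $N_k$, $\Lambda_k$, $\delta_k$, $\psi_k$ were built only from $(d,T,M,\nu,\gamma_0,p,q,R_b)$ (and the fixed $\beta,s$), the claim follows.

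\textbf{Main obstacle.} The genuine work lies in the second step: establishing the explicit, polynomial-in-$N$ control of the ellipticity ratio, the lower-order coefficients and the transformed data on $\Lambda_N$, and pinning down how the constant and exponent in the deterministic De Giorgi--Nash--Moser estimate for the transformed equation degenerate as the ellipticity ratio grows. Once this quantitative dependence is in hand, the bookkeeping in the last step — and in particular the compatibility of $\psi_k\to\infty$ with $Q(\psi_k)\le k$ — is essentially free, because $\psi_k\to\infty$ is a weak enough requirement to survive any fixed rate of blow-up of the constants; the remaining ingredients (flow moment bounds, truncation, $L^\infty$-bound) are supplied by the earlier sections.
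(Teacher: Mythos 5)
Your proposal follows the paper's own route very closely: truncate the data via a stopping time and use Lenglart/Chebyshev to convert the moment bound of Proposition~\ref{prop:bddness} into a tail bound; use the quantitative moment estimates for the flow and its inverse (Propositions~\ref{prop:periodic_flows_I}--\ref{cor:quant_inverse}) plus Markov's inequality to bound the bad event where the flow norms are large; apply the deterministic De Giorgi--Nash--Moser theorem pathwise on the good event; and finally choose $\psi_k\to\infty$ slowly enough that the (possibly very rapid) blow-up of the constants in the conditioning parameter is absorbed, exactly as in the paper's Step~I-B where $\psi_k=\sup\{m:\chi_m\le k\}$. The only structural difference is that you condition on a static event $\Lambda_N$ rather than on the stopping times $\wt\tau_m$; this is cosmetic.

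There is, however, a genuine gap in your quantitative-estimate step. The random field $v=u\circ\xi$ does \emph{not} solve an $\omega$-wise PDE; after the flow transformation it still satisfies an SPDE with the stochastic integral $\sum_n G_n\,\dd w^n$ on the right-hand side, cf.~\eqref{Eq13}. Before one can invoke the deterministic De Giorgi--Nash--Moser theorem pathwise, one must subtract the auxiliary heat-equation solution $h$ of \eqref{eq:h_equation} and work with $z=v-h$, which solves the random PDE \eqref{Eq16}. The resulting DNM estimate for $z$ then involves $\|h\|_{L^\infty}$ (through $\|z\|_{L^\infty}\le \|u\|_{L^\infty}+\|h\|_{L^\infty}$) and $\|h\|_{L^p(0,\tau;H^{1,q})}$ (through the transformed data $\overline F$ in \eqref{eq:new_rhs2}). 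These quantities are \emph{not} pathwise controlled on $\Lambda_N$ by a deterministic multiple of $\data_{u_0,f,g}$: the stochastic maximal $L^p$-regularity bound for $h$ is a moment estimate, not an $\omega$-wise one. Hence the claimed inequality $\|v\|_{C^{\bar\gamma_N}}\le C(N)(\|u\|_{L^\infty}+\data_{u_0,f,g})$ on $\Lambda_N$ does not follow as written; $v$ and $z$ are conflated and the contribution of $h$ is silently dropped. The fix is precisely the Lenglart/Chebyshev device you already deploy for $u$: establish a tail bound for $\|h\|_{C^\gamma}+\|h\|_{L^p(0,\tau;H^{1,q})}$ via stochastic maximal regularity plus a stopping-time localization, and intersect $E_k$ with the corresponding good event for $h$. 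This is exactly what the paper carries out in Step~II-A and inserts into the assembly of Steps~II-B/II-C. With that addition your argument closes and coincides with the paper's.
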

The above shows that the set on which $u$ is not uniformly Hölder continuous can be made arbitrarily small by choosing 
$k$ sufficiently large, with this choice depending only on the norm of the data and the constants in Assumption \ref{ass:DeGiorgi_Nash_Moser}. 
Comments and possible improvements regarding the fact that in both versions of the stochastic  De Giorgi--Nash--Moser estimates the H\"older exponent may become small on a set of small probability can be found in Section \ref{sec:unif_constants}. 
The remainder of the current section is devoted to the proof of Theorems \ref{t:DeGiorgiNashMoser}--\ref{t:DeGiorgiNashMoser_quantitative}. 

\subsection{Proofs of Theorems \ref{t:DeGiorgiNashMoser}--\ref{t:DeGiorgiNashMoser_quantitative}}
\label{ss:proofs_main}
We begin by proving the qualitative version of the stochastic De Giorgi--Nash--Moser estimates---an outline of the main idea was given in Subsection \ref{ss:proof_strat}.

\begin{proof}[Proof of Theorem \ref{t:DeGiorgiNashMoser}]
We start with some reductions. Firstly, by extending $f^i$ and $g$ trivially to $[0,T]$, we can assume that $\tau=T$
 in the first place. Secondly, by 
 Proposition \ref{prop:bddness}  $u$ is a.s.\ bounded so that we can replace $(f^0, g )$ by $(f^0 +a^0 u,g + b^0 u)$ and set in the following $a^0 = b^0 = 0$. Next, we define the periodization $\tilde{u} (x) = u(\bar{x})$ whenever $\bar{x} = x+ \Z^d $ and obtain in this way $\tilde{u} \in C([0,T] ; L_{\loc}^2(\R^d))\cap L^2(0,T; H^{1}_{\loc}(\R^d))$ on the whole space. Using that 
 \[
 \langle \tilde{u} , \phi  \rangle \,=\, \biggl\langle u, \sum_{k\in \Z^d} \phi(\cdot + k) \biggr\rangle
 \]
	 for $\phi \in C_c^\infty(\R^d)$, we can also define $\tilde{f} \in H^{-1}_{\loc}(\R^d)$ for $f\in H^{-1}(\T^d)$ 
     so that the resulting periodized equation 
	\begin{equation}
				\dd\wt{u} \, -\, \wt{A} \wt{u} \, \dd t 
				=\, \wt{f}\, \dd t \,+\,\bigl( \wt{B} \wt{u} \,+ \,\wt{g}_n\bigr)\, \dd w^n 
	\end{equation}holds
	in $H^{-1}_{\loc}(\R^d)$.
	With a slight abuse of notation, we write again $u$ instead of $\wt{u}$ and similar below.
	
	{\em Step 1 (Reduction to $b^i=0$, $i\in \{1, \ldots, d\}$).}
	Introducing $\mu_t^i = \frac12  b_{n}^j(t,\cdot )\partial_j b_{n}^i(t,\cdot)$ and $\sigma_{n,t}^i= -b_n^i(t,\cdot)$, we let $\xi_t$ be the stochastic flow of $C^2$-diffeomorphisms solving \eqref{eq:SDE_2} provided by Theorem \ref{Thm:Ck}. We point out that by periodicity of the coefficient functions and pathwise uniqueness for \eqref{eq:SDE_2} we have, a.s.,
	\begin{equation}\label{eq:periodic_xi}
		\xi(x+j) \,=\, \xi(x) + j,\quad D\xi(x) \,=\, D\xi(x+j), \qquad j\in \Z^d,\,x\in \R^d, \, t\in [0,T],
	\end{equation}
    as discussed in Subsection \ref{sec:periodic_flows}.
	According to Proposition \ref{prop:ItoWentzel} the (therefore periodic) composition $v = u \circ \xi$ satisfies
	\begin{align*}
		\dd v  \,=\, &
		\bigl((A u)\circ \xi \, + \, f\circ \xi\bigr) \,\dd t  \, + \, (b_n^i\partial_i u + g_{n})\circ \xi \,\dd w^n\,+\, 
		(\mu^i  \partial_i u )\circ \xi\,   \dd t 
		\\ & -\,( b_n^i \partial_i u ) \circ \xi\,\dd w^n \,- \,\bigl((b_{n}^i\partial_i (b_n^j \partial_j u))\circ \xi \,+\, (b_{n}^i\partial_i g_{n})\circ \xi\bigr) \, \dd t\,+\,\frac12 ( b_n^i b_n^j \partial_{ij} u) \circ \xi \,\dd t
        \\=\,&
        \bigl((A u)\circ \xi \, + \, f\circ \xi\bigr) \,\dd t  \, + \, g_{n}\circ \xi \,\dd w^n\,+\, 
		(\mu^i  \partial_i u )\circ \xi\,   \dd t 
		\\ & - \,\bigl((b_{n}^i\partial_i (b_n^j \partial_j u))\circ \xi \,+\, (b_{n}^i\partial_i g_{n})\circ \xi\bigr) \, \dd t\,+\,\frac12 ( b_n^i b_n^j \partial_{ij} u) \circ \xi \,\dd t
        ,
	\end{align*}
	where the transport noise term canceled out as desired. We also remark that $\partial_j( f(\xi )) \,=\,   \partial_j \xi^i (\partial_i f)(\xi)$ and thus $(\partial_i f)(\xi) = \psi^j_i\partial_j (f(\xi )) $ where 
	\begin{equation}\label{eq:def_psi}
	\psi:= (D\xi)^{-1},
	\end{equation}
     if $f\in L^1_{\loc}(\R^d)$ is weakly differentiable. Since 
	$
	D\xi \in C_{\loc}^{\alpha/2-, (2+\alpha)-}([0,T]\times \R^d)
	$ by Theorem \ref{Thm:Ck} we have $\psi \in C_{\loc}^{\alpha/2-, (2+\alpha)-}([0,T]\times \R^{d\times d})$ a.s.\ and hence the latter extends to $f\in L^2_{\loc}(\R^d)$, in which case $(\partial_j f ) (\xi)$ is understood as the distributional composition \eqref{eq:distr_comp}. This allows us to rewrite
	\begin{align*}&
		(\mu^i \partial_iu)\circ \xi \,=\, \mu^i(\xi) \psi_i^j \partial_j v,\qquad (a^i\partial_iu)\circ \xi \,=\, a^i(\xi) \psi_i^j \partial_j v,
		\\& 
		(\partial_i (a^{ij} \partial_j u))\circ \xi \,=\, \psi_i^k \partial_k ( (a^{ij }\partial_j u)\circ \xi )\,=\, \partial_k (\psi_i^k 
		 a^{ij}(\xi) \psi_j^l \partial_l v
		) \,-\, (\partial_k \psi_i^k) a^{ij}(\xi) \psi_j^l \partial_l v,
		\\&- (b_{n}^i\partial_i (b_n^j \partial_j u))\circ \xi  \,+\,  \frac12 ( b_n^i b_n^j \partial_{ij} u) \circ \xi\,=\,-\bigl( b_n^i (\partial_i b^j_n )\partial_j u\bigr)\circ\xi  \, -\,  \frac12 ( b_n^i b_n^j \partial_{ij} u)\circ\xi 
		\\&\quad = \,
		\frac{1}{2}\bigl(
		-\,\bigl( b_n^i (\partial_i b^j_n )\partial_j u\bigr)\,+\, (\partial_i b_n^i) b^j_n\partial_j u
		\,-\,\partial_i (b_n^i b_n^j \partial_{j} u)
		\bigr)\circ\xi
		\\&\quad =\,  - \,\frac{1}{2} b_n^i(\xi) (\partial_i b^j_n )(\xi) \psi^k_j\partial_k v\,+\,\frac{1}{2}(\partial_i b_n^i) (\xi) b^j_n(\xi)\psi_j^k \partial_k v
		\\&\qquad
		-\frac{1}{2}\partial_k (\psi_i^k 
		b_n^{i}(\xi)b^j_n(\xi) \psi_j^l \partial_l v
		) \,+\,\frac{1}{2} (\partial_k \psi_i^k) b^i_n(\xi)b^j_n(\xi) \psi_j^l \partial_l v.
	\end{align*}
	We define the coefficients accordingly 
	\begin{align}\begin{split}\label{eq:defi_As}
		\alpha^{ij} \,&=\, \psi_k^i \biggl(
		a^{kl}(\xi) \,-\, \frac{1}{2}b^k_n(\xi)b_n^l (\xi)
		\biggr) \psi^{j}_l,
		\\
		\alpha^i \, &=\, \mu^j (\xi)\psi^{i}_j \,+\, a^j (\xi)\psi^{i}_j \,-\, \frac{1}{2}b_n^j(\xi)(\partial_j b_n^k)(\xi) \psi^i_k \,+\,\frac12 (\partial_j b_n^j (\xi)) b_n^k(\xi)\psi_k^i \,+\,\frac12 (\partial_k \psi_j^k) b^j_n(\xi)b^l_n(\xi) \psi_l^i,
	\end{split}\end{align}
	so that 
	\begin{equation}\label{Eq13}
	\dd v\,=\, \Bigl(\partial_i (\alpha^{ij} \partial_j v) \,+\, \alpha^i \partial_i v \,+\, f^0\circ \xi \,+\, (\partial_i f^i) \circ \xi \,-\,(b_{n}^i\partial_i g_{n})\circ \xi\Bigr) \,   \dd t \, + \,  g_{n}\circ \xi \,\dd w^n.
	\end{equation}
	To also rewrite the inhomogeneities, we observe that
	\begin{align*}
&	(\partial_i f^i ) \circ \xi \,=\, \psi_i^j \partial_j ( f^i (\xi) ) \,=\, \partial_j(\psi_i^j f^i (\xi) ) \,-\, \partial_j(\psi_i^j) f^i (\xi) ,\\
	&
	(b_{n}^i\partial_i g_{n})\circ \xi\,=\, b_n^i(\xi)\psi_i^j \partial_j(g_n(\xi))\,=\, 
	\partial_j( b_n^i(\xi)\psi_i^j g_n(\xi)) \,-\, \partial_j( b_n^i(\xi)\psi_i^j) g_n(\xi)
	\end{align*} 
	and introduce
	\begin{align}\begin{split}\label{eq:new_rhs1}
		F^0 \,&=\, f^0 (\xi) \,-\,  \partial_j(\psi_i^j) f^i (\xi)\,+\, \partial_j( b_n^i(\xi)\psi_i^j) g_n(\xi),
		\\
		F^i \,&=\, \psi_j^i f^j (\xi)\,-\, b_n^j(\xi)\psi_j^i g_n(\xi)  , \qquad i\in \{1, \ldots d\},
		\\
		G_n  \,&=\, g_n ( \xi) ,
	\end{split}
	\end{align}
	so that \eqref{Eq13} becomes
	\[
	\dd v\,=\, \Bigl(\partial_i (\alpha^{ij} \partial_j v) \,+\, \alpha^i \partial_i v \,+\, F^0 \,+\, \partial_iF^i \Bigr) \,   \dd t \, + \,  G_n  \,\dd w^n.
	\]
	\emph{Step 2 (Reduction to $G_n = 0$).}
	We let furthermore 
	$h$ be the solution to the  heat equation 
	\begin{equation}
    \label{eq:h_equation}
		\begin{cases}
			\displaystyle{\dd h\, - \,\Delta h \,\dd t\,=\,  \one_{\square} G_n \,  \dd w_t^n}, & \text{ on }\R^d,\\
			h(0)=0, & \text{ on }\R^d,
		\end{cases}
	\end{equation}
	where we denote $\square=[-1,2)^d$. We notice that, while modifying $G_n$ in a non-periodic way may seem unnatural, it is necessary to control its possibly infinite mass. Moreover, it suffices to prove properties of $v$ on an open set containing the unit cube, like $Q$, because we can transfer properties to the whole space by periodicity.
    To derive the regularity of $h$, we note that
	by \cite[Example 10.1.5]{Analysis2} and the fact that $(1-\Delta)^{1/2}: H^{1,q}(\R^d)\to L^q(\R^d)$ is an isomorphism, the operator $-\Delta: H^{1,q}(\R^d)\subseteq  H^{-1,q}(\R^d)\to H^{-1,q}(\R^d)$ has a bounded $H^{\infty}$-calculus of angle $0$. We also observe
	\begin{align}\begin{split}\label{Eq17}&
		\int_{\square} \biggl(\sum_{n\ge 1} |G_n(\xi(x))|^2\biggr)^{q/2} \, \dd x\,\eqsim_d \, 
		\int_{\T^d} \biggl(\sum_{n\ge 1} |G_n(\xi(x))|^2\biggr)^{q/2} \, \dd x
		\\& \quad = \, \int_{\T^d} \biggl(\sum_{n\ge 1} |g_n(y)|^2\biggr)^{q/2} |\det D\xi^{-1} (y) | \, \dd y\,\lesssim_{\omega,T} \, \int_{\T^d} \biggl(\sum_{n\ge 1} |g_n(y)|^2\biggr)^{q/2}  \, \dd y,
	\end{split}
	\end{align}
	independently of $t\le T$ by \eqref{eq:periodic_xi} and the continuity of $D\xi$ and hence we have
	\[
	 \|G \|_{\ell^2}\,\in \, L^p(0,T;L^q(\square)),\qquad \text{a.s.}
	\]
     Therefore,
	combining either \cite[Corollary 7.4]{NVW13} or \cite[Theorem 1.2]{MaximalLpregularity} with a standard shift argument (cf.\ \cite[Section 5]{AV19}), we obtain that  
	\begin{equation}\label{Eq19}h\,\in \,H^{\theta,p}(0,T;H^{1-2\theta,q}(\R^d))\qquad \text{a.s.,}
	\end{equation}
	for all $\theta\in [0,\frac{1}{2})$. 
	Since $p>2$ and $\frac{2}{p}+\frac{d}{q}<1$ we can choose $\eta>0$ such that 
	\begin{equation}\label{Eq14}
	\frac{1}{p}\,+\,\eta\,<\, \frac{1}{2}  \qquad \text{ and }\qquad \frac{2}{p}\,+\,\frac{d}{q}\,<\,1\,-\,2\eta,
\end{equation}
	and set $\theta=\frac{1}{p}+\eta$. The above choice and Sobolev embeddings yield
	\begin{align}\begin{split}
			\label{eq:embeddings}
		H^{\theta,p}(0,T;H^{1-2\theta,q}(\R^d))&\,\embed\, 
		C^{\eta}(0,T;H^{1-2\theta,q}(\R^d))
		 \\&\,\embed  \, C^{\eta}(0,T;C^{\eta'}(\R^d)) \,
		\embed\, 
		C^{\gamma}((0,T)\times \R^d),
		\end{split}
	\end{align}
	where $\eta':=1-2\theta-\frac{d}{q}>0$ by \eqref{Eq14} and $\gamma= \eta\wedge \eta'$ only depends on $(d,p,q)$. 
	We introduce a new process  $z = v-h$  which solves 
		\begin{equation}\label{Eq16}
	\dd z\,=\, \Bigl(\partial_i (\alpha^{ij} \partial_j z) \,+\, \alpha^i \partial_i z \,+\, \overline{F}^0 \,+\, \partial_i\overline{F}^i \Bigr) \,   \dd t 
	\end{equation}
	on $\square$
	for 
	\begin{align}\begin{split}\label{eq:new_rhs2}
		\overline{F}^0 \,&=\, F^0 \,+\, \alpha^i  \partial_i h,
	\\	\overline{F}^i \, &=\,F^i \,+\,  \alpha^{ij} \partial_j h \,-\, \partial_i h ,  \qquad i\in \{1, \ldots, d\}.
\end{split}\end{align}

	\emph{Step 3 (Application of the deterministic De Giorgi--Nash--Moser theorem).}  As we have transformed the original SPDE \eqref{eq:SPDE_lin} for $u$ into the random PDE \eqref{Eq16} for $z$, we aim to apply the deterministic result \cite[Theorem III.10.1]{LSU} to $z$ on the set $\square$. To this end, we will provide stopping times $\tau_m$ and a sequence $M_m,\nu_m \in (0,\infty)$ such that a.s.
	\begin{enumerate}[(i)]
		\item \label{Item1} $z(0) \in C^{\gamma_0}(\square)$,
		\item \label{Item2} $z\in L^2(0,T; H^1(\square))\cap C([0,T]; L^2(\square))\cap L^\infty([0,T]\times \square)$,
		\item \label{Item3} $\nu_m | {\eta}|^2 \le \alpha^{ij}(t,x) {\eta }_i  {\eta}_j \le M_m | {\eta }|^2 $ for all $  {\eta}\in \R^d$, $t\in [0,\tau_m]$ and $ x\in \Tor^d$,
		\item \label{Item100} We have $
		\overline{F}^0\in L^{p/2}(0,T;L^{q/2}(\square))$ and   $\alpha^i ,\overline{F}^i \in L^p(0,T;L^q(\square))$ for $i>0$.
	\end{enumerate}
	Then an application of \cite[Theorems 10.1, Chapter III]{LSU} will imply 
	\begin{equation}\label{Eq29}
	z\,\in \, C^{(\beta_m\wedge \gamma_0 )/2, \beta_m \wedge\gamma_0 }( (0,\tau_m)\times{\square}_* )
	\end{equation}
	for ${\beta}_m =  \beta(d,M_m,\nu_m, p,q)$ and $\square_*= [-1/2, 3/2)^d$. We remark that passing from $Q$ to the smaller cube $Q_*$ is necessary since we do not have any control on the H\"older norm of $z$ on the boundary $[0,T]\times\partial Q_*$. At the same time, since $Q_*$ contains the unit cube, it is still large enough to extrapolate properties of the periodic function $v$ to the whole space. In any case, part \eqref{Item1} of  the above assumptions
	follows from $z(0) = v(0) = u(0)$. That $z\in L^\infty([0,T]\times \square)$ is a consequence of Proposition \ref{prop:bddness} together with \eqref{Eq19} and \eqref{eq:embeddings}. Using now 
	\begin{align*}
	&	\int_{\square} | \nabla v|^2 \,\dd x\,\eqsim_d \,
			\int_{\T^d} | D \xi \nabla u(\xi)|^2 \,\dd x
			\\&\quad  =\, \int_{\T^d} |D\xi^{-1}(y) \nabla u(y) |^2 |\det D\xi^{-1}| \,\dd y \,\lesssim_{\omega,T} \, \| \nabla u\|_{L^2(\T^d; \R^d)}^2
	\end{align*}
uniformly in $t\le T$ as follows from \eqref{eq:periodic_xi}
combined with an analogous estimate to \eqref{Eq17} we deduce that $v\in L^2(0,T; H^1(\square))$. That $v$ is continuous in $L^2(\square)$ can be obtained based on \eqref{Eq17} analogously to the third step from the proof of Proposition \ref{prop:ItoWentzel} and therefore \eqref{Item2} holds too. To make sure that \eqref{Item3} is satisfied as well, we set
\begin{equation}\label{eq:tau_m_definition}
\tau_m \,=\, \inf\Bigl\{ t\in [0,T] \,\Big|\, \sup_{x\in  [0,1]^d}|\psi(t,x)| \,+\, |D\xi (t,x)| \ge m    \Bigr\},
\end{equation}
 $\nu_m = \nu/m^2$ and $M_m= m^2M$. Then \eqref{eq:defi_As} implies that \eqref{Item3} is indeed fulfilled. Lastly, we observe that ${F}^i$ as defined in \eqref{eq:new_rhs1} satisfies the same integrability as $f^i$ required in \eqref{eq:assumption_rhs} by estimates similar to \eqref{Eq17} and the ($\omega$-wise) boundedness of all additional prefactors. Also the terms entering in \eqref{eq:new_rhs2} respect the condition \eqref{eq:assumption_rhs} by \eqref{Eq19} for $\theta=0$ so that \eqref{Item100} holds too.
	
	\emph{Step 4 (Conclusion).}
	Combining \eqref{Eq19}, \eqref{eq:embeddings} and \eqref{Eq29} we find  
	$$
	v \in C^{({\overline{\g}}_m\wedge \gamma_0)/2,{\overline{\gamma}}_m\wedge \gamma_0 }( (0,\tau_m)\times{\square}_*  ) 
	$$ 
	for $\overline{\gamma}_m =  \gamma\wedge \beta_m $ and by periodicity $v \in C^{({\overline{\g}}_m\wedge \gamma_0)/2,{\overline{\g}}_m\wedge \gamma_0  }( (0,\tau_m)\times \R^d) $. Invoking Proposition \ref{Prop_Psi}, we  have also $\xi^{-1} \in C_{\loc}^{1/2-,1}([0,T]\times \R^d)$ so that $u(t,x) = v(t, \xi^{-1}(t,x)) $ lies a.s.\ in 
    $C^{({\overline{\g}}_m\wedge \gamma_0)/2-,{\overline{\g}}_m\wedge \gamma_0  }( (0,\tau_m)\times \T^d) $.
    Upon defining $\g_m:={(\overline{\g}_m\wedge\g_0)}/{3}$,  the desired \eqref{eq:qual_statement} follows.
    \end{proof}
    
     \begin{proof}[Proof of Theorem \ref{t:DeGiorgiNashMoser_quantitative}]
     We divide the proof into several steps. 

\emph{Part I (Reductions).} As in the previous proof, by an extension by zero on $[\tau,T]$ of the inhomogeneities $f$ and $g$, we assume that $\tau=T$. Next, we argue that the proof of Theorem \ref{t:DeGiorgiNashMoser_quantitative} can be reduced to the proof of a more tractable estimate.
     
     \emph{Step I-A. To prove Theorem \ref{t:DeGiorgiNashMoser_quantitative} it suffices to show the existence of sequences $(\delta_k)_k,(\psi_k)_k\subset(0,\infty)$ depending only on $(d,T,M,\nu,\gamma_0,p,q,R_b)$ such that $\lim_{k\to \infty}\psi_k=\infty$, and for all solutions $u$ to \eqref{eq:SPDE_lin} with $a^0=0$ and $b^0=0$ it holds that}
	\begin{equation}
	\label{eq:claim_1_reduction0_proof_quantitative}
	\P\big(\|u\|_{C^{\delta_k}((0,T)\times \T^d)}> k\big) \,\leq\, 
	 \psi_k^{-1}\,+\,5\,\P(\data_{u_0,f,g}>\psi_k),\quad \text{ for all } k\,\ge\, 1.
	\end{equation}

	Assume that \eqref{eq:claim_1_reduction0_proof_quantitative} holds in the case $ a^0=0$ and $b^0=0$ and recall the reduction to this situation by replacing $(f^0,g)$ by $(f^0+a^0u, g + bu)$ employed at the beginning of the proof of Theorem \ref{t:DeGiorgiNashMoser}. 
	To estimate the resulting contribution of $u$, we provide a bound on the tail probability of $\|u\|_{L^\infty([0,T]\times \T^d)}$, which follows by Lenglart's domination \cite{Lenglart} from Proposition \ref{prop:bddness}. 
	For the reader's convenience, we include the details here.
	For  $\ell>0$, let $\lambda$ be the stopping time given by
	$$
	\lambda:=\inf\{t\in [0,T]\,:\, \data_{u_0,f,g}(t)\geq \ell\}\quad \text{ with }\quad \inf\emptyset:=T,
	$$ 
	where $\data_{u_0,f,g}(t)$ is as in the statement of Theorem \ref{t:DeGiorgiNashMoser_quantitative} with $\tau$ replaced by $t$. 
	Let $u^{(\lambda)}$ be the solution to \eqref{eq:SPDE_lin} with $(f,g)$ replaced by $(\one_{[0,\lambda]}f,\one_{[0,\lambda]}g)$. By Proposition \ref{prop:Existence_Uniqueness}, it holds that $u^{(\lambda)}=u$ on $[0,\lambda]$.
	Thus,
	\begin{equation}
	\label{Eq54}
	\begin{aligned}
	\P(\|u\|_{L^\infty([0,T]\times \T^d)}\geq \eta )
	&\leq \P(\|u\|_{L^\infty([0,T]\times \T^d)}\geq \eta,\, \lambda= T)+ \P(\lambda<T)\\ 
	&\leq \P(\|u^{(\lambda)}\|_{L^\infty([0,T]\times \T^d)}\geq \eta)+ \P(\data_{u_0,f,g}\geq \ell)\\
	&\leq \frac{C}{\eta}\E\|u^{(\lambda)}\|_{L^{\infty}([0,T]\times \T^d)}+ \P(\data_{u_0,f,g}\geq \ell)\\
	&\leq \frac{C\ell}{\eta}+ \P(\data_{u_0,f,g}\geq \ell).
	\end{aligned}
	\end{equation}
	where in the last estimate we used  Proposition \ref{prop:bddness} and the definition of $\lambda$.
	We emphasize that working with $u^{(\lambda)}$ instead of $u$ allows us to show immediately that the constant in the above estimates is independent of the stopping time, as long as $\lambda \le T$.
	Choosing, e.g., $\ell=\sqrt{\eta}$ in the above, we arrive at
	\begin{equation}
	\label{eq:boundedness_u_estimate}
	\P(\|u\|_{L^\infty((0,T)\times \T^d)}\geq {\eta} )\leq  C/\sqrt{\eta}+ \P(\data_{u_0,f,g}\geq \sqrt{\eta}).
	\end{equation}
	Hence, if \eqref{eq:claim_1_reduction0_proof_quantitative} holds with $a^0=0$ and $b^0=0$, then
     	\begin{align*}
	&\P\big(\|u\|_{C^{\delta_k}((0,T)\times \T^d)}> k\big) \\
	&\,\leq\, 
	 \psi_k^{-1}\,+\,5\,\P(\data_{u_0,f+a^0 u,g+b^0 u}>\psi_k)\\
	 &\,\leq\, 
	 \psi_k^{-1}\,+\,5\,\P(\data_{u_0,f,g}>\psi_k/2)\,+\,5\,\P(\|a^0u\|_{L^{p/2}(0,T;L^{q/2}(\T^d))}+\|b^0u\|_{L^{p/2}(0,T;L^{q/2}(\T^d;\ell^2))}>\psi_k/2)\\
	 &\,\leq\, 
	 \psi_k^{-1}\,+\,5\,\P(\data_{u_0,f,g}>\psi_k/2)\,+\,5\,\P(\|u\|_{L^{\infty}([0,\tau]\times \T^d)}>\psi_k/(2C_{0}M)),
	\end{align*}
	where $C_{0}$ depends only on $p$ and $T$, and we used the boundedness of $a^0$ and $b^0$ as in Assumption \ref{ass:DeGiorgi_Nash_Moser}. Now the claim in Step 1 follows from \eqref{eq:boundedness_u_estimate}, up to relabelling the sequence $(\psi_k)_{k}$.
	
	In the rest of the proof, we assume $a^0=0$ and $b^0=0$ without further mentioning it.
	
   \emph{Step I-B. To prove the claim in \eqref{eq:claim_1_reduction0_proof_quantitative} it is enough to show the existence of a constant $C>0$ and sequences $(\g_m)_m,(\chi_m)_m$ of strictly positive numbers depending only on $(d,T,M,\nu,\gamma_0,p,q,R_b)$, such that }
 \begin{equation}
    \label{eq:claim_estimated_quantitative}
\P\bigl(\|u\|_{C^{\g_m}((0,T)\times \T^d)}> \chi_m\bigr)
\leq C /{m}\,+\,5\, \P(\data_{u_0,f,g}>{m}),\qquad m\ge 1.
 \end{equation}
 
Without loss of generality, we can assume that $\chi_m$ increases to $\infty$ and takes values in the integers. Thus, $\psi_k:=\sup \{m\,:\, \chi_m\leq k\}$, with the convention that $\sup\emptyset:=1$, is increasing and satisfies $\lim_{k\to \infty}\psi_k=\infty$.
In particular, $\chi_{\psi_k}\leq k$ for all $k\geq 1$ and the above implies  \eqref{eq:claim_1_reduction0_proof_quantitative} with $\delta_k=\gamma_{\psi_k}$, since then 
$$
\P\bigl(\|u\|_{C^{\delta_k}((0,T)\times \T^d)}> k\bigr)
\leq 
\P\bigl(\|u\|_{C^{\gamma_{\psi_k}}((0,T)\times \T^d)}> \chi_{\psi_k}\bigr),
$$
 and $C/\psi_k \leq \psi_k^{-1/2}$ for sufficiently large $k$, upon replacing $\psi_k$ by its square root.

\emph{Part II (Proof of \eqref{eq:claim_estimated_quantitative}).}
It remains to prove \eqref{eq:claim_estimated_quantitative}, for which we collect some facts for fixed $m\geq 1$.
With the same notation from the proof of Theorem \ref{t:DeGiorgiNashMoser}, we
    define new stopping times 
    \[
\wt{\tau}_m\,=\, \inf\Bigl\{ t\in [0,\tau_m ] \,\Big|\, \| \xi^{-1}\|_{C^{3/8,3/4}((0,t)\times {Q} ;\R^d)} {
	+ \sup_{x\in [0,1]^d}|D^2 \xi (t,x) |}  \, \ge\, m    \Bigr\} ,
\]
   where $\inf\emptyset:=\tau_m$ and $\tau_m$ is as in \eqref{eq:tau_m_definition}. We remark that we choose the exponents $(3/8,3/4)$ for concreteness and that one could equally well take any pair $(\kappa,2\kappa)$ for $\kappa\in[1/3,1/2)$. Then, exploiting that $u(t,x) = v(t, \xi^{-1}_t(x)) $ and periodicity we 
    observe 
    \begin{align}\begin{split}
    \label{eq:inequality_C_m_definition_quantitative_estimate}
    \| u \|_{C^{\gamma_m} ((0,\wt{\tau}_m)\times \T^d)} \,&\leq C_m\, \|v\|_{C^{(\overline{\g}_m\wedge \gamma_0)/2,  \overline{\g}_m\wedge \gamma_0}((0,\wt{\tau}_m)\times \R^d)}\\
    \,&=C_m\, 
    \|v\|_{C^{(\overline{\g}_m\wedge \gamma_0)/2,  \overline{\g}_m\wedge \gamma_0}((0,\wt{\tau}_m)\times {Q}_*)} 
    \\&
      \leq \,C_m\big( \|z\|_{C^{(\beta_m\wedge \gamma_0 )/2, \beta_m \wedge\gamma_0 }( (0,\wt{\tau}_m)\times{Q}_* )}\,+\, \|h\|_{C^\gamma ((0,\wt{\tau}_m)\times {Q}_*)}\big)
    \end{split}\end{align}
    for $\gamma_m  = (\overline{\g}_m\wedge \gamma_0) /3$,   $C_m\in [1,\infty)$ depending only on $m$, and $\beta_m$ as in \eqref{Eq29}. 
     In the above, we used the notation introduced in Steps 2 and 3 in the proof of Theorem \ref{t:DeGiorgiNashMoser}.
    
    Additionally, recalling that $\psi D\xi = \id_{\R^d}$ by \eqref{eq:def_psi}, and based on the observation  \[
    (\partial_i \psi ) D \xi \,=\,  \partial_i ( \psi D\xi ) - \psi \partial_i D\xi \qquad\Rightarrow \qquad 
        (\partial_i \psi ) \,=\,- \psi (\partial_i D\xi )\psi,
    \]
    we can also estimate 
    \begin{equation}\label{Eq52}
    \|\partial_i \psi^i_j\|_{L^\infty([0,\wt{\tau}_m]\times Q)} \,\le\, C_m,\qquad i,j=1,\dots, d,
    \end{equation}
    up to enlarging $C_m$ if necessary.
    We proceed by bounding the two terms on the right-hand side of \eqref{eq:inequality_C_m_definition_quantitative_estimate} individually.

    \emph{Step II-A (a-priori estimate on $h$). There exists $\gamma>0$ depending only on $(d,p,q)$ for which the following assertion holds. There exist a sequence $(N_m)_m$ depending only on $(d,p,q)$ and a constant $C$ depending only on $(d,T,M,\nu,p,q)$ such that, for all $m,\eta\geq 1$:}
		\begin{align}\label{Eq60}
	\P\bigl( \|h\|_{C^{\gamma}( (0,\wt{\tau}_m)\times{\square } )}> \eta\bigr) &\le  N_m \eta^{-p} + \P( \data_{u_0,f,g} > m),
	\\ \label{Eq61}
	\P\bigl( \|h\|_{L^p(0,\wt{\tau}_m;H^{1,q}(Q))}> \eta\bigr) &\le  N_m \eta^{-p} + \P( \data_{u_0,f,g} >  m ).
	\end{align}

    Before going into the proof of the above, we emphasize that the choice of the $p$-th power in the claimed estimates is not essential for our purposes, and other choices can be made. To prove the claimed estimate, we argue as in Step 1. 
    For each stopping time $\lambda\leq \wt{\tau}_m$, let us denote by $h^{(\lambda)}$ the solution to \eqref{eq:h_equation} with $G$ replaced by the process $\one_{[0,\lambda]}G$. Note that $h=h^{(\lambda)}$ on $[0,\lambda]\times \O$, where $h$ is the solution to \eqref{eq:h_equation}. Hence, from the stochastic maximal $L^p$-regularity estimates as used in \eqref{Eq19}-\eqref{eq:embeddings} and the bound \eqref{Eq17},  it follows
    \begin{align}\begin{aligned}\label{Eq62}
\E\Bigl[\|h\|_{C^{\gamma}((0,\lambda)\times \R^d)}^p + \|h\|_{L^p(0,\lambda; H^{1,q}(\R^d))}^p\Bigr]
&\leq
\E\Bigl[\|h^{(\lambda)}\|_{C^{\gamma}((0,T)\times \R^d)}^p + \|h^{(\lambda)}\|_{L^p(0,T; H^{1,q}(\R^d))}^p\Bigr]
\\
&\leq C \E\|\one_{[0,\lambda]}G\|_{L^p(0,T;L^q(Q;\ell^2))}^p\\
&\leq C_m \E\|g \|_{L^p(0,\lambda;L^q(\T^d;\ell^2))}^p ,
\end{aligned}
    \end{align}
    where $C_m$ depends only on $(d,p,q)$ and $m\geq 1$, and we used that $\lambda\leq \wt{\tau}_m$.
    As in Step 1, the use of the stopping time $\lambda$ allows us to immediately obtain the independence of the constant in the above estimate on the stopping time $\lambda$ as long as $\lambda \le \wt{\tau}_m$.
    
 For each $m\geq 1$, we employ the above estimate with
 $$
\lambda:= \inf\{t\in [0,\wt{\tau}_m]\,:\, \|g\|_{L^p(0,\lambda;L^q(\T^d;\ell^2))}\geq  m\},\quad \text{ with }\quad \inf\emptyset:=\wt{\tau}_m,
 $$
 resulting in the bound 
\begin{align}\begin{split}\label{Eq51}
   \P(\|h\|_{C^\gamma((0,T)\times \T^d)}\geq \eta)&\,\leq\,
   \P(\|h\|_{C^\gamma((0,T)\times \T^d)}\geq \eta, \, \lambda=\wt{\tau}_m)+ \P(\lambda <\wt{\tau}_m)\\
   &\,\leq\,
   \P(\|h\|_{C^\gamma((0,\lambda)\times \T^d)}\geq \eta)+ \P(\|g\|_{L^p(0,\wt{\tau}_m;L^q(\T^d;\ell^2))} \geq  m)\\
   &\,\leq\,\frac{ C_m \, m^p}{\eta^p}+\P(\|g\|_{L^p(0,\tau;L^q(\T^d;\ell^2))} \geq  m)
   \\
   &\,\leq\, \frac{C_m\, m^p}{\eta^{p}} +\P(\data_{u_0,f,g}\geq  m).
   \end{split}
\end{align}
for $m,\eta\ge 1$. Hence, \eqref{Eq60} follows by setting $N_m= C_m m^p$. The above argument and \eqref{Eq62} imply analogously \eqref{Eq61}. 

	\emph{Step II-B (A-priori estimate on $z$).
		There exist a constant $C$ depending only on $(d,T,M,\nu,p,q)$ and a family of positive constants $(C_{m,\eta}^*)_{m,\eta}$
		depending on ${(d,T,M,\nu ,p,q,R_b)}$ such that 
	\begin{align}\label{Eq159}
	\P \Bigl(
	\|z\|_{C^{(\beta_m\wedge \gamma_0 )/2, \beta_m \wedge\gamma_0 }( (0,\wt{\tau}_m)\times {Q}_* )} > C_{m,\eta}^*
	\Bigr)
	 \le 2N_m \eta^{-p}+ C/\sqrt{\eta} +
	2 \P(\data_{u_0,f,g} > \sqrt{\eta}) +2 \P(\data_{u_0,f,g} > {m}),
\end{align}for all $m,\eta\ge 1$,
	where $N_m$ is from the previous step.
}

    Recall that $\square=[-1,2)^d$. 
    We start with the observation that
    \[
    \| z\|_{L^\infty([0,\wt{\tau}_m]\times Q) } \le
    \| u\|_{L^\infty([0,\wt{\tau}_m]\times \T^d) } + 
    \| h\|_{L^\infty([0,\wt{\tau}_m]\times Q) } ,
    \]
  and notice that the terms on the right-hand side are estimated by \eqref{Eq54} and \eqref{Eq60}. 
    With our aim to apply \cite[Theorem III.10.1]{LSU}, we also observe that regarding the data for the equation \eqref{Eq16} satisfied by $z$, as defined in  \eqref{eq:defi_As}, \eqref{eq:new_rhs1} and \eqref{eq:new_rhs2}, we have
    \begin{align*}
    	\|\alpha^i\|_{L^\infty([0,\wt{\tau}_m ] \times Q)} &\le C_{m,M,R_b},\\
    	\|\overline{F}^0 \|_{L^{p/2}(0,\wt{\tau}_m;L^{q/2}(Q))}&\le 
    	C_{m,M,R_b} \biggl(
    	\|f^0\|_{L^{p/2}(0,\wt{\tau}_m;L^{q/2}(\T^d)) } +\sum_{i=1}^d \|f^i\|_{L^{p/2}(0,\wt{\tau}_m;L^{q/2}(\T^d)) } \\&  \qquad\qquad + \|g\|_{L^{p/2}(0,\wt{\tau}_m;L^{q/2}(\T^d;\ell^2)) } 
    	\biggr)
    	+
    	C_M\| h\|_{L^{p/2}(0,\wt{\tau}_m;H^{1,q/2}(Q))},
    	\\
    	\|\overline{F}^i \|_{L^{p}(0,\tau;L^{q}(Q))}&\le C_{m,M,R_b} \biggl(
    	\| f^i\|_{{L^{p}(0,\wt{\tau}_m;L^{q}(\T^d))}} + \| g \|_{{L^{p}(0,\wt{\tau}_m;L^{q}(\T^d;\ell^2))}}
    	\biggr)
    \\&\qquad 	+ C_M\| h\|_{L^{p}(0,\wt{\tau}_m;H^{1,q}(Q))}
    	,\qquad i\in \{1,\dots, d\}
    	,
    \end{align*}
    where we involved in particular \eqref{Eq52}. 
    Thus, applying  \cite[Theorem III.10.1]{LSU} we find 
    \begin{align}
    \|z\|_{C^{(\beta_m\wedge \gamma_0 )/2, \beta_m \wedge\gamma_0 }( (0,\wt{\tau}_m)\times{Q}_* )} \,\le\, C_{m,\eta}^*
\end{align}
where $\beta_m $ depends on $(d,m,M,\nu,p,q)$ and $C^*_{m,\eta}$ on $(d,T,M,\nu ,p,q,R_b,m,\eta)$, on the event   
    \begin{align*} 
    \biggl\{\max\Bigl\{   \| u\|_{L^\infty([0,\wt{\tau}_m]\times Q) },  \| h\|_{L^\infty([0,\wt{\tau}_m]\times Q) } ,	\data_{u_0,f,g} ,\| h\|_{L^{p}(0,\wt{\tau}_m;H^{1,q}(Q))} \Bigr\}
    	\le \eta\biggr\},\qquad \eta\ge 1.
    \end{align*}    
	Involving now \eqref{eq:boundedness_u_estimate}, \eqref{Eq60} and \eqref{Eq61} 
	we find that
	\begin{align*}
	&
		\P \Bigl(
		\|z\|_{C^{(\beta_m\wedge \gamma_0 )/2, \beta_m \wedge\gamma_0 }( (0,\wt{\tau}_m)\times {Q}_* )} > C_{m,\eta}^*
		\Bigr)\\
		 &\quad\le \P\Bigl( \max\Bigl\{  \| u\|_{L^\infty([0,T]\times Q) }, \| h\|_{L^\infty([0,\wt{\tau}_m]\times Q) } ,	\data_{u_0,f,g} ,\| h\|_{L^{p}(0,\wt{\tau}_m;H^{1,q}(Q))} \Bigr\}> \eta\Bigr)\\
		&\quad \le 2N_m \eta^{-p}\,+\,C/\sqrt{\eta}\,+\, 
		2\, \P(\data_{u_0,f,g} > \sqrt{\eta}) \,+\, 2\, \P(\data_{u_0,f,g} > {m}),
	\end{align*}
	where  $N_m$ and $C$ are from the previous step and \eqref{eq:boundedness_u_estimate}, respectively.

	\emph{Step II-C (Conclusion).}  To show \eqref{eq:claim_estimated_quantitative}, we fix $m\ge 1$ and let $\chi>0$ be determined  later.  We can estimate
	\begin{align*}
		&\P\bigl(\|u\|_{C^{\g_m}((0,T)\times \T^d)}> \chi\bigr) \\
	&\quad \leq \,
	\P\bigl(\|u\|_{C^{\g_m}((0,T)\times \T^d)}> \chi,\, \wt{\tau}_{m}=  T\bigr)\, +\,  \P(\wt{\tau}_{m}<T)\\
	&\quad  \leq \,
	\P\bigl(\|u\|_{C^{\g_m}((0,\wt{\tau}_m)\times \T^d)}> \chi\bigr)\, +\,  \P(\wt{\tau}_{m}<T)\\
		&\quad  \leq \,
	\P\bigl( \|z\|_{C^{(\beta_m\wedge \gamma_0 )/2, \beta_m \wedge\gamma_0 }( (0,\wt{\tau}_m)\times{Q}_* )} > \chi/(2C_m)\bigr)\, +\,\P\bigl(\|h\|_{C^\gamma ((0,\wt{\tau}_m)\times {Q}_*)}> \chi/(2C_m)\bigr)\,+\,   \P(\wt{\tau}_{m}<T),
	\end{align*}
	based on  \eqref{eq:inequality_C_m_definition_quantitative_estimate}. While we bound the first two quantities on the right-hand side in the previous steps, we observe regarding the last term that
	\begin{align*}&
		\P(\wt{\tau}_{m}<T)
		\\&\quad \leq\, \P\big(\|\psi\|_{L^\infty([0,T]\times Q; \R^{d\times d})} \,+\, \|D\xi \|_{L^\infty([0,T]; C^1( Q; \R^{d\times d}))}\,+\,\|\xi^{-1}\|_{C^{3/8,3/4}((0,T)\times Q;\R^d))} > m\big)\\
		\,&\quad \leq\,m^{-1}\big(\E\|\psi\|_{L^\infty([0,T]\times Q;\R^{d\times d})} \,
		+\, \E\|D\xi \|_{L^\infty([0,T]; C^1( Q; \R^{d\times d}))}\,+\,\E\|\xi^{-1}\|_{C^{3/8,3/4}((0,T)\times Q;\R^d)}\big)
		\,\leq\,C m^{-1},
	\end{align*}
	due to Propositions \ref{prop:periodic_flows}--\ref{cor:quant_inverse} with $C$ depending only on $(T,R_b)$. Together with \eqref{Eq60} and \eqref{Eq159} we conclude that for any $\eta \ge 1$, as soon as 
	\begin{equation}\label{Eq53}
		\chi \ge 4 C_m C_{m,\eta}^*,
	\end{equation}
	we have the estimate
		\begin{align*}
		\P\bigl(\|u\|_{C^{\g_m}((0,T)\times \T^d)}> \chi\bigr)  \,
		\leq \,&Cm^{-1} \,+\,(2C_m)^p N_m \chi^{-p}+2N_m\eta^{-p}\, +\, C/\sqrt{\eta} \\&+\,2\,\P(\data_{u_0,f,g} > \sqrt{\eta} ) \,+\,3\,\P(\data_{u_0,f,g} > m ),
		\end{align*}
	up to enlarging $C$.
	Obtaining \eqref{eq:claim_estimated_quantitative} amounts now for $m\ge 1$ to choose first $\eta_m = \max\{ m^2, (N_mm)^{1/p}\}$ and then $\chi_m = \max\{ C_m(N_m m)^{1/p} , 4C_m C_{m,\eta_m}^*\}$.
\end{proof}

\section{Flow method and uniform H\"older exponent}\label{sec:unif_constants}
In the De Giorgi–Nash–Moser estimates of Theorems \ref{t:DeGiorgiNashMoser} and \ref{t:DeGiorgiNashMoser_quantitative}, we establish that solutions to \eqref{eq:SPDE_lin} are almost surely Hölder continuous. However, the associated Hölder exponent may become arbitrarily small on a set of small probability. In this section, we investigate conditions under which the flow-based approach can yield a uniform H\"older exponent instead. To this end, we start by observing that the possibility that $\overline{\g}_m\to 0$ in Theorem \ref{t:DeGiorgiNashMoser} stems from the presence of the flow via $\psi = (D\xi)^{-1}$ in the diffusion coefficient $\alpha^{ij}$, as defined in \eqref{eq:defi_As}, and the resulting loss of control over the 
\emph{ellipticity ratio} of the transformed equation \eqref{Eq16}. 
\begin{remark}[Ellipticity ratio]\label{rem:ell_ratio}
    The   H\"older regularity of the solution $u$ to the parabolic PDE
    $\partial_t u = \nabla \cdot (a \nabla u )$ guaranteed by the deterministic De Giorgi--Nash--Moser estimates depends on $a$ satisfying $  \nu |y|^2\le y^\top a(t,x)y \le M |y|^2$   or all $y\in \R^d$ only through the ellipticity ratio $M/\nu$.
This is because
the rescaled $u_r(t,x) = u(rt,x)$  solves the equation  $\partial_t u_r = \nabla \cdot ( (ra) \nabla u_r )$, and while the upper and lower bound on $r a$ change individually, their ratio remains the same.  At the same time, we have $u\in C^\gamma $ iff $ u_r\in C^\gamma$ for each $r,\gamma>0 $ so that one can use the convention to apply \cite[Theorem III.10.1]{LSU} to $u_{\nu}$, which has lower bound $1$ and upper bound $M/\nu$. We remark that while
    this qualitative statement also applies in the presence of lower-order terms and a right-hand side, any quantitative bound is affected by the rescaling of time. 
    \end{remark}

Before stating sufficient conditions to achieve a uniform Hölder exponent in Theorem \ref{t:DeGiorgiNashMoser}, we present an explicit example of a flow for which the ellipticity ratio of the transformed equation \eqref{Eq16} becomes arbitrarily large with positive probability. 

\begin{proposition}[Exploding ellipticity ratio]
\label{prop:blow_up_elliptic_ratio}
We consider in $d=2$ the SPDE 
\begin{align}\begin{split}\label{eq:SPDE_example}
    \dd u\,=\, \bigl(\partial_1 (a^{11} \partial_1 u) \,+\, \partial_2 (a^{22} \partial_2 u) \bigr)\,\dd t \,&-\, \sin(2\pi x_1)\partial_{1} u \,\dd w_t^1 \,-\, 
    \cos(2\pi x_1)\partial_1 u \,\dd w_t^2\\&-\, \sin(2\pi x_2)\partial_2 u \,\dd w^3_t \,-\, \cos(2\pi x_2)\partial_2 u\, \dd w^4_t,
\end{split}\end{align}
where $a^{11},a^{22} \colon[0,T]\times\O\times \Tor^d\to \R$ are $\Progress\otimes\Borel(\Tor^d)$-measurable such that for some $\delta \in (0,1)$, $\P$-a.s.,
\begin{equation}
    \label{Eq_assumption_prior_a_example}
    \frac{1}{2} +\delta  \,\le\, a^{ii}(t,x)  \,\le\, \frac{1}{2} + \delta^{-1},
\end{equation}
for all $i\in \{1,2\}$, $t\in [0,T]$ and $x\in \R^d$.
Then, defining for  the 
diffusion coefficient $\alpha^{ij}$ of the transformed equation \eqref{Eq16}  the  quantity
\begin{equation}
	\label{Eq59}
\Lambda(t,x):=
{\sup_{\eta\in \R^2 : |\eta| = 1 } |\eta_i \alpha^{ij}(t,x) \eta_j | }  \bigg/ {   \inf_{\eta\in \R^2 : |\eta| = 1 } |\eta_i \alpha^{ij}(t,x) \eta_j | },
\end{equation}
we have 
$
\P(\Lambda(t,x) >k)>0
$
for all  $(t,x)\in (0,T] \times\T^d$ and $k>0$.
\end{proposition}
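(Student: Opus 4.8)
The plan is to compute the stochastic flow $\xi$ associated to the noise coefficients in \eqref{eq:SPDE_example} explicitly — or at least explicitly enough to control the inverse Jacobian $\psi = (D\xi)^{-1}$ — and then exhibit, with positive probability, a spatial region where the flow stretches disproportionately in one coordinate direction, forcing the ellipticity ratio $\Lambda$ to be large. The crucial structural feature of \eqref{eq:SPDE_example} is that the noise decouples across the two coordinates: the $x_1$-component is driven only by $w^1,w^2$ through vector fields $-\sin(2\pi x_1)e_1, -\cos(2\pi x_1)e_1$ (both pointing in the $e_1$-direction and depending only on $x_1$), and symmetrically for $x_2$. Consequently the Stratonovich SDE \eqref{eq:eqn_xi_intro} splits as $\xi_t(x) = (\xi^{(1)}_t(x_1), \xi^{(2)}_t(x_2))$, where $\xi^{(1)}_t$ solves a one-dimensional Stratonovich SDE on $\T^1$ driven by $w^1, w^2$, and $D\xi_t$ is diagonal with entries $\partial_1\xi^{(1)}_t(x_1)$ and $\partial_2\xi^{(2)}_t(x_2)$. (One must also account for the Itô-to-Stratonovich correction and the drift $\mu^i = \tfrac12 b_n^j\partial_j b_n^i$, which by the same decoupling only involves $x_1$-functions in the first slot; this does not destroy the product structure.)

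Next I would insert this diagonal structure into formula \eqref{eq:defi_As} for $\alpha^{ij}$. Since $\psi = (D\xi)^{-1} = \mathrm{diag}(1/\partial_1\xi^{(1)}, 1/\partial_2\xi^{(2)})$ and the matrix $a(t,\cdot) - \tfrac12 \sum_n b_n\otimes b_n$ is itself diagonal here — indeed $\sum_n b_n\otimes b_n = \mathrm{diag}(\sin^2(2\pi x_1)+\cos^2(2\pi x_1), \sin^2(2\pi x_2)+\cos^2(2\pi x_2)) = \mathrm{diag}(1,1)$, which is exactly why the lower bound \eqref{Eq_assumption_prior_a_example} of the form $\tfrac12 + \delta$ is imposed — the transformed diffusion matrix is diagonal with entries
\[
\alpha^{11}(t,x) = \frac{a^{11}(t,\xi_t(x)) - \tfrac12}{(\partial_1\xi^{(1)}_t(x_1))^2}, \qquad
\alpha^{22}(t,x) = \frac{a^{22}(t,\xi_t(x)) - \tfrac12}{(\partial_2\xi^{(2)}_t(x_2))^2}.
\]
For a diagonal positive matrix the quantity $\Lambda$ in \eqref{Eq59} is simply the ratio $\max\{\alpha^{11},\alpha^{22}\}/\min\{\alpha^{11},\alpha^{22}\}$. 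Using the two-sided bound \eqref{Eq_assumption_prior_a_example} (so the numerators lie in $[\delta, \delta^{-1} - \tfrac12]$), it follows that
\[
\Lambda(t,x) \;\geq\; \delta^2 \,\frac{(\partial_1\xi^{(1)}_t(x_1))^2}{(\partial_2\xi^{(2)}_t(x_2))^2}
\quad\text{or}\quad
\Lambda(t,x) \;\geq\; \delta^2 \,\frac{(\partial_2\xi^{(2)}_t(x_2))^2}{(\partial_1\xi^{(1)}_t(x_1))^2},
\]
whichever is $\geq 1$. Hence it suffices to show that, for any fixed $(t,x)$ with $t>0$, the ratio $\partial_1\xi^{(1)}_t(x_1)/\partial_2\xi^{(2)}_t(x_2)$ can be arbitrarily large (or arbitrarily small) on an event of positive probability. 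Since $\xi^{(1)}$ and $\xi^{(2)}$ are driven by disjoint sets of Brownian motions, the pairs $(\partial_1\xi^{(1)}_t(x_1))$ and $(\partial_2\xi^{(2)}_t(x_2))$ are \emph{independent}, so it is enough to show that a single one-dimensional derivative process $\partial_1\xi^{(1)}_t(x_1)$ has full support in $(0,\infty)$ — i.e. $\mathbb{P}(\partial_1\xi^{(1)}_t(x_1) > R) > 0$ and $\mathbb{P}(\partial_1\xi^{(1)}_t(x_1) < r) > 0$ for every $R$ large and $r$ small — and then combine the two independent events.

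The support statement for the one-dimensional derivative flow is where the real work lies, and I expect it to be the main obstacle. The derivative $\zeta_t := \partial_1\xi^{(1)}_t(x_1)$ satisfies a linear Stratonovich SDE obtained by differentiating \eqref{eq:eqn_xi_intro}, of the form $d\zeta_t = \zeta_t\,[\,-2\pi\cos(2\pi\xi^{(1)}_t) \circ dw^1_t + 2\pi\sin(2\pi\xi^{(1)}_t)\circ dw^2_t + (\text{drift})\,dt\,]$, whose solution is an exponential $\zeta_t = \exp(\int_0^t \cdots)$; thus I need the exponent, a stochastic integral functional of the coupled pair $(\xi^{(1)},\zeta)$, to take arbitrarily large positive and negative values with positive probability. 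I would establish this via the Stroock–Varadhan support theorem for (Stratonovich) SDEs: the support of the law of $(\xi^{(1)}_\cdot,\zeta_\cdot)$ in path space is the closure of the set of solutions obtained by replacing $(w^1,w^2)$ with smooth controls $(h^1,h^2)$. Choosing, for instance, a control that drives $\xi^{(1)}$ to sit near a point where $\cos(2\pi\cdot)$ has a definite sign and then pumps $h^1$ in one direction over $[0,t]$, the corresponding deterministic exponent can be made as large positive (or as large negative) as desired; the support theorem then transfers this to positive probability for the SDE. Care is needed to handle the unbounded linear coefficient in the $\zeta$-equation and to verify the hypotheses of the support theorem in the precise form available (e.g. as in Ikeda–Watanabe), but the geometry of the vector fields — two vector fields $\cos, \sin$ on $\T^1$ that together span the tangent space everywhere — makes the controllability transparent. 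Assembling these pieces: pick $R$ with $\delta^2 R^2 > k$; by the support statement $\mathbb{P}(\partial_1\xi^{(1)}_t(x_1) > R) =: p_1 > 0$ and, applied to the independent second factor, $\mathbb{P}(\partial_2\xi^{(2)}_t(x_2) < 1) =: p_2 > 0$; on the intersection, which has probability $p_1 p_2 > 0$ by independence, $\Lambda(t,x) \geq \delta^2 R^2 > k$, completing the proof.
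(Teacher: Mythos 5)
Your overall architecture matches the paper's: the flow decouples into independent one-dimensional pieces $\xi^{(1)}(x_1),\xi^{(2)}(x_2)$ so that $D\xi$ is diagonal, the Stratonovich drift cancels, $\alpha$ is diagonal with entries $(a^{ii}(\cdot,\xi)-\tfrac12)/(\partial_i\xi^{(i)})^2$, the ellipticity ratio is bounded below by $\delta^2$ times the larger of $(\partial_1\xi^{(1)}/\partial_2\xi^{(2)})^2$ and its reciprocal, and the two Jacobian entries are independent because they are driven by disjoint Brownian motions. (Small slip: $a^{ii}-\tfrac12$ lies in $[\delta,\delta^{-1}]$, not $[\delta,\delta^{-1}-\tfrac12]$, but the ratio bound $\delta^2$ you extract is still right.)

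Where you diverge is in establishing that the Jacobian entry $\partial_1\xi^{(1)}_t(x_1)$ puts positive mass near $0$ and near $\infty$. You propose the Stroock--Varadhan support theorem applied to the coupled system $(\xi^{(1)},\zeta)$ with a controllability argument. That route can be made to work (after passing to $\log\zeta$ to bound the coefficients, and controlling $\xi^{(1)}$ to sit at a zero of $\sin(2\pi\cdot)$ while pumping the other control), but it imports a heavier theorem and leaves you with only a qualitative positive-mass statement. The paper instead exploits the very special trigonometric structure of the noise: writing the SDE for $\partial_1\xi^{(1)}_t$ in It\^o form, the driver
$W^1_t(x)=\int_0^t\cos(2\pi\xi^{(1)}_s)\,\dd w^1_s-\int_0^t\sin(2\pi\xi^{(1)}_s)\,\dd w^2_s$
has quadratic variation $\langle W^1\rangle_t=\int_0^t(\cos^2+\sin^2)\,\dd s=t$, so by L\'evy's characterization it is a genuine Brownian motion, and $\partial_1\xi^{(1)}_t(x_1)=\exp(2\pi W^1_t(x)-2\pi^2 t)$ is exactly a geometric Brownian motion. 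The ratio $\partial_1\xi^{(1)}/\partial_2\xi^{(2)}$ is therefore a log-normal random variable built from two independent Brownian increments, and the tail bound is immediate and explicit. The paper's approach is thus not only more elementary (no support theorem needed) but also quantitative, which is the very reason this example serves its purpose in Section 4 of illustrating unbounded ellipticity ratios. If you want to complete your version, you should either carry out the Stroock--Varadhan verification carefully (taking logarithms in the $\zeta$-equation to get bounded coefficients) or, better, notice the L\'evy-characterization shortcut, which is clearly what the authors designed the trigonometric noise to enable.
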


The above result shows that the presence of sequences $\overline{\g}_m,\delta_k \searrow 0$  in  Theorems \ref{t:DeGiorgiNashMoser}--\ref{t:DeGiorgiNashMoser_quantitative} cannot be improved using our approach. 
We note that, while this highlights limitations of the flow method, we do not give an upper bound on the H\"older regularity of $u$ solving \eqref{eq:SPDE_example}, for which one usually needs to work with a specific solution, cf.\ \cite[Example 1, pp.\ 68--70]{book_homgen}. 

 An analysis of the proof of Theorem \ref{t:DeGiorgiNashMoser} yields conversely the following sufficient conditions for the ellipticity ratio of \eqref{Eq16} to be uniformly bounded.
In its statement we denote by 
$$[X,\tilde{X} ] = (X\cdot \nabla ) \tilde{X} -( \tilde{X}\cdot \nabla  ) X$$ 
the \emph{Lie bracket} of two sufficiently regular vector fields $X,\tilde{X}\colon \T^d\to\R^d$.

\begin{proposition}[Sufficient conditions for uniform H\"older exponent]\label{prop:sufficient_uniform_gamma}
Let the assumptions of Theorem \ref{t:DeGiorgiNashMoser} be satisfied.
Then the  sequence $(\overline{\g}_m)_{m}$ in Theorem \ref{t:DeGiorgiNashMoser} can be taken constant whenever the stochastic flow of diffeomorphisms $\xi$ satisfies
\begin{equation}\label{eq:cond_number}\biggl(
\sup_{t\in [0,T],x\in \T^d} | (D\xi_t(x))^{-1}|\biggr)\biggl/ \biggl( \inf_{t\in [0,T],x\in \T^d} \frac{1}{ |D\xi_t(x)| } \biggr)  \,\le \, C, 
\end{equation}
almost surely for some deterministic constant $C<\infty$. The latter applies in particular in the following cases.
\begin{enumerate}[{\rm(1)}]
   \item \label{Cond_uniform_3}
   {\rm(Constant noise coefficients)} For each $n\in \N$,  $b_n(t,\omega, x)$ depends only on $(t,\omega)$.
    \item  {\rm (Commuting flows)} \label{Cond_uniform_2} For some $N\in \N$ we have $b_n = 0$ for all $n>N$, while for  $k\ne n\le N$ it holds:
    \begin{itemize}
        \item  The commutator  
$[b_k,b_n]$ vanishes.
    \item Each $b_n(t,x,\omega) = b_n(x)$  depends only on space.
        \item For $\phi_t^n(x)$ being the solution to  $
         \dot{\phi}_t^n(x) \,=\, -b_n(\phi_t^n(x))$ starting from $ \phi_0^n(x)\,=\, x$,
        we have 
        \begin{equation}\label{Eq55}
        \biggl(
        \sup_{t\in [0,\infty),x\in \T^d} | (D\phi_t^n (x))^{-1}|\biggr)\biggl/ \biggl( \inf_{t\in [0,\infty),x\in \T^d} \frac{1}{ |D\phi_{t}^n(x)| } \biggr)  \,<\,\infty.
        \end{equation}
    \end{itemize}
\end{enumerate}
\end{proposition}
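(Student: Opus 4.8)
The plan is to trace through the proof of Theorem \ref{t:DeGiorgiNashMoser} and identify precisely where the sequence $(\overline{\g}_m)_m$ could fail to be bounded away from zero, then show that condition \eqref{eq:cond_number} removes this failure. Recall from Step 3 of that proof that $\overline{\g}_m = \gamma \wedge \beta_m$, where $\gamma$ depends only on $(d,p,q)$, and $\beta_m = \beta(d, M_m, \nu_m, p, q)$ is the De Giorgi--Nash--Moser exponent for the transformed equation \eqref{Eq16} with ellipticity constants $\nu_m = \nu/m^2$ and $M_m = m^2 M$. By Remark \ref{rem:ell_ratio}, $\beta_m$ in fact depends on $M_m$ and $\nu_m$ only through the ratio $M_m/\nu_m$; the $m$-dependence of this ratio is what forces us to introduce stopping times $\tau_m$ and to allow $\overline{\g}_m \to 0$. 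Thus, the whole point is to show that under \eqref{eq:cond_number}, the ellipticity ratio of $\alpha^{ij}$ as defined in \eqref{eq:defi_As} is bounded by a deterministic constant, uniformly in $(t,x)$ and $\omega$. Once this is established, we may take $\nu_m \equiv \nu'$ and $M_m \equiv M'$ deterministic in Step 3, set $\tau_m = T$ for all $m$, and conclude $\overline{\g}_m \equiv \gamma \wedge \beta(d, M'/\nu', p, q)$, which is the desired constant sequence.

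First I would establish the ellipticity ratio bound. From \eqref{eq:defi_As} we have $\alpha = \psi^\top \big(a(\xi) - \tfrac12 \sum_n b_n(\xi)\otimes b_n(\xi)\big)\psi$ with $\psi = (D\xi)^{-1}$. The parabolicity assumption \eqref{eq:ellip} gives $\nu|\eta|^2 \le \eta^\top(a - \tfrac12\sum_n b_n\otimes b_n)\eta \le M|\eta|^2$ pointwise. Writing $\eta^\top\alpha(t,x)\eta = (\psi\eta)^\top\big(a - \tfrac12\sum_n b_n\otimes b_n\big)(\xi_t(x))(\psi\eta)$, we obtain $\nu|\psi\eta|^2 \le \eta^\top\alpha\eta \le M|\psi\eta|^2$. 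Since $|\psi\eta| \ge |D\xi|^{-1}|\eta|$ (because $|\eta| = |D\xi\,\psi\eta| \le |D\xi||\psi\eta|$) and $|\psi\eta| \le |\psi||\eta| = |(D\xi)^{-1}||\eta|$, the ellipticity ratio of $\alpha$ at $(t,x)$ is bounded by
\[
\frac{M}{\nu}\cdot\frac{\sup_{|\eta|=1}|\psi\eta|^2}{\inf_{|\eta|=1}|\psi\eta|^2} \le \frac{M}{\nu}\cdot |(D\xi_t(x))^{-1}|^2\,|D\xi_t(x)|^2,
\]
which by \eqref{eq:cond_number} is bounded by $C^2 M/\nu$, a deterministic constant. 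Feeding this into Step 3 of the proof of Theorem \ref{t:DeGiorgiNashMoser} (using also that the lower-order coefficients $\alpha^i$ and the inhomogeneities remain controlled in the relevant $L^p(L^q)$-norms, which follows as before from the pathwise boundedness of the flow and its derivatives, now automatic since \eqref{eq:cond_number} controls $D\xi$ and $(D\xi)^{-1}$ uniformly) yields the constant exponent.

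It remains to verify that cases \eqref{Cond_uniform_3} and \eqref{Cond_uniform_2} imply \eqref{eq:cond_number}. For \eqref{Cond_uniform_3}, if each $b_n(t,\omega,x) = b_n(t,\omega)$ is spatially constant, then the SDE \eqref{eq:SDE_2} (with $\mu^i = \tfrac12 b_n^j\partial_j b_n^i = 0$ and $\sigma_n^i = -b_n^i$) has solution $\xi_t(x) = x - \sum_n\int_0^t b_{n,s}\,\dd w_s^n$, a pure translation; hence $D\xi_t(x) = \mathrm{Id}$, and \eqref{eq:cond_number} holds trivially with $C = 1$. For \eqref{Cond_uniform_2}, the key is that when the vector fields $b_1,\dots,b_N$ pairwise commute, $[b_k,b_n] = 0$, the associated Stratonovich flow factorizes as a composition of the deterministic flows $\phi^n$ evaluated at Brownian times: $\xi_t = \phi^1_{w^1_t}\circ\cdots\circ\phi^N_{w^N_t}$ (this is the standard fact that commuting vector fields generate commuting flows, so the Stratonovich SDE can be solved by the "splitting" formula; one checks it satisfies \eqref{eq:eqn_xi_intro} with $\mu=0$ using that Stratonovich calculus obeys the ordinary chain rule and the commutativity kills the cross terms). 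Then $D\xi_t(x)$ is a product of the matrices $D\phi^n_{w^n_t}(\cdot)$ evaluated at the appropriate points, so both $|D\xi_t(x)|$ and $|(D\xi_t(x))^{-1}|$ are bounded by products of the corresponding sup-norms of $D\phi^n$ and $(D\phi^n)^{-1}$; condition \eqref{Eq55} for each $n$ then gives \eqref{eq:cond_number}.

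The main obstacle I anticipate is the rigorous justification of the factorization $\xi_t = \phi^1_{w^1_t}\circ\cdots\circ\phi^N_{w^N_t}$ in case \eqref{Cond_uniform_2}: one must confirm that this composition is adapted and continuous, that it genuinely solves the Stratonovich system \eqref{eq:eqn_xi_intro}, and that the regularity hypothesis \eqref{eq:additional_assumption_intro} (i.e. $b_n \in C^{3+\delta}$) is enough for all the chain-rule manipulations and for the flows $\phi^n$ to be $C^2$-diffeomorphisms with the bound \eqref{Eq55} making sense. The commutativity computation showing the cross-variation terms vanish is the conceptual heart; everything else (the ellipticity-ratio estimate, the reduction to a constant exponent inside the proof of Theorem \ref{t:DeGiorgiNashMoser}) is bookkeeping built directly on results already in the paper.
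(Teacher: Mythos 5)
Your proposal takes essentially the same route as the paper: isolate the ellipticity ratio of the transformed leading coefficient $\alpha^{ij}$ as the only quantity feeding into the H\"older exponent (via Remark \ref{rem:ell_ratio}), bound it by $C^2 M/\nu$ exactly as the paper does via $\nu|\psi\eta|^2\le \eta^\top\alpha\eta\le M|\psi\eta|^2$ together with $|D\xi|^{-1}|\eta|\le|\psi\eta|\le|\psi||\eta|$, and then verify \eqref{eq:cond_number} in case \eqref{Cond_uniform_3} via $D\xi_t=\id$ and in case \eqref{Cond_uniform_2} via the factorization $\xi_t=\phi^1_{w^1_t}\circ\cdots\circ\phi^N_{w^N_t}$ and the chain rule. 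The paper justifies the factorization by explicit It\^o calculus (computing $\tfrac{\dd^2}{\dd t_n^2}\Phi=(b_n\cdot\nabla b_n)(\Phi)$ and applying It\^o's formula to $\Phi_{w^1_t,\ldots,w^N_t}$) rather than by your looser Stratonovich-chain-rule heuristic; the substance is identical, but the paper's version is the rigorous justification you flag as the ``main obstacle.''

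One inaccuracy worth correcting: you assert that under \eqref{eq:cond_number} one may ``set $\tau_m=T$ for all $m$'' because pathwise boundedness of ``the flow and its derivatives'' is ``now automatic.'' This is false: the lower-order coefficient $\alpha^i$ in \eqref{eq:defi_As} contains $\partial_k\psi^k_j$, hence $D^2\xi$, which is not controlled by \eqref{eq:cond_number}. The stopping times $\tau_m$ are therefore still needed, and the paper does not claim to dispense with them. What saves your conclusion is precisely Remark \ref{rem:ell_ratio}: the De~Giorgi--Nash--Moser exponent depends only on the ellipticity ratio of the leading coefficient, even in the presence of lower-order terms, so $\beta_m$ (and hence $\overline{\gamma}_m$) can be taken independent of $m$ without requiring $\tau_m=T$. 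The paper's proof invokes this remark directly and avoids the incorrect intermediate claim.
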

\begin{remark}[Commuting flows]\begin{enumerate}[(i)]\item We comment on the role of the assumptions of \eqref{Cond_uniform_2}: That $[b_k,b_n] =0 $ and $b_n = b_n(x)$ 
implies that the individual deterministic flows $\phi^k$ and $\phi^n$ commute with each other, allowing for an explicit description of the stochastic flow of diffeomorphisms $\xi$ solving the Stratonovich equation $\dd \xi =- \sum_{n=1}^N b_n(\xi) \circ \dd w^n $, cf.\ \cite[Example III.3.5]{Kunita} and the proof of Proposition \ref{prop:sufficient_uniform_gamma} below. The requirement \eqref{Eq55} is therefore directly linked to \eqref{eq:cond_number} and is, e.g., satisfied for time periodic  $\phi^n_t$. 

\item 
Noise coefficients for which \eqref{Cond_uniform_2} is satisfied are, for example, any
$b_n(x) =  \widetilde{b}_n(x) e_n$, $n\le N=d$ for positive functions $\wt{b}_n \colon \T \to \R $ bounded away from zero. Indeed, then the commutation relation is trivially satisfied while each $\phi^n$ has a finite (in time) period. 

\item 
Lastly, due to the explicit form of the stochastic flow used in the proof of Proposition \ref{prop:sufficient_uniform_gamma}, one sees that the $C^2$-regularity of $\xi$ in space, which is the key in the proofs of Theorems \ref{t:DeGiorgiNashMoser}--\ref{t:DeGiorgiNashMoser_quantitative}, holds as soon as $\phi^n$ has $C^2$-regularity in space for each $n\le N$.  For the latter, it suffices to assume that $b_n \in C^2(\T^d ; \R^d)$, cf.\ \cite[Theorem V.4.1]{Hartman_book}, so that one can assume less regularity than in the general setting considered in Section \ref{app:DeGiorgi_Nash_Moser} as well as Remark \ref{remark_improved_condition}.
\end{enumerate}
\end{remark}

\subsection{Proofs of Propositions \ref{prop:blow_up_elliptic_ratio} and \ref{prop:sufficient_uniform_gamma}}\label{ss:proofs_uniform_gamma}
This subsection is devoted to the proofs of the special cases of the flow method stated above.
\begin{proof}[Proof of Proposition \ref{prop:blow_up_elliptic_ratio}]
\label{ex:explosion_elliptic_ratio}
The equation \eqref{eq:SPDE_example} corresponds to the choice of coefficients 
\begin{align*}
    b^1_1(x)\,&=\, -\sin(2\pi x_1) ,\quad b^2_1(x)\,=\, 0,\\
    b^1_2(x)\,&=\, -\cos(2\pi x_1) ,\quad b^2_2(x)\,=\,0,\\
    b^1_3(x)\,=\, 0 ,\quad b^2_3(x)\,&=\,  -\sin(2\pi x_2), \\
    b^1_4(x)\,=\, 0 ,\quad b^2_4(x)\,&=\, -\cos(2\pi x_2), 
\end{align*}
with $a^{12} =a^{21} =0$
in  \eqref{eq:SPDE_lin}, in which case \begin{equation}\label{eq32}
\bigl(a^{ij}(t,x)-\frac{1}{2} b^i_n(x) b^j_n(x)\bigr)_{i,j} = \delta_{ij} (a^{ij}(t,x)-1/2),
\end{equation}
so that all assumptions of Theorem \ref{t:DeGiorgiNashMoser} are satisfied.
The corresponding stochastic flow of diffeomorphisms $\xi_t(x)=(\xi^1_t(x), \xi^{2}_t(x))$ is  in the proof of Theorem \ref{t:DeGiorgiNashMoser} defined as the solution to
\begin{align*}&
    \dd\xi_t^1(x) \,=\, \sin(2\pi \xi^1_t(x)) \,\dd w^1_t \,+\,  \cos(2\pi \xi^1_t(x)) \,\dd w^2_t,\qquad \xi_0^1(x)\,=\, x_1 ,
    \\
    &
    \dd\xi_t^2(x) \,=\, \sin(2\pi \xi^2_t(x)) \,\dd w^3_t \,+\,  \cos(2\pi \xi^2_t(x)) \,\dd w^4_t,\qquad \xi_0^2(x)\,=\, x_2 ,
\end{align*}
where the drift term precisely vanishes. Differentiating the above equation with respect to $x$ yields that
\begin{align}\begin{split}\label{Eq18}&
    \dd (\partial_1\xi^1_t(x))\,=\, 2\pi \cos(2\pi \xi^1_t(x))  (\partial_1\xi^1_t(x) ) \,\dd w_t^1 \,-\, 2\pi \sin(2\pi \xi^1_t(x))  (\partial_1\xi^1_t(x) )\, \dd w_t^2, \\&
    \dd (\partial_2\xi^2_t(x))\,=\, 2\pi \cos(2\pi \xi^2_t(x))  (\partial_2\xi^2_t(x) )\, \dd w_t^3 \,-\, 2\pi \sin(2\pi \xi^2_t(x))  (\partial_2\xi^2_t(x) ) \,\dd w_t^4 ,
    \end{split}
\end{align}
both starting from 1, 
and $\partial_{2}\xi_t^1 =  \partial_{1}\xi_t^2=0 $. By Levy's characterization theorem, the emerging drivers
\begin{align*}&
    W^1_t(x) \,=\, \int_0^t \cos(2\pi \xi^1_s(x))  \,\dd w^1_s\,-\, 
    \int_0^t \sin(2\pi \xi^1_s(x))  \,\dd w^2_s,\\&
    W^2_t(x) \,=\, \int_0^t \cos(2\pi \xi^2_s(x))  \,\dd w^3_s\,-\, 
    \int_0^t \sin(2\pi \xi^2_s(x))  \,\dd w^4_s,
\end{align*}
are for each $x\in \T^d$ a pair of independent Brownian motions. 
Then \eqref{Eq18} reads 
\begin{align*}
    \dd (\partial_1\xi^1_t(x))\,=\, 2\pi (\partial_1\xi^1_t(x)) \,\dd W^1_t(x),\qquad \dd (\partial_2\xi^2_t(x))\,=\, 2\pi (\partial_2\xi^2_t(x)) \,\dd W^2_t(x),
\end{align*}
and the latter equations are  solved by the geometric Brownian motions
\[
\partial_1\xi^1_t(x)\,=\, \exp\bigl({2\pi W^1_t(x) - 2\pi^2 t}\bigr),\qquad \partial_2\xi^2_t(x)\,=\, \exp\bigl({2\pi W^2_t(x) - 2\pi^2 t}\bigr).
\]
Consequently, we can explicitly 
 evaluate the transformed diffusion matrix \eqref{eq:defi_As} using also \eqref{eq32} as
\[
\alpha(t,x) \,=\, \left(\begin{array}{cc}
  (a^{11}(t,\xi_t(x)) -1/2)  \exp\bigl({4\pi^2 t-4\pi W^1_t(x)}\bigr)   &  0 \\
   0  & (a^{22}(t,\xi_t(x)) -1/2) \exp\bigl({4\pi^2 t- 4\pi W^2_t(x)}\bigr)
\end{array}\right).
\]
and we bound the quantity \eqref{Eq59} from below by
\begin{align*}&
\Lambda(t,x)\,\ge\, \delta^2 \exp\Bigl(
4\pi \Bigl(
\max\{ W^1_t(x) , W^2_t(x)\} \,-\, \min\{ W^1_t(x) , W^2_t(x)\} 
\Bigr)
\Bigr),
\end{align*}
where $\delta>0$ is the constant from \eqref{Eq_assumption_prior_a_example}. 
Due to the independence of  $(W^1_t(x), W^2_t(x))$, the claim follows. 
\end{proof}
\begin{proof}[Proof of Proposition \ref{prop:sufficient_uniform_gamma}]
We start by proving the general assertion that it suffices to check \eqref{eq:cond_number} to ensure that one can take  $\overline{\gamma}_m = \g $ to be constant in the statement of Theorem \ref{t:DeGiorgiNashMoser}.  
    Following Remark \ref{rem:ell_ratio}, it suffices for the latter to check that the ellipticity ratio of the transformed diffusion matrix 
    \[
    \psi_k^i \biggl(
		a^{kl}(\xi) \,-\, \frac{1}{2}b^k_n(\xi)b_n^l (\xi)
		\biggr) \psi^{j}_l\,=\, \bigg[(D\xi)^{-1, \top} \biggl(a(\xi) - \frac{1}{2}(b(\xi) , b(\xi))_{\ell^2 }  \biggr) D\xi^{-1} \bigg]_{i,j},
    \]
    see \eqref{eq:defi_As},
    stays uniformly bounded. To this end, we estimate
    \begin{align*}&
{\sup_{t\in [0,T],x\in \T^d} \sup_{\eta\in \R^d : |\eta| = 1 } \biggl|\eta^\top (D\xi_t(x) )^{-1,\top} \biggl(a(\xi_t(x)) - \frac{1}{2}(b(\xi_t(x) ) , b(\xi_t(x) ))_{\ell^2}  \biggr) (D\xi_t(x))^{-1} \eta\biggr| } 
\\&\quad =\, {\sup_{t\in [0,T],x\in \T^d} \sup_{\tilde{\eta}\in \R^d : |\tilde{\eta}| 
\le |( D\xi_t(x))^{-1}| } \biggl|\tilde{\eta}^\top \biggl(a(\xi_t(x)) - \frac{1}{2}(b(\xi_t(x)) , b(\xi_t(x)))_{\ell^2}  \biggr) \tilde{\eta} \biggr| } \,\\&\quad \le\, M \sup_{t\in [0,T],x\in \T^d} |( D\xi_t(x))^{-1}|^2 ,
    \end{align*}
    and  
    \begin{align*}&
        \inf_{t\in [0,T],x\in \T^d} \inf_{\eta\in \R^d : |\eta| = 1 } \biggl|\eta^\top (D\xi_t(x) )^{-1, \top} \biggl(a(\xi_t(x)) - \frac{1}{2}(b(\xi_t(x) ) , b(\xi_t(x) ))_{\ell^2 }  \biggr) (D\xi_t(x))^{-1} \eta \biggr| 
        \\&\quad \ge \, 
        \inf_{t\in [0,T],x\in \T^d} \inf_{\tilde{\eta}\in \R^d : |\tilde{\eta}| \ge  1/|D\xi_t(x)| } \biggl|\tilde{\eta}^\top \biggl(a(\xi_t(x)) - \frac{1}{2}(b(\xi_t(x)) , b(\xi_t(x)))_{\ell^2 }  \biggr) \tilde{\eta} \biggr|
        \\&\quad  \ge\,  \nu \inf_{t\in [0,T],x\in \T^d} { |D\xi_t(x)|^{-2} },
    \end{align*}
    where $M,\nu$ are the constants appearing in \eqref{eq:ellip}. Thus, whenever \eqref{eq:cond_number} holds, the ellipticity ratio of $\alpha$ remains uniformly in $(t,\omega)$ bounded as desired. 

    Secondly, we check that the above condition \eqref{eq:cond_number} is indeed satisfied if \eqref{Cond_uniform_3} or \eqref{Cond_uniform_2}   holds.  In the former situation \eqref{Cond_uniform_3}, we can  represent the flow explicitly as 
    \[
    \xi_t(x) \,=\, x \,-\, \sum_{n\ge 1} \int_0^t  b_{n,s} \,\dd w^n_s .
    \]
    Because the stochastic integrals are independent of $x$,  the flow is therefore a random, time-dependent, shift of the initial value. Since then $D\xi_t = \id_{\R^d}$, the condition \eqref{eq:cond_number} is satisfied.
    
    Concerning the situation \eqref{Cond_uniform_2}, we observe that due to the commutation assumption, we have 
    \[
    \Phi_{t_1,\dots, t_N}(x) \, = \,\phi_{t_1}^1 \circ \dots \circ \phi_{t_N}^N(x) \,=\, \phi_{t_{l_1}}^{l_1} \circ \dots \circ \phi_{t_{l_N}}^{l_N}(x) ,
    \]
    for any permutation $(l_1,\dots , l_N)$ of $(1,\dots, N)$ and $t_1,\dots t_N \ge 0$. The latter allows us to efficiently compute derivatives of the above with respect to $t_n$, for any $n\le N$. Indeed, choosing a permutation with $l_1=n$, we find 
    \[
     \frac{\dd }{\dd t_{l_1} } \Phi_{t_1,\dots, t_N}\,=\, \biggl(\frac{\dd }{\dd t_{l_1} } \phi_{t_{l_1}}^{l_1} \biggr) \circ \dots \circ \phi_{t_{l_N}}^{l_N} \,=\, -b_{l_1}\biggl( \phi_{t_{l_1}}^{l_1} \circ \dots \circ \phi_{t_{l_N}}^{l_N}  \biggr)\,=\, -b_{l_1}( \Phi_{t_1,\dots, t_N}).
    \]
    A repetition of this argument shows that, moreover
    \[
    \frac{\dd^2 }{\dd t_{l_1}^2 } \Phi_{t_1,\dots, t_N} \,=\, (b_{l_1}\cdot \nabla b_{l_1}) ( \Phi_{t_1,\dots, t_N}),
    \]
     and an application of It\^o's formula yields therefore
    \[
    \dd \Phi_{w_t^1,\dots, w_t^N} \,=\, \frac{1}{2}\sum_{n=1}^N (b_n\cdot \nabla b_n) (\Phi_{w_t^1,\dots, w_t^N}) \,\dd t \,-\, \sum_{n=1}^N b_n (\Phi_{w_t^1,\dots, w_t^N})\,\dd w^n.
    \]
    Consequently, $\Phi_{w_1,\dots, w_N}(x)$ coincides with the stochastic flow $ \xi_t(x)$ defined in the proof of Theorem \ref{t:DeGiorgiNashMoser}. Differentiation with respect to $x$ yields then
    \[
    D \xi_t(x)\,=\, D \Phi_{w^1_t,\dots, w_t^N}(x) \,=\, D \phi_{w_t^1}^1\Bigl(\phi_{w_t^2}^2 \circ \dots \circ \phi_{w_t^N}^N(x)\Bigr)  \dots D \phi_{w_t^N}^N(x),
    \]
    so that the general condition \eqref{eq:cond_number} follows from our assumption \eqref{Eq55} on the individual $\phi^n$ for $n\le N$.
\end{proof}

\section{Global smooth solutions to quasilinear SPDEs}
\label{s:application_to_quasi}
In this last section, we apply our main result, Theorem \ref{t:DeGiorgiNashMoser}, to establish the existence of global, regular solutions to quasilinear SPDEs of the form
\begin{equation}
	\label{eq:SPDE_quasi}
	\begin{cases}
		\displaystyle{
			\dd U \, - \, \nabla \cdot (A(t,\cdot,U)\cdot \nabla U) \,\dd t \, = \, \textstyle{\sum}_{n\ge 1} (B_n(t,\cdot)\cdot \nabla) U \,\dd w^n_t,} &\ \ \ \text{ on }  [0,\infty) \times  \Tor^d,\\
		U(0)\,=\,U_0,&\ \ \  \text{ on }\Tor^d.
	\end{cases}
\end{equation}
 On the variable diffusion coefficient $A$, the transport noise coefficients $B_n$ and the initial value $U_0$, we impose the following, for which we recall that the space $\Lip$ refers to the \emph{space of  Lipschitz functions}.

\begin{assumption}[Coefficients]
\label{ass:quasilinear} There exist $C<\infty$, $\delta\in (0,1)$ and $ \nu >0  $ such that the  following holds:
   \begin{enumerate}[(i)]
    \item \label{I1} $B=(B_n)_{n\geq 1}:\R_+\times \O\times \T^d\to \ell^2(\R^d)$ is $\Progress\otimes \Borel(\T^d)$-measurable and a.s. for all $t\in [0,\infty)$
    $$\|B(t,\cdot)\|_{C^{3+\delta}(\T^d;\ell^2(\R^d))}\le C  .
    $$
    \item  \label{I2}$A:\R_+\times \O\times \T^d\times \R\to \R^{d\times d}$ is $\Progress\otimes \Borel(\T^d\times \R)$-measurable, and for all $N\geq 1$ there exists $C_N>0$ such that, a.s.\ for all $t\in [0,\infty)$ and $x\in \T^d$,
\begin{align*}
\|A(t,x,\cdot )\|_{{\Lip(-N,N)}}\le C_N.
\end{align*}
    \item A.s. for all $t\in[0,\infty)$, $x\in \T^d$, $y\in \R$ and $\eta\in \R^d$, we have
    $$ A(t,x,y)\eta\cdot\eta- \frac{1}{2}\Big(\sum_{n\geq 1} | B_n(t,x) \cdot \eta|^2\Big)
    \geq \nu |\eta|^2.
    $$
\end{enumerate}
\end{assumption}

\begin{assumption}[Initial data]
\label{ass:quasilinear_data}
Let $p,q\in (2,\infty)$ and $\kappa\in [0,\frac{p}{2}-1)$ be such that $2\frac{1+\kappa}{p}+\frac{d}{q}<1$ and let $U_0\in L^0_{\F_0}(\O;B^{1-2\frac{1+\kappa}{p}}_{q,p}(\T^d))$.
\end{assumption}  
Inspecting the above condition on the initial value reveals that one can equivalently demand that
\begin{equation}\label{Eq:equivalent_assumption}
	\text{$U_0\colon\Omega \to C^{\gamma_0}(\T^d)$ is strongly $\F_0$-measurable for some $\g_0 >0 $,}
\end{equation} 
 as we require in Theorem \ref{t:global_quasilinear_intro}.
Indeed if Assumption \ref{ass:quasilinear_data} is fulfilled, it suffices to take $\gamma_0 \le 1- 2\frac{1+\kappa}{p} -\frac{d}{q}$ and use the Sobolev embedding theorem to obtain \eqref{Eq:equivalent_assumption}. Conversely, if we have \eqref{Eq:equivalent_assumption},
we can first choose $p\in (2,\infty)$ and $\kappa\in [0,\frac{p}{2}-1)$ such that $0< 1-2\frac{1+\kappa}{p} \le \gamma_0$ and subsequently $q$ sufficiently large.
Here, we chose the formulation from Assumption \ref{ass:quasilinear_data} as it accommodates the $L^p(w_\kappa;L^q)$-setting of stochastic PDEs developed by the first and third named authors in \cite{AV19_QSEE1,AV19_QSEE2}.
Indeed, following \cite{ALV21}, $B^{1-2\frac{1+\kappa}{p}}_{q,p}(\T^d)$ is the  optimal trace space when studying parabolic SPDEs in time-weighted spaces, where we recall that $w_{\kappa}(t):=|t|^\kappa$ for $\kappa>0$ and
$$
\textstyle
\|f\|_{L^p(0,T,w_{\kappa};\mathscr{X})}^p:=\int_{0}^T \|f(t)\|_{\mathscr{X}}^p w_{\kappa}(t) \,\dd t, \qquad  \text{for }p\in [1,\infty) 
$$
and a Banach space $\mathscr{X}$. For more details, we also refer to the recent survey \cite{AV25_survey}. 

Returning to the study of the quasilinear SPDE \eqref{eq:SPDE_quasi}, we define solutions as follows. 

\begin{definition}[Solutions to \eqref{eq:SPDE_quasi}]\label{defi:sol}
Let Assumptions \ref{ass:quasilinear} and \ref{ass:quasilinear_data} be satisfied,
 $\sigma$ be a stopping time and 
$
U:(0,\sigma)\times \O\to W^{1,q}(\T^d)
$
be progressively measurable. 
\begin{enumerate}[{\rm(1)}]
\item We say that $(U,\sigma)$ is a \emph{local $(p,\kappa,q)$-solution} to \eqref{eq:SPDE_quasi} if there exists a sequence of stopping times $(\sigma_j)_{j\geq 1}$ for which $\sigma_j \to\sigma$ a.s. and for all $j\ge 1$
\begin{align}
\label{eq:regularity_U_local}
U\in L^p(0,\sigma_j,w_{\kappa};W^{1,q}(\T^d))\cap C([0,\sigma_j];B^{1-2\frac{1+\kappa}{p}}_{q,p}(\T^d)), \qquad \text{a.s.,}
\end{align}
and
\begin{align}
\label{eq:integrated_quasi}
U(t) - U_0 =\int_0^t \nabla \cdot (A(\cdot,U)\cdot \nabla U)\,\dd s 
+\sum_{n\geq 1} \int_0^t (B_n \cdot\nabla) U\,\dd w^n_s,\qquad t\in [0,\sigma_j].
\end{align}
\item We call a local $(p,\kappa,q)$-solution $(U,\sigma)$ to \eqref{eq:SPDE_quasi} \emph{unique} if for any other local $(p,\kappa,q)$-solution $(V,\tau)$ to \eqref{eq:SPDE_quasi} it holds a.s.\ $U=V$ on $[0,\sigma\wedge \tau )$.
\item We call a local $(p,\kappa,q)$-solution $(U,\sigma)$ to \eqref{eq:SPDE_quasi} \emph{maximal} if for any other local $(p,\kappa,q)$-solution $(V,\tau)$ to \eqref{eq:SPDE_quasi} it holds $\tau\leq \sigma$ a.s.\ and $U=V$ on $[0,\sigma\wedge \tau )$.
\item We call a local $(p,\kappa,q)$-solution $(U,\sigma)$ to \eqref{eq:SPDE_quasi} \emph{global} if  a.s.\ $\sigma=\infty$. In this case, we simply write $U$ instead of $(U,\sigma)$.
\end{enumerate}
\end{definition}

Note that maximal solutions are, in particular, unique. Moreover, a unique global $(p,\kappa,q)$-solution $(U,\sigma)$ is a fortiori maximal. 
We recall that by the assumption $\kappa\in [0,\frac{p}{2}-1)$ we have $L^p(0,T;w_\kappa)\embed L^2(0,T)$ for all   $T<\infty$. Consequently, Assumption \ref{ass:quasilinear}~\eqref{I1}--\eqref{I2} and that local $(p,\kappa,q)$-solutions satisfy \eqref{eq:regularity_U_local} ensure the convergence of the deterministic and the sum of stochastic integrals in \eqref{eq:integrated_quasi} as $W^{-1,q}(\T^d)$-valued Bochner and $L^q(\T^d)$-valued It\^o integral, respectively. We remark also that due to space-time continuity on $[0,\sigma)\times \T^d$ resulting from \eqref{eq:regularity_U_local} together with the condition $2\frac{1+\kappa}{p}+\frac{d}{q}<1$ we refer to solutions as in Definition \ref{defi:sol} as \emph{regular}.
 
The following is our result on the global well-posedness of \eqref{eq:SPDE_quasi}.

\begin{theorem}[Global regular solutions to \eqref{eq:SPDE_quasi}]
\label{t:global_quasilinear}
Let Assumptions \ref{ass:quasilinear} and \ref{ass:quasilinear_data} be satisfied. Then there exists a unique global $(p,\kappa,q)$-solution $U$ to \eqref{eq:SPDE_quasi} 
satisfying 
\begin{align*}
U&\in L^{p}_{\loc}((0,\infty);W^{1,q} (\T^d)) \cap C((0,\infty);B^{1-\frac{2}{p}}_{q,p}(\T^d)) \ \text{ a.s.\ }
\end{align*}
Moreover, the solution
 regularizes instantaneously in time and space:
\begin{equation}
\label{eq:regularization_of_SPDE}
U\in C_{\loc}^{\vartheta,2\vartheta}((0,\infty)\times \T^d) \ \text{ a.s.\ for all }\vartheta <\tfrac{1}{2}.
\end{equation}
\end{theorem}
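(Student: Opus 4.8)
The plan is to combine the local existence theory for quasilinear SPDEs developed in \cite{AV19_QSEE1,AV19_QSEE2,AV21_SMR_torus} with the stochastic De Giorgi--Nash--Moser estimate (Theorem \ref{t:DeGiorgiNashMoser}) to exclude finite-time blow-up. First I would set up the functional-analytic framework in the $L^p(w_\kappa;L^q)$-setting: one views \eqref{eq:SPDE_quasi} as an abstract quasilinear stochastic evolution equation with leading operator $U\mapsto \nabla\cdot(A(\cdot,U)\cdot\nabla U)$ and transport noise $U\mapsto (B_n\cdot\nabla)U$. Assumption \ref{ass:quasilinear} \eqref{I1}--\eqref{I2} together with the parabolicity condition (iii) (which is exactly the stochastic parabolicity \eqref{eq:stoch_para_intro} adapted to the quasilinear case) guarantees that the linearized problem enjoys stochastic maximal $L^p$-regularity; the trace space is $B^{1-2\frac{1+\kappa}{p}}_{q,p}(\T^d)$, which by Assumption \ref{ass:quasilinear_data} and the embedding $B^{1-2\frac{1+\kappa}{p}}_{q,p}(\T^d)\hookrightarrow C^{\gamma_0}(\T^d)$ contains $U_0$. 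The abstract theory then yields a unique maximal local $(p,\kappa,q)$-solution $(U,\sigma)$ with the regularity \eqref{eq:regularity_U_local} on $[0,\sigma_j]$, and instantaneous parabolic smoothing $U\in C^{\vartheta,2\vartheta}_{\loc}((0,\sigma)\times\T^d)$ for $\vartheta<1/2$, via the usual bootstrap in the weighted scale (cf.\ \cite{ALV21}).

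The heart of the matter is the \emph{blow-up criterion}: from \cite{AV19_QSEE2} one knows that on $\{\sigma<\infty\}$ one has, almost surely,
\[
\limsup_{t\uparrow\sigma}\|U(t)\|_{B^{1-2\frac{1+\kappa}{p}}_{q,p}(\T^d)}=\infty
\quad\text{or}\quad
\|U\|_{L^p(0,\sigma,w_\kappa;W^{1,q}(\T^d))}=\infty,
\]
and in fact, since the nonlinearity $U\mapsto\nabla\cdot(A(\cdot,U)\cdot\nabla U)$ depends on $U$ only through $A(\cdot,U)$ in $L^\infty$ and through $\nabla U$ linearly, a sharper criterion of the form
\[
\{\sigma<\infty\}\subseteq\Bigl\{\sup_{t<\sigma}\|U(t)\|_{C^{\gamma_0}(\T^d)}=\infty\Bigr\}\cup\{\text{negligible}\}
\]
should be available (this is the standard "quasilinear-via-parabolic-regularity" mechanism, see \cite[Chapter 15]{TayPDE3} for the deterministic analogue). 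So it suffices to produce an a.s.\ \emph{a-priori bound} on a H\"older norm of $U$ on compact time intervals. This is where Theorem \ref{t:DeGiorgiNashMoser} enters: on any fixed time horizon $T<\infty$, freeze the coefficient by setting $a^{ij}(t,x):=A^{ij}(t,x,U(t,x))$ and $b^i_n(t,x):=B^i_n(t,x)$, so that on $[0,\sigma\wedge T)$ the solution $U$ is a weak solution of the \emph{linear} SPDE \eqref{eq:SPDE_lin} with these coefficients and zero inhomogeneities. Assumption \ref{ass:quasilinear} guarantees precisely \eqref{eq:ellip} (boundedness of $a$ from (ii) and of $b$ from (i), parabolicity from (iii)) and the $C^{3+\delta}$-regularity of $b$ required in Theorem \ref{t:DeGiorgiNashMoser}; note crucially that the De Giorgi--Nash--Moser constants depend only on $(d,M,\nu,p,q)$ and \emph{not} on any modulus of continuity of $a$, which is essential because $a=A(\cdot,U)$ is only as regular as $U$ itself. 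Applying Theorem \ref{t:DeGiorgiNashMoser} with $f=g=0$ and $\tau=\sigma\wedge T$ (extending by the stopping-time convention) gives stopping times $\tau_m\uparrow\sigma\wedge T$ with $\P(\tau_m=\sigma\wedge T)\to1$ and $U\in C^{\gamma_m}((0,\tau_m)\times\T^d)$ a.s. Combining this with Proposition \ref{prop:bddness} (which yields $U\in L^\infty([0,\sigma\wedge T]\times\T^d)$ a.s., again with no regularity assumption on $a$) one concludes that on the event $\{\sigma<T\}$ the solution stays bounded in $C^{\gamma_0}(\T^d)$ up to time $\sigma$, contradicting the blow-up criterion. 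Hence $\P(\sigma<T)=0$ for every $T$, so $\sigma=\infty$ a.s.

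The remaining assertions are then routine: the global regularity $U\in L^p_{\loc}((0,\infty);W^{1,q})\cap C((0,\infty);B^{1-2/p}_{q,p})$ follows by running the local theory on $[T_0,T_1]$ for $0<T_0<T_1<\infty$ with the (now H\"older-continuous, hence in the unweighted trace space) initial datum $U(T_0)$, which removes the weight; and \eqref{eq:regularization_of_SPDE} follows from the instantaneous parabolic smoothing of the local theory together with the already-established global existence. Uniqueness is inherited from the uniqueness of the maximal local solution.

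\textbf{Main obstacle.} The delicate point is the blow-up criterion: one must verify that the abstract criterion from \cite{AV19_QSEE2}, which is phrased in the trace norm $\|U(t)\|_{B^{1-2\frac{1+\kappa}{p}}_{q,p}}$ (and possibly the $L^p(w_\kappa;W^{1,q})$-norm), can be upgraded to control by a mere H\"older norm of $U$. This requires checking that the nonlinear map $U\mapsto\nabla\cdot(A(\cdot,U)\cdot\nabla U)$ satisfies the structural "$(p,\kappa,q)$-parabolic" hypotheses of \cite{AV19_QSEE1} in a way that makes the local existence time bounded below in terms of only $\|U(t_0)\|_{C^{\gamma_0}}$ (using the local Lipschitz bound on $A$ in Assumption \ref{ass:quasilinear}\eqref{I2} on the range $[-N,N]$ with $N\sim\|U\|_{L^\infty}$). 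Concretely one must ensure that the De Giorgi--Nash--Moser exponent $\gamma_m$, though possibly random, still lands in a H\"older scale on which the local solver can be restarted — this is exactly why one needs the interplay of Theorem \ref{t:DeGiorgiNashMoser} (H\"older regularity, any small exponent suffices to re-enter the trace space once the weight is dropped) with Proposition \ref{prop:bddness} ($L^\infty$-bound, independent of coefficient regularity). Everything else — maximal regularity of the linearization, the embedding chain for the trace space, the standard $\omega$-localization — is standard in the framework of \cite{AV19_QSEE1,AV19_QSEE2,AV21_SMR_torus,ALV21}.
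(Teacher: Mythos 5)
Your approach is essentially the same as the paper's: freeze $a := A(\cdot,U)$, view $U$ as a weak solution of the resulting linear SPDE \eqref{eq:linear_SPDE_quasi}, apply Theorem \ref{t:DeGiorgiNashMoser}, and combine with a H\"older-norm blow-up criterion (the paper makes this precise as Proposition \ref{prop:local_quasi}, which is exactly the upgrade you flag as the main obstacle). One slip: Theorem \ref{t:DeGiorgiNashMoser} gives $U\in C^{\gamma_m}$ with $\gamma_m\le\gamma_0/3$, not $C^{\gamma_0}$ as you write, and this is precisely why the blow-up criterion \eqref{eq:blow_up_criteria} must hold for every $\varepsilon\in(0,1)$ --- a point you correctly identify in your final paragraph.
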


The regularity in \eqref{eq:regularization_of_SPDE} is (almost) optimal for $A$ as in Assumption \ref{ass:quasilinear}. 
The regularity in time is clear. As for the spatial regularity, from \eqref{eq:SPDE_quasi}  (see \cite[Subsection 2.4]{AS24_thin_film} for a related situation), one sees that $\nabla U(t,x)$ is as smooth as $A(t,x,U(t,x))$. The latter is only bounded as $A(t,x,y)$ can depend roughly on $x$, and therefore one can expect $\nabla U$ to be at most bounded in space. The reader is referred to the companion paper \cite{ASV25_quasi} for the bootstrap of regularity in case of additional smoothness for $x\mapsto A(\cdot,x,\cdot)$.

We remark that by a standard localization argument in the probability space, one obtains from Theorem \ref{t:global_quasilinear} also global regular solutions to \eqref{eq:SPDE_quasi} if only 
$$
\P(U_0 \text{ is H\"older continuous})=1,$$
for a strongly $\F_0$-measurable  $U_0\colon \Omega \to C(\T^d)$.
Indeed, it is enough to apply the above result with $U_0$ replaced by $\one_{\O_n} U_0$, where $\O_n:=\{\|U_0\|_{C^{1/n}(\T^d)}\leq n\}$ for $n\geq 1$ arbitrary.
Finally, if the diffusion coefficient $A$ is more regular in $(x,y)$ than required in Assumption \ref{ass:quasilinear}~\eqref{I2}, the regularity assertion \eqref{eq:regularization_of_SPDE} can be further improved. For more details on the latter, we refer the reader to the forthcoming work  \cite{ASV25_quasi}, where we also include lower order nonlinearities in \eqref{eq:SPDE_quasi} and treat systems of SPDEs.

\subsection{Proof of Theorem \ref{t:global_quasilinear}}\label{ss:proof_global_ql}
As a first step to prove Theorem \ref{t:global_quasilinear}, we state a result on the local well-posedness and blow-up criteria for \eqref{eq:SPDE_quasi}. The emergence of H\"older norms in the blow-up condition \eqref{eq:blow_up_criteria} clarifies the link to the De Giorgi--Nash--Moser estimates of Theorem \ref{t:DeGiorgiNashMoser}. 

\begin{proposition}[Local well-posedness]
\label{prop:local_quasi}
Let Assumptions \ref{ass:quasilinear} and \ref{ass:quasilinear_data} be satisfied.  Then 
there exists a maximal $(p,\kappa,q)$-solution $(U,\sigma)$ to \eqref{eq:SPDE_quasi} satisfying a.s.\ $\sigma>0$ and
\begin{align}
U&\in L^{p}_{\loc}([0,\sigma);W^{1,q} (\T^d)) \cap C([0,\sigma);B^{1-2\frac{1+\kappa}{p}}_{q,p}(\T^d)), \\
\label{eq:instantaneous_regularization_local}
U&\in C_{\loc}^{\vartheta,2\vartheta}((0,\sigma)\times \T^d) \ \text{ for all }\vartheta <\tfrac{1}{2},
\end{align}
as well as
\begin{equation}
\label{eq:blow_up_criteria}
\P\Big(t<\sigma<T,\, \sup_{s\in [t,\sigma)}\|U(s)\|_{C^{\varepsilon}(\T^d)}<\infty\Big)=0,
\end{equation}
for all $0<t<T<\infty$ and $\varepsilon \in (0,1)$.
\end{proposition}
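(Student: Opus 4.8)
The plan is to obtain Proposition~\ref{prop:local_quasi} by combining the abstract local well-posedness theory for quasilinear SPDEs in the $L^p(w_\kappa;L^q)$-setting of \cite{AV19_QSEE1,AV19_QSEE2} with the stochastic maximal $L^p$-regularity on the torus from \cite{AV21_SMR_torus}, and then deriving the blow-up criterion \eqref{eq:blow_up_criteria} from the general ones in that theory together with a parabolic bootstrap. First I would recast \eqref{eq:SPDE_quasi} as an abstract stochastic evolution equation $\dd U + \mathcal{A}(U)U\,\dd t = (\mathcal{B}(t)U)\,\dd w$ with $\mathcal{A}(u)v = -\nabla\cdot(A(t,\cdot,u)\cdot\nabla v)$ acting $W^{1,q}\to W^{-1,q}$ and $\mathcal{B}_n(t)v = (B_n\cdot\nabla)v$; the stochastic parabolicity in Assumption~\ref{ass:quasilinear}(iii) is exactly the condition guaranteeing that $(\mathcal{A},\mathcal{B})$ has stochastic maximal $L^p$-regularity, and Assumption~\ref{ass:quasilinear}(i)--(ii) gives the Lipschitz dependence of the nonlinearity on $U$ in the relevant interpolation norm (here the critical space $B^{1-2\frac{1+\kappa}{p}}_{q,p}(\T^d)$, whose role as the trace space is recalled in the excerpt via \cite{ALV21}). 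Invoking the abstract theorem yields a unique maximal local solution $(U,\sigma)$ with $\sigma>0$ a.s.\ and the regularity $U\in L^p_{\loc}([0,\sigma);W^{1,q})\cap C([0,\sigma);B^{1-2\frac{1+\kappa}{p}}_{q,p})$.

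The instantaneous regularization \eqref{eq:instantaneous_regularization_local} then follows by a bootstrap: once $U$ is known on $[0,\sigma)$, one uses the parabolic smoothing of the abstract framework (time-weight juggling: starting from $t>0$ one may reset $\kappa$ to $0$ and raise $q$), together with the fact that $A(t,x,U(t,x))$ inherits the space-time Hölder regularity of $U$ through Assumption~\ref{ass:quasilinear}(ii), to iteratively improve integrability and then apply Sobolev/trace embeddings $L^p(w_\kappa;W^{1,q})\cap C(B^{1-2\frac{1+\kappa}{p}}_{q,p})\hookrightarrow C^{\vartheta,2\vartheta}$ whenever $2\frac{1+\kappa}{p}+\frac{d}{q}<1$ with $\vartheta<\tfrac12$. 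This is essentially the same bootstrap used to identify the trace space, so I would present it as an application of the corresponding result in \cite{AV19_QSEE2} rather than reprove it.

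For the blow-up criterion \eqref{eq:blow_up_criteria}, the general theory provides a criterion of Serrin type: on $\{\sigma<T\}$ one has $\limsup_{t\uparrow\sigma}\|U(t)\|_{X_{\mathrm{crit}}}=\infty$ in a critical norm, or a weaker integrated version. The task is to upgrade this to the statement that a uniform $C^\varepsilon(\T^d)$-bound on a terminal interval $[t,\sigma)$ prevents blow-up. The mechanism is that an $L^\infty_t C^\varepsilon_x$-bound on $U$ freezes the diffusion coefficient $A(\cdot,\cdot,U)$ into a \emph{bounded measurable} (indeed, via the $C^\varepsilon$ bound, slightly better) coefficient field, so that on $[t,\sigma)$ the equation becomes a \emph{linear} SPDE of the form \eqref{eq:SPDE_lin}; Proposition~\ref{prop:Existence_Uniqueness} and the deterministic-parabolic-type a priori estimates for such linear equations then give uniform control of $U$ in $L^p(t,\sigma;W^{1,q})\cap C([t,\sigma];B^{1-2/p}_{q,p})$, contradicting the critical blow-up. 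One feeds the $C^\varepsilon$ information back: a $C^\varepsilon$-bound on $U$ plus the linear estimates yields a $W^{1,q}$-bound for all large $q$, hence control in the critical space, hence $\sigma$ is not a blow-up time.

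The main obstacle I anticipate is precisely this last step: translating a low-regularity ($C^\varepsilon$) bound on $U$ into control in the critical trace space $B^{1-2\frac{1+\kappa}{p}}_{q,p}$ required by the abstract blow-up dichotomy. One must be careful that the constants in the linear a priori estimates (which on a bounded interval $[t,\sigma)$ depend on the ellipticity and on $\|A(\cdot,\cdot,U)\|_{L^\infty}$, and the latter only on $\sup_{[t,\sigma)}\|U\|_{C^0}$ via the local Lipschitz bound $C_N$) are genuinely uniform in $\sigma$ and do not degenerate as $\sigma\uparrow T$; this is where one needs the stopping-time/localization device (exactly as in the proofs of Theorems~\ref{t:DeGiorgiNashMoser}--\ref{t:DeGiorgiNashMoser_quantitative}) to keep everything on a fixed deterministic horizon $T$ and to handle the absence of moment bounds. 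Once the linear estimate is uniform, a standard continuation argument closes the gap: if $U$ stayed bounded in $C^\varepsilon$ up to $\sigma<T$, the solution could be restarted past $\sigma$, contradicting maximality.
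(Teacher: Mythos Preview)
Your treatment of local existence and instantaneous regularization matches the paper's approach: both simply invoke the abstract quasilinear theory of \cite{AV19_QSEE1,AV19_QSEE2} together with the stochastic maximal $L^p$-regularity from \cite{AV21_SMR_torus}, exactly as in \cite[Proposition~2.12]{AS24_thin_film}.

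For the blow-up criterion \eqref{eq:blow_up_criteria}, however, your proposed mechanism has a genuine gap. You want to conclude from a uniform $C^\varepsilon$-bound on $U$ that the frozen linear SPDE with coefficients $a(t,x)=A(t,x,U(t,x))$ satisfies a priori estimates in $L^p(t,\sigma;W^{1,q})\cap C([t,\sigma];B^{1-2/p}_{q,p})$. But under Assumption~\ref{ass:quasilinear}(ii) the matrix $A$ may be merely measurable in $x$, so $a$ is only bounded measurable regardless of how regular $U$ is. For such coefficients there is no $L^p(W^{1,q})$-theory for $q>2$ (beyond a Meyers-type $\varepsilon$-gain); Proposition~\ref{prop:Existence_Uniqueness} gives only $L^2(H^1)\cap C(L^2)$, and the De Giorgi--Nash--Moser results of Section~\ref{app:DeGiorgi_Nash_Moser} give H\"older continuity with a small exponent, not gradient integrability. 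Hence you cannot recover control in the fixed trace space $B^{1-2\frac{1+\kappa}{p}}_{q,p}$ this way, and the continuation argument does not close.

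The paper's route is different and avoids this obstacle entirely. The abstract theory already furnishes, for \emph{each} admissible triple $(p',\kappa',q')$, a blow-up criterion in the corresponding trace space $B^{1-2\frac{1+\kappa'}{p'}}_{q',p'}$. The instantaneous regularization \eqref{eq:instantaneous_regularization_local} guarantees that for any $t>0$ the solution $U(t)$ lies in every such trace space, so one may restart in a new setting $(p',\kappa',q')$; uniqueness then forces the maximal time $\sigma$ to be independent of the setting (this is the content of \cite[Corollary~2.14]{AS24_thin_film}, cf.\ \cite[Theorem~2.10]{RD_AV23}). Given $\varepsilon>0$, one now simply chooses $(p',\kappa',q')$ with $1-2\frac{1+\kappa'}{p'}<\varepsilon$ and $q'$ large, so that $C^\varepsilon(\T^d)\hookrightarrow B^{1-2\frac{1+\kappa'}{p'}}_{q',p'}(\T^d)$, and the abstract criterion in that setting yields \eqref{eq:blow_up_criteria} directly. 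In short: rather than upgrading a $C^\varepsilon$-bound to a fixed high-regularity norm via nonexistent linear estimates, one lowers the target norm by changing the functional setting, which the regularization makes legitimate.
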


As we will see in the proof of Theorem \ref{t:global_quasilinear}, the flexibility in taking 
$\varepsilon>0$ small plays a crucial role when applying our version of the De Giorgi--Nash--Moser estimates. This precisely counterbalances the possibility of the sequence of H\"older exponents $\g_m$ from Theorem \ref{t:DeGiorgiNashMoser} to degenerate.

Proposition \ref{prop:local_quasi} can be proved using the results on quasilinear stochastic evolution equations from \cite{AV19_QSEE1,AV19_QSEE2} in combination with \cite{AV21_SMR_torus}, in the same way as done by the first two authors in \cite[Proposition 2.12]{AS24_thin_film} for fourth-order quasilinear SPDEs. The independence of the blow-up criteria \eqref{eq:blow_up_criteria} of the choice of $\varepsilon>0$ follows as in \cite[Corollary 2.14]{AS24_thin_film} from the instantaneous regularization   \eqref{eq:instantaneous_regularization_local}, cf.  \cite[Theorem 2.10]{RD_AV23} for a similar situation. Details in a more general situation are also given in the companion paper \cite{ASV25_quasi}. 

Based on Proposition \ref{prop:local_quasi}, Theorem \ref{t:global_quasilinear} follows from our main result, Theorem \ref{t:DeGiorgiNashMoser}.

\begin{proof}[Proof of Theorem \ref{t:global_quasilinear}]
For $(U,\sigma)$ as in Proposition \ref{prop:local_quasi}, it suffices to show that $\sigma=\infty$ a.s.\ and, to this end, we set
$$
a^{ij}(t,\om,x):= \begin{cases}
	A^{ij}(t,\om,x,U(t,\om,x)), & t< \sigma, \\
	A^{ij}(t,\om,x,0 ), & t\ge\sigma,
	\\
\end{cases}
	\qquad  \text{on } [0,\infty )\times \O\times \T^d,
$$
where we a-priori do not have any control over the regularity of the coefficient $a$ for $t$ close to $\sigma$. Assumption \ref{ass:quasilinear} ensures, however, that $a$ is bounded and measurable, and that $a$ together with $b_n:=B_n$ is uniformly parabolic, implying therefore Assumption \ref{ass:DeGiorgi_Nash_Moser}~\eqref{it:am_DeGiorgi}. 

Given $T\in (0,\infty)$, there exists by Proposition \ref{prop:Existence_Uniqueness} a solution $u$ to the linear SPDE 
\begin{equation}\label{eq:linear_SPDE_quasi}
	\left\{
	\begin{aligned}
		&\dd u \, - \, \nabla \cdot (a\cdot \nabla u) \,\dd t \, = \,\textstyle{\sum}_{n\ge 1} (b_n\cdot \nabla) u \,\dd w^n_t, & & \text{ on }[0,T]\times \Tor^d,\\
		&u(0)\,=\,U_0,& & \text{ on }\Tor^d,
	\end{aligned}
	\right.
\end{equation}
and by uniqueness we must have a.s.\ $u=U$ on $[0,\sigma_j\wedge T]$ for all $j\ge 1$ and therefore on $[0,\sigma\wedge T)$ (here $\sigma_j$ are as in Definition \ref{defi:sol}).
By Assumption \ref{ass:quasilinear} and the equivalence of Assumption \ref{ass:quasilinear_data} and \eqref{Eq:equivalent_assumption} all conditions of Theorem \ref{t:DeGiorgiNashMoser} are fulfilled and we conclude that
\begin{equation}
	\label{eq:u_continuous_gamma_m}
	u\in C^{\gamma_m}((0,\tau_m)\times \T^d) \ \text{ a.s.,\ }
\end{equation}
for a sequence of positive numbers
$(\g_m)_{m\geq 1}$ and stopping times $(\tau_m)_{m\geq 1}$ with $\lim_{m\to \infty} \P(\tau_m=T)=1$. Using all this, we conclude that for any $t\in (0,T)$, we have 
\begin{align*}
\P(t<\sigma<T)
&= \lim_{m\to \infty}\P(t<\sigma<T,\, \tau_m=T)
\\
& = \lim_{m\to \infty}\,\P\bigl(t<\sigma<T,\, \tau_m=T , \, u\in C^{\gamma_m}((0,\tau_m)\times \T^d) ,\, u=U \text{ on } [0,\sigma\wedge T) \bigr)
\\&\le \liminf_{m\to\infty} \,
\P(t<\sigma<T , \, U\in C^{\gamma_m}((0,\sigma)\times \T^d)  ).
\end{align*}
Setting now $\epsilon = \gamma_m$ in the blow-up condition \eqref{eq:blow_up_criteria} we find that the right-hand side of the above equals $0$ and it remains to use that by $\sigma>0$ a.s.\
\[
\P ( \sigma<\infty ) = \P ( 0<\sigma<\infty ) = \lim_{r\to \infty} \P( 1/ r < \sigma <r ) = 0,
\]
to conclude the proof.
\end{proof}

\subsubsection*{Acknowledgments}
MS thanks the Mathematics Department Guido Castelnuovo at Sapienza University of Rome for the kind hospitality during his visit in May 2025, during which parts of this research were conducted. 

\def\polhk#1{\setbox0=\hbox{#1}{\ooalign{\hidewidth
  \lower1.5ex\hbox{`}\hidewidth\crcr\unhbox0}}} \def\cprime{$'$}


\begin{thebibliography}{10}

\bibitem{Agresti_meyer}
A.~Agresti.
\newblock On anomalous dissipation induced by transport noise.
\newblock {\em arXiv preprint arXiv:2405.03525}, 2025.
\newblock To appear in Mathematische Annalen.

\bibitem{ALV21}
A.~Agresti, N.~Lindemulder, and M.C. Veraar.
\newblock On the trace embedding and its applications to evolution equations.
\newblock {\em Mathematische Nachrichten}, 296(4):1319--1350, 2023.

\bibitem{AS24_thin_film}
A.~Agresti and M.~Sauerbrey.
\newblock Well-posedness of the stochastic thin-film equation with an interface
  potential.
\newblock {\em arXiv preprint arXiv:2403.12652}, 2024.

\bibitem{ASV25_quasi}
A.~Agresti, M.~Sauerbrey, and M.C. Veraar.
\newblock Quaslinear {SPDE}s with transport noise and superlinear diffusion.
\newblock In preparation, 2025.

\bibitem{AV19}
A.~Agresti and M.C. Veraar.
\newblock Stability properties of stochastic maximal {$L^p$}-regularity.
\newblock {\em J. Math. Anal. Appl.}, 482(2):123553, 35, 2020.

\bibitem{AV19_QSEE1}
A.~Agresti and M.C. Veraar.
\newblock Nonlinear parabolic stochastic evolution equations in critical spaces
  part {I}. {S}tochastic maximal regularity and local existence.
\newblock {\em Nonlinearity}, 35(8):4100--4210, 2022.

\bibitem{AV19_QSEE2}
A.~Agresti and M.C. Veraar.
\newblock Nonlinear parabolic stochastic evolution equations in critical spaces
  part {II}.
\newblock {\em J. Evol. Equ.}, 22(2):Paper No. 56, 2022.

\bibitem{RD_AV23}
A.~Agresti and M.C. Veraar.
\newblock Reaction-diffusion equations with transport noise and critical
  superlinear diffusion: {L}ocal well-posedness and positivity.
\newblock {\em Journal of Differential Equations}, 368:247--300, 2023.

\bibitem{AV21_SMR_torus}
A.~Agresti and M.C. Veraar.
\newblock Stochastic maximal ${L}^p({L}^q)$-regularity for second order systems
  with periodic boundary conditions.
\newblock {\em Ann. Inst. Henri Poincar\'{e} Probab. Stat.}, 60(1):413--430,
  2024.

\bibitem{AV25_survey}
A.~Agresti and M.C. Veraar.
\newblock Nonlinear {SPDE}s and maximal regularity: An extended survey.
\newblock {\em arXiv preprint arXiv:2501.18561}, 2025.

\bibitem{book_homgen}
S.~Armstrong, T.~Kuusi, and J.-C. Mourrat.
\newblock {\em Quantitative stochastic homogenization and large-scale
  regularity}, volume 352 of {\em Grundlehren der mathematischen
  Wissenschaften}.
\newblock Springer, Cham, 2019.

\bibitem{Auscher_Portal}
P.~Auscher and P.~Portal.
\newblock Stochastic and deterministic parabolic equations with bounded
  measurable coefficients in space and time: well-posedness and maximal
  regularity.
\newblock {\em J. Differential Equations}, 420:1--51, 2025.

\bibitem{bechtel_extrapolation_2025}
S.~Bechtel and M.C. Veraar.
\newblock An extrapolation result in the variational setting: improved
  regularity, compactness, and applications to quasilinear systems.
\newblock {\em Stochastics and Partial Differential Equations: Analysis and
  Computations}, 13(4):2000--2038, December 2025.

\bibitem{CF_porous}
L.A. Caffarelli and A.~Friedman.
\newblock Regularity of the free boundary of a gas flow in an {$n$}-dimensional
  porous medium.
\newblock {\em Indiana Univ. Math. J.}, 29(3):361--391, 1980.

\bibitem{chen_degenrate}
Y.Z. Chen.
\newblock H\"older estimates for solutions of uniformly degenerate quasilinear
  parabolic equations.
\newblock {\em Chinese Ann. Math. Ser. B}, 5(4):661--678, 1984.
\newblock A Chinese summary appears in Chinese Ann. Math. Ser. A {\bf 5}
  (1984), no. 5, 663.

\bibitem{CSZ19}
F.~Cornalba, T.~Shardlow, and J.~Zimmer.
\newblock A regularized {D}ean-{K}awasaki model: derivation and analysis.
\newblock {\em SIAM J. Math. Anal.}, 51(2):1137--1187, 2019.

\bibitem{Cox_vWinden}
S.~Cox and J.~van Winden.
\newblock Sharp supremum and {H}\"older bounds for stochastic integrals indexed
  by a parameter, 2024.

\bibitem{DaPratoTubaro}
G.~Da~Prato and L.~Tubaro.
\newblock Fully nonlinear stochastic partial differential equations.
\newblock {\em SIAM J. Math. Anal.}, 27(1):40--55, 1996.

\bibitem{DPZ}
G.~Da~Prato and J.~Zabczyk.
\newblock {\em Stochastic equations in infinite dimensions}, volume 152 of {\em
  Encyclopedia of Mathematics and its Applications}.
\newblock Cambridge University Press, Cambridge, second edition, 2014.

\bibitem{DKN_hitting}
R.C. Dalang, D.~Khoshnevisan, and E.~Nualart.
\newblock Hitting probabilities for systems of non-linear stochastic heat
  equations with additive noise.
\newblock {\em ALEA Lat. Am. J. Probab. Math. Stat.}, 3:231--271, 2007.

\bibitem{DG15_boundedness}
K.~Dareiotis and M.~Gerencs\'{e}r.
\newblock On the boundedness of solutions of {SPDE}s.
\newblock {\em Stoch. Partial Differ. Equ. Anal. Comput.}, 3(1):84--102, 2015.

\bibitem{DG17_continuity}
K.~Dareiotis and M.~Gerencs\'{e}r.
\newblock Local {$L_\infty$}-estimates, weak {H}arnack inequality, and
  stochastic continuity of solutions of {SPDE}s.
\newblock {\em J. Differential Equations}, 262(1):615--632, 2017.

\bibitem{DGG19}
K.~Dareiotis, M.~Gerencs\'er, and B.~Gess.
\newblock Entropy solutions for stochastic porous media equations.
\newblock {\em J. Differential Equations}, 266(6):3732--3763, 2019.

\bibitem{DG19_supremum_degenerate}
K.~Dareiotis and B.~Gess.
\newblock Supremum estimates for degenerate, quasilinear stochastic partial
  differential equations.
\newblock {\em Ann. Inst. Henri Poincar\'{e} Probab. Stat.}, 55(3):1765--1796,
  2019.

\bibitem{DG20}
K.~Dareiotis and B.~Gess.
\newblock Nonlinear diffusion equations with nonlinear gradient noise.
\newblock {\em Electron. J. Probab.}, 25:Paper No. 35, 43, 2020.

\bibitem{DeGiorgi_57}
E.~De~Giorgi.
\newblock Sulla differenziabilit\`a{} e l'analiticit\`a{} delle estremali degli
  integrali multipli regolari.
\newblock {\em Mem. Accad. Sci. Torino. Cl. Sci. Fis. Mat. Nat. (3)}, 3:25--43,
  1957.

\bibitem{DMH_regularity}
A.~Debussche, S.~de~Moor, and M.~Hofmanov\'a.
\newblock A regularity result for quasilinear stochastic partial differential
  equations of parabolic type.
\newblock {\em SIAM J. Math. Anal.}, 47(2):1590--1614, 2015.

\bibitem{DHV16}
A.~Debussche, M.~Hofmanov\'{a}, and J.~Vovelle.
\newblock Degenerate parabolic stochastic partial differential equations:
  quasilinear case.
\newblock {\em Ann. Probab.}, 44(3):1916--1955, 2016.

\bibitem{LMS}
L.~Denis, A.~Matoussi, and L.~Stoica.
\newblock {$L^p$} estimates for the uniform norm of solutions of quasilinear
  {SPDE}'s.
\newblock {\em Probab. Theory Related Fields}, 133(4):437--463, 2005.

\bibitem{dibendetto}
E.~Di~Benedetto.
\newblock {\em Degenerate parabolic equations}.
\newblock Universitext. Springer-Verlag, New York, 1993.

\bibitem{FG_ARMA}
B.~Fehrman and B.~Gess.
\newblock Well-posedness of the {D}ean-{K}awasaki and the nonlinear
  {D}awson-{W}atanabe equation with correlated noise.
\newblock {\em Arch. Ration. Mech. Anal.}, 248(2):Paper No. 20, 60, 2024.

\bibitem{F15_book}
F.~Flandoli.
\newblock {\em Random perturbation of {PDE}s and fluid dynamic models}, volume
  2015 of {\em Lecture Notes in Mathematics}.
\newblock Springer, Heidelberg, 2011.
\newblock Lectures from the 40th Probability Summer School held in Saint-Flour,
  2010, \'{E}cole d'\'{E}t\'{e} de Probabilit\'{e}s de Saint-Flour.

\bibitem{FGP10}
F.~Flandoli, M.~Gubinelli, and E.~Priola.
\newblock Well-posedness of the transport equation by stochastic perturbation.
\newblock {\em Invent. Math.}, 180(1):1--53, 2010.

\bibitem{GY_stabilization}
B.~Gess and I.~Yaroslavtsev.
\newblock Stabilization by transport noise and enhanced dissipation in the
  {K}raichnan model.
\newblock {\em J. Evol. Equ.}, 25(2):Paper No. 42, 63, 2025.

\bibitem{GT83}
D.~Gilbarg and N.S. Trudinger.
\newblock {\em Elliptic partial differential equations of second order}, volume
  224 of {\em Grundlehren der Mathematischen Wissenschaften}.
\newblock Springer-Verlag, Berlin, second edition, 1983.

\bibitem{Hartman_book}
P.~Hartman.
\newblock {\em Ordinary differential equations}, volume~38 of {\em Classics in
  Applied Mathematics}.
\newblock Society for Industrial and Applied Mathematics (SIAM), Philadelphia,
  PA, 2002.

\bibitem{hormander_I}
L.~H\"ormander.
\newblock {\em The analysis of linear partial differential operators. {I}}.
\newblock Classics in Mathematics. Springer-Verlag, Berlin, 2003.

\bibitem{HWW17}
E.P. Hsu, Y.~Wang, and Z.~Wang.
\newblock Stochastic {D}e {G}iorgi iteration and regularity of stochastic
  partial differential equations.
\newblock {\em Ann. Probab.}, 45(5):2855--2866, 2017.

\bibitem{Analysis2}
T.P. Hyt\"onen, J.M.A.M.~van Neerven, M.C. Veraar, and L.~Weis.
\newblock {\em Analysis in {B}anach spaces. {V}ol. {II}. {P}robabilistic
  {M}ethods and {O}perator {T}heory.}, volume~67 of {\em Ergebnisse der
  Mathematik und ihrer Grenzgebiete. 3. Folge.}
\newblock Springer, 2017.

\bibitem{K68}
R.H. Kraichnan.
\newblock Small-scale structure of a scalar field convected by turbulence.
\newblock {\em The Physics of Fluids}, 11(5):945--953, 1968.

\bibitem{K94}
R.H. Kraichnan.
\newblock Anomalous scaling of a randomly advected passive scalar.
\newblock {\em Physical review letters}, 72(7):1016, 1994.

\bibitem{KU_kolmogorov}
V.~Kr\"atschmer and M.~Urusov.
\newblock A {K}olmogorov-{C}hentsov type theorem on general metric spaces with
  applications to limit theorems for {B}anach-valued processes.
\newblock {\em J. Theoret. Probab.}, 36(3):1454--1486, 2023.

\bibitem{Kru64}
S.N. Kru\v{z}kov.
\newblock A priori bounds and some properties of solutions of elliptic and
  parabolic equations.
\newblock {\em Mat. Sb. (N.S.)}, 65(107):522--570, 1964.

\bibitem{KrylovIto}
N.V. Krylov.
\newblock On the {I}t\^{o}-{W}entzell formula for distribution-valued processes
  and related topics.
\newblock {\em Probab. Theory Related Fields}, 150(1-2):295--319, 2011.

\bibitem{KNP02}
S.B. Kuksin, N.S. Nadirashvili, and A.L. Pyatnitski\u{\i}.
\newblock H\"{o}lder norm estimates for solutions of stochastic partial
  differential equations.
\newblock {\em Teor. Veroyatnost. i Primenen.}, 47(1):152--159, 2002.

\bibitem{BW_Kunita}
H.~Kunita.
\newblock On backward stochastic differential equations.
\newblock {\em Stochastics}, 6(3-4):293--313, 1981/82.

\bibitem{Kunita}
H.~Kunita.
\newblock Stochastic differential equations and stochastic flows of
  diffeomorphisms.
\newblock In {\em \'{E}cole d'\'{e}t\'{e} de probabilit\'{e}s de
  {S}aint-{F}lour, {XII}---1982}, volume 1097 of {\em Lecture Notes in Math.},
  pages 143--303. Springer, Berlin, 1984.

\bibitem{kunita_book}
H.~Kunita.
\newblock {\em Stochastic flows and stochastic differential equations},
  volume~24 of {\em Cambridge Studies in Advanced Mathematics}.
\newblock Cambridge University Press, Cambridge, 1990.

\bibitem{kunita_jump_book}
H.~Kunita.
\newblock {\em Stochastic flows and jump-diffusions}, volume~92 of {\em
  Probability Theory and Stochastic Modelling}.
\newblock Springer, Singapore, 2019.

\bibitem{LSU}
O.A. Lady\v{z}enskaja, V.A. Solonnikov, and N.N. Ural{\cprime}tseva.
\newblock {\em Linear and quasilinear equations of parabolic type}.
\newblock Translated from the Russian by S. Smith. Translations of Mathematical
  Monographs, Vol. 23. American Mathematical Society, Providence, R.I., 1968.

\bibitem{Lenglart}
E.~Lenglart.
\newblock Relation de domination entre deux processus.
\newblock {\em Ann. Inst. H. Poincar\'e Sect. B (N.S.)}, 13(2):171--179, 1977.

\bibitem{LiuRock}
W.~Liu and M.~R{\"o}ckner.
\newblock {\em Stochastic partial differential equations: an introduction}.
\newblock Universitext. Springer, Cham, 2015.

\bibitem{moser_60}
J.~Moser.
\newblock A new proof of {D}e {G}iorgi's theorem concerning the regularity
  problem for elliptic differential equations.
\newblock {\em Comm. Pure Appl. Math.}, 13:457--468, 1960.

\bibitem{MoHarnack61}
J.~Moser.
\newblock On {H}arnack's theorem for elliptic differential equations.
\newblock {\em Comm. Pure Appl. Math.}, 14:577--591, 1961.

\bibitem{MoHarnack64}
J.~Moser.
\newblock A {H}arnack inequality for parabolic differential equations.
\newblock {\em Comm. Pure Appl. Math.}, 17:101--134, 1964.

\bibitem{nash_58}
J.~Nash.
\newblock Continuity of solutions of parabolic and elliptic equations.
\newblock {\em Amer. J. Math.}, 80:931--954, 1958.

\bibitem{MaximalLpregularity}
J.M.A.M.~van Neerven, M.C. Veraar, and L.W. Weis.
\newblock Stochastic maximal {$L^p$}-regularity.
\newblock {\em Ann. Probab.}, 40(2):788--812, 2012.

\bibitem{NVW13}
J.M.A.M.~van Neerven, M.C. Veraar, and L.W. Weis.
\newblock Stochastic integration in {B}anach spaces---a survey.
\newblock In {\em Stochastic analysis: a series of lectures}, volume~68 of {\em
  Progr. Probab.}, pages 297--332. Birkh\"{a}user/Springer, Basel, 2015.

\bibitem{RSZ24_MATH}
M.~R\"{o}ckner, S.~Shang, and T.~Zhang.
\newblock Well-posedness of stochastic partial differential equations with
  fully local monotone coefficients.
\newblock {\em Math. Ann.}, 390(3):3419--3469, 2024.

\bibitem{schmeisser1987topics}
H.J. Schmeisser and H.~Triebel.
\newblock {\em Topics in Fourier Analysis and Function Spaces}.
\newblock Wiley, 1987.

\bibitem{serrin_QL}
J.~Serrin.
\newblock Local behavior of solutions of quasi-linear equations.
\newblock {\em Acta Math.}, 111:247--302, 1964.

\bibitem{TayPDE3}
M.E. Taylor.
\newblock {\em Partial differential equations {III}. {N}onlinear equations},
  volume 117 of {\em Applied Mathematical Sciences}.
\newblock Springer, New York, second edition, 2011.

\bibitem{Tubaro}
L.~Tubaro.
\newblock Some results on stochastic partial differential equations by the
  stochastic characteristics method.
\newblock {\em Stochastic Anal. Appl.}, 6(2):217--230, 1988.

\bibitem{stoch_Fubini_Veraar}
M.C. Veraar.
\newblock The stochastic {F}ubini theorem revisited.
\newblock {\em Stochastics}, 84(4):543--551, 2012.

\bibitem{hypoelliptic_orig}
W.~Wang and L.~Zhang.
\newblock The {$C^\alpha$} regularity of a class of non-homogeneous
  ultraparabolic equations.
\newblock {\em Sci. China Ser. A}, 52(8):1589--1606, 2009.

\bibitem{W18_Harnack}
Z.~Wang.
\newblock A probabilistic {H}arnack inequality and strict positivity of
  stochastic partial differential equations.
\newblock {\em Probab. Theory Related Fields}, 171(3-4):653--684, 2018.

\end{thebibliography}
\end{document}